\documentclass[onefignum,onetabnum]{siamart220329}


\usepackage{lipsum}
\usepackage{amsfonts}
\usepackage{amssymb}
\usepackage{graphicx}
\usepackage{epstopdf}
\usepackage{algpseudocode}
\usepackage{tensor}
\usepackage{verbatim}
\usepackage{mathtools}
\usepackage{cleveref}
\usepackage{framed}
\usepackage{tikz}
\usetikzlibrary{calc, shapes, angles, quotes, patterns, decorations.pathreplacing, cd}
\usepackage{subcaption}
\usepackage{bm}
\usepackage{esint}
\usepackage{dsfont}

\ifpdf
\DeclareGraphicsExtensions{.eps,.pdf,.png,.jpg}
\else
\DeclareGraphicsExtensions{.eps}
\fi

\newlength{\leftstackrelawd}
\newlength{\leftstackrelbwd}
\def\leftstackrel#1#2{\settowidth{\leftstackrelawd}%
	{${{}^{#1}}$}\settowidth{\leftstackrelbwd}{$#2$}%
	\addtolength{\leftstackrelawd}{-\leftstackrelbwd}%
	\leavevmode\ifthenelse{\lengthtest{\leftstackrelawd>0pt}}%
	{\kern-.5\leftstackrelawd}{}\mathrel{\mathop{#2}\limits^{#1}}}


\newcommand{\bdd}[1]{ \boldsymbol{#1} }
\newcommand{\unitvec}[1]{\hat{\bdd{#1}}}
\newcommand{\vertiii}[1]{{\left\vert\kern-0.25ex\left\vert\kern-0.25ex\left\vert #1 
		\right\vert\kern-0.25ex\right\vert\kern-0.25ex\right\vert}}

\newsiamremark{remark}{Remark}
\newsiamremark{hypothesis}{Hypothesis}
\crefname{hypothesis}{Hypothesis}{Hypotheses}
\newsiamthm{claim}{Claim}

\headers{Statically Condensed Iterated Penalty Method}{M. Ainsworth and C. Parker}

\title{Statically Condensed Iterated Penalty Method for High Order Finite Element Discretizations of Incompressible Flow \thanks{Submitted to the editors DATE. \funding{The second author acknowledges that this material is based upon work supported by the National Science Foundation under Award No.
			DMS-2201487. }}}

\author{Mark Ainsworth\thanks{ Division of Applied Mathematics, Brown University, Providence, RI
		(\email{mark\_ainsworth@brown.edu}).} \and Charles Parker \thanks{ Mathematical Institute, University of Oxford, Andrew Wiles Building, Woodstock Road, Oxford OX2 6GG, UK  (\email{charles.parker@maths.ox.ac.uk})}  }

\usepackage{amsopn}

\DeclareMathOperator{\vcurl}{\mathbf{curl}}
\DeclareMathOperator{\grad}{\mathbf{grad}}
\DeclareMathOperator{\dive}{div}

\DeclareMathOperator{\supp}{supp}


\ifpdf
\hypersetup{
  pdftitle={SCIPforFEM},
  pdfauthor={M. Ainsworth and C. Parker}
}
\fi




\begin{document}

\maketitle

\begin{abstract}
	We introduce and analyze a Statically Condensed Iterated Penalty (SCIP) method for solving incompressible flow problems discretized with $p$th-order Scott-Vogelius elements. While the standard iterated penalty method is often the preferred algorithm for computing the discrete solution, it requires inverting a linear system with $\mathcal{O}(p^{d})$ unknowns at each iteration. The SCIP method reduces the size of this system to $\mathcal{O}(p^{d-1})$ unknowns while maintaining the geometric rate of convergence of the iterated penalty method. The application of SCIP to Kovasznay flow and Moffatt eddies shows good agreement with the theory.
\end{abstract}

\begin{keywords}
  high order finite element, incompressible flow, iterated penalty
\end{keywords}

\begin{AMS}
	76M10, 65N30, 65N12
\end{AMS}

\section{Introduction}
\label{sec:intro}

The search for stable mixed finite element pairs for the Stokes equations has a long and rich history and recently, attention has been focused on finite elements that satisfy exact sequence properties; see e.g. the review paper \cite{JohnLinkeMerdonNeilReb17} and references therein. Finite element spaces based on exact sequences are attractive in that they lead to schemes that exhibit ``pressure robustness" and result in approximations to the velocity that are pointwise divergence free. These schemes are often inf-sup stable with
respect to the mesh size and, in some cases, can be shown to be
\cite{AinCP19StokesI} uniformly stable with respect to the polynomial degree. Stability is crucial for avoiding nonphysical artifacts in the numerical solution, obtaining optimal a priori estimates, and constructing effective preconditioners.

A more classical approach to devising mixed finite element schemes,
particularly in the context of higher order methods, consists of using a combination of the form $\bdd{X}_D\times\dive\bdd{X}_D$ where the space $\bdd{X}_D$ consists of continuous piecewise polynomial vector fields.  Such schemes also form part of an exact sequence, but were not originally derived in this way~\cite{ScottVog84,ScottVog85,Vogelius83divinv}. While it is
known~\cite{ScottVog85,Vogelius83divinv} that these elements are inf-sup stable with respect to the mesh size provided that the space $\bdd{X}_D$ consists of fourth order polynomials or higher, the same
analysis~\cite{ScottVog85,Vogelius83divinv} suggested that the inf-sup constant may decay algebraically as the polynomial order is increased. However, practical experience suggests that the scheme is uniformly inf-sup stable in the polynomial degree; one by-product of the current work is \emph{a formal proof of the uniform inf-sup stability of the Scott-Vogelius elements in both the mesh size and the polynomial degree under certain necessary (but mild) assumptions on the mesh.} Despite providing the first proof of uniform stability, the main objective of the current work is quite different: we exhibit an algorithm that enables one to efficiently \emph{implement} the
Scott-Vogelius elements, particularly in the case of higher order elements.

One difficulty in applying the Scott-Vogelius elements is the difficulty of finding a basis for the discrete pressure space $\dive\bdd{X}_D$. In addition, as mentioned in \cref{sec:prelims}, the pressure space possesses non-trivial constraints at certain element vertices, which further exacerbates the problem. For these reasons, the method is often implemented using the Iterated Penalty
(IP) method \cite{Brenner08,FortinGlow83,Glowinski84,MorganScott18}. The IP approach circumvents the need to construct an explicit basis for the pressure space at the expense of proceeding iteratively which involves repeatedly having to solving finite element type problems involving only the velocity space $\bdd{X}_D$. This kind of approach is attractive in the context of lower order methods but, as remarked in \cref{sec:vanilla ip}, the standard Iterated Penalty approach becomes increasingly less attractive for higher order
elements owing to need to update the interior degrees of freedom on every iteration. 

It is worth noting that the interior degrees of freedom number
$\mathcal{O}(p^d)$ while the remaining degrees of freedom associated with element boundaries number $\mathcal{O}(p^{d-1})$. As such, the interior degrees of freedom account for the bulk of the degrees of freedom and having to update them at every iterate dominates the overall cost. To remedy this issue, we propose a Statically Condensed Iterated Penalty (SCIP) method which requires only the degrees of freedom on the element boundaries to be updated at each iteration; the result being that the cost per iteration of SCIP is drastically reduced compared with the standard iterated penalty method. 

Roughly speaking, the main idea behind the SCIP method consists of decomposing the discrete solution into contributions from a \emph{pair} of subspaces associated with element boundaries and from local pairs of subspaces associated with element interiors. Each of these contributions can be obtained by solving a Stokes-like equation posed over their respective subspace. The boundary contribution is first solved using the standard iterated penalty method, while
the interior contributions, for which bases may be readily constructed, are then solved via direct methods.  The net effect is that the SCIP method only requires a single solve for the interior degrees of freedom rather than having to update at every iteration using the IP method. 

In \cref{sec:modified it}, we provide theoretical bounds for the convergence of SCIP, and detail its implementation. In \cref{sec:numerics}, we present two numerical examples demonstrating SCIP's performance. \Cref{sec:stokes extension} introduces discrete extension operators that are then used to prove the convergence results of the SCIP method in \cref{sec:scip convergence}. Finally, \cref{sec:proof of optimal approx} contains the various properties of the Scott-Vogelius elements in 2D, including inf-sup stability, optimal approximation, and exact sequence properties.

\section{Mathematical Preliminaries}
\label{sec:prelims}

Let $\Omega \subset \mathbb{R}^d$, $d \in \{2,3\}$ be a polygonal domain whose boundary $\Gamma$ is partitioned into disjoint subsets $\Gamma_D$ and $\Gamma_N$ with $|\Gamma_D| > 0$. We consider the Stokes equations in  $\Omega$:
\begin{subequations}
	\label{eq:stokes continuous}
	\begin{alignat}{2}
	-\dive \bdd{\varepsilon}(\bdd{u}) + \grad q &= \bdd{f} \qquad && \text{in } \Omega, \\
	\label{eq:stokes div free}
	\dive \bdd{u} &= 0 \qquad & & \text{in } \Omega, \\
	\label{eq:le continuous dbc}
	\bdd{u} &= \bdd{0} \qquad && \text{on } \Gamma_D, \\
	\label{eq:le continuous nbc}
	-\bdd{\varepsilon}(\bdd{u})\cdot \unitvec{n} + q \unitvec{n} &= \bdd{g} \qquad &&\text{on } \Gamma_N,
	\end{alignat}
\end{subequations}
where $\bdd{u}$ and $q$ are the unknown fluid velocity  and pressure, $\bdd{\varepsilon}(\cdot)$ is the strain rate tensor, and $\bdd{f} \in \bdd{L}^2(\Omega)$ and $\bdd{g} \in \bdd{L}^2(\Gamma_N)$ are given data. 

Let $\bdd{H}_D^1(\Omega) :=  \{ \bdd{v} \in \bdd{H}^1(\Omega) : \bdd{v}|_{\Gamma_D} = \bdd{0} \}$ and $L^2_D(\Omega) = L^2(\Omega)$ if $|\Gamma_D| \neq |\Gamma|$ and $L^2_D(\Omega) = L^2_0(\Omega)$ otherwise. The variational form of \cref{eq:stokes continuous} is then: Find $(\bdd{u}, q) \in \bdd{H}^1_D(\Omega) \times L^2_D(\Omega)$ such that 
\begin{subequations}
\label{eq:stokes variational form}
\begin{alignat}{2}
 a( \bdd{u}, \bdd{v} ) - (q, \dive \bdd{v})  &= L(\bdd{v}) \qquad & &\forall \bdd{v} \in \bdd{H}^1_D(\Omega) , \\
 - (r, \dive \bdd{u}) &= 0 \qquad & & \forall r \in L^2_D(\Omega),
\end{alignat}
\end{subequations}
where
\begin{align}
a(\bdd{u}, \bdd{v}) := ( \bdd{\varepsilon}(\bdd{u}), \bdd{\varepsilon}(\bdd{v}) ) \quad \text{and} \quad
L(\bdd{v}) &:= (\bdd{v}, \bdd{f}) + (\bdd{v}, \bdd{g})_{\Gamma_N} \qquad \forall \bdd{u}, \bdd{v} \in \bdd{H}^1(\Omega)
\end{align}
and $(\cdot,\cdot)_{\omega}$ denotes the $L^2(\omega)$ or $\bdd{L}^2(\omega)$ inner product. More generally,	$|\cdot|_{s,\omega}$ and $\|\cdot\|_{s,\omega}$ denote the $H^s(\omega)$ or $\bdd{H}^s(\omega)$ semi-norm and norm, respectively.  We omit the subscript $\omega$ when $\omega = \Omega$. As a matter of fact, the ensuing discussion will be valid for the more general setting in which the bilinear form $a(\cdot,\cdot)$ satisfies the conditions:
\begin{itemize}
	\item Boundedness: There exists $M > 0$ such that
	\begin{align}
		\label{eq:a bounded}
		|a(\bdd{u}, \bdd{v})| \leq M \|\bdd{u}\|_1 \|\bdd{v}\|_{1} \qquad \forall \bdd{u}, \bdd{v} \in \bdd{H}^1_D(\Omega).
	\end{align}
	
	\item Ellipticity: There exists $\alpha > 0$ such that
	\begin{align}
		\label{eq:a elliptic}
		a(\bdd{u}, \bdd{u}) \geq \alpha \|\bdd{u}\|_1^2 \qquad \forall \bdd{u} \in \bdd{H}^1_D(\Omega). 
	\end{align}

\end{itemize}
In particular, these conditions ensure the well-posedness of \cref{eq:stokes variational form} by standard Babu\v{s}ka-Brezzi theory (see e.g. \cite[Lemma 3.19]{John16}).
Relevant examples of bilinear forms $a(\cdot,\cdot)$ satisfying \cref{eq:a bounded,eq:a elliptic} include
\begin{itemize}
	\item Oseen flow: 
	\begin{align}
		\label{eq:oseen flow}
		a(\bdd{u}, \bdd{v}) = 2\nu( \bdd{\varepsilon}(\bdd{u}), \bdd{\varepsilon}(\bdd{v}) ) + ((\bdd{w}\cdot \nabla) \bdd{u}, \bdd{v}),
	\end{align}
	where $\nu$ is the kinematic viscosity and $\bdd{w}$ is divergence free with $\bdd{w} \cdot \unitvec{n} \geq 0$ on $\Gamma_N$ (see e.g. \cite[\S 1]{John16} for a precise description of the required regularity of $\bdd{w}$).
	
	\item Singular perturbations to Oseen flow: $a(\bdd{u}, \bdd{v}) =  (\bdd{u}, \bdd{v}) + \delta \{ 2\nu ( \bdd{\varepsilon}(\bdd{u}), \bdd{\varepsilon}(\bdd{v}) ) + ((\bdd{w}\cdot \nabla) \bdd{u}, \bdd{v}) \}$, where $\nu$ and $\bdd{w}$ are as above.
\end{itemize}
The Oseen equations arise in numerical methods for the steady Navier-Stokes equations, while the singular perturbation problems arise in time discretizations of unsteady Stokes and Navier-Stokes flow (see e.g. \cite{John16}) in which $\delta \sim \Delta t$, where $\Delta t$ is the timestep.

\subsection{Scott-Vogelius Discretization}
\label{sec:scott vog}
Let $X \subset H^1(\Omega)$ be the set of continuous, piecewise polynomials of degree $p \in \mathbb{N}$ on a triangulation $\mathcal{T}$ of $\Omega$:
\begin{align*}
X := \{ v \in C^0(\bar{\Omega}) : v|_{K} \in \mathcal{P}_{p}(K) \ \forall K \in \mathcal{T} \},
\end{align*}
where $\mathcal{P}_{p}(K)$ denotes the space of polynomials of degree at most $p$. In particular, we assume that the triangulation $\mathcal{T}$ is a partitioning of the domain $\Omega$ into simplices such that the nonempty intersection of any two distinct elements from $\mathcal{T}$ is a single common sub-simplex of both elements with mesh size $h := \max_{K \in \mathcal{T}} h_K$ and $h_K := \mathrm{diam}(K)$. We also assume that element boundaries are located at the intersections of $\bar{\Gamma}_D$ and $\bar{\Gamma}_N$. The space $X_D := X \cap H^1_D(\Omega)$ then consists of functions in $X$ vanishing on the Dirichlet boundary $\Gamma_D$, and we discretize \cref{eq:stokes variational form} using the space $\bdd{X}_D := [X_D]^d$ as follows: Find $(\bdd{u}_{X},q_{X}) \in \bdd{X}_D \times \dive \bdd{X}_D$ such that
\begin{subequations}
\label{eq:stokes variational form fem}
 \begin{alignat}{2}
 \label{eq:stokes variational form fem 1}
 a( \bdd{u}_{X}, \bdd{v} ) - (q_{X}, \dive \bdd{v})  &= L(\bdd{v}) \qquad & &\forall \bdd{v} \in \bdd{X}_D , \\
 \label{eq:stokes variational form fem 2}
 - (r, \dive \bdd{u}_{X}) &= 0 \qquad & & \forall r \in \dive \bdd{X}_D.
 \end{alignat}
\end{subequations}

The pair $\bdd{X}_D \times \dive \bdd{X}_D$ corresponds to the Scott-Vogelius elements \cite{ScottVog84,ScottVog85,Vogelius83divinv} which possess properties that make them an attractive option for mixed high order discretization. Firstly, the velocity space consists of standard continuous finite elements, which are already implemented in most, if not all, high order finite element software packages. Secondly, choosing $r = \dive \bdd{u}_{X}$ in \cref{eq:stokes variational form fem 2} shows that the resulting discrete velocity $\bdd{u}_{X}$ is \textit{pointwise divergence free}, which means that \cref{eq:stokes div free} is satisfied exactly. Moreover, a discrete inf-sup condition holds:
\begin{align}
	\label{eq:discrete inf-sup}
	\beta_X := \inf_{0 \neq q \in \dive \bdd{X}_D} \sup_{ \bdd{v} \in \bdd{X}_D} \frac{(\dive \bdd{v}, q)}{\|\bdd{v}\|_{1} \|q\|},
\end{align}
where, in general, $\beta_X > 0$ depends on $h$ and $p$, but is strictly positive. This is most easily seen by the following argument. The divergence operator $\dive : \bdd{X}_D \to \dive \bdd{X}_D$ is continuous and surjective and $\bdd{X}_D$ is finite dimensional. Thus, the operator $\dive$ admits a bounded right-inverse $R : \dive \bdd{X}_D \to \bdd{X}_D$ with $\dive Rq = q$ for all $q \in \dive \bdd{X}_D$. Choosing $\bdd{v} = Rq$ in the supremum in \cref{eq:discrete inf-sup} gives
\begin{align*}
	\beta_X \geq \inf_{0 \neq q \in \dive \bdd{X}_D}  \frac{(\dive Rq, q)}{\|Rq\|_{1} \|q\|} = \inf_{0 \neq q \in \dive \bdd{X}_D}  \frac{\|q\|}{\|Rq\|_{1}} \geq \| R \|^{-1} > 0,
\end{align*}
where $\|R\|$ denotes the usual operator norm.

\section{Standard Iterated Penalty Method}
\label{sec:vanilla ip}

A classical implementation of the finite element method \cref{eq:stokes variational form fem} would proceed in two steps: (i) selecting a suitable basis for the spaces $\bdd{X}_D$ and $\dive \bdd{X}_D$ and (ii) solving the resulting saddle point system. The standard nature of the velocity space $\bdd{X}_D$ means that a basis may be constructed via the usual techniques. We use the Bernstein basis (see e.g. \cite{Lai07spline}) for the scalar space $X_D$ (other choices are perfectly acceptable). In the case $d=3$, the basis consists of (i) piecewise linear vertex functions, (ii) edge functions, (iii) face functions. and (iv) interior functions, while in the case $d=2$, the face functions play the role of interior degrees of freedom. In particular, there are $d+1$ vertex functions, $\binom{d+1}{2}(p-1)$ edge functions, $\binom{d+1}{3}(p-1)(p-2)/2$ face functions, and $(p-1)(p-2)(p-3)/6$ functions associated with a given element $K \in \mathcal{T}$. A basis for $\bdd{X}_D$ is obtained using functions of the form $\{\phi_j \unitvec{e}_k\}_{k=1}^{d}$, where $\{ \unitvec{e}_k\}_{k=1}^d$ is the standard basis for $\mathbb{R}^d$ and $\{\phi_j\}$ denotes the basis for $X_D$.

In contrast, constructing a basis for the pressure space $\dive \bdd{X}_D$ is far more complicated due, in part, to the large null space of the divergence operator. However, complications also arise from the fact \cite{AinCP21LE,ScottVog84,ScottVog85,Vogelius83divinv} that the dimension of the space $\dive \bdd{X}_D$ is affected by the element topology. For instance, in the case $d=2$ difficulties arise at \textit{singular vertices} \cite{ScottVog85,Vogelius83le}. An element vertex is singular if all element edges meeting at the vertex lie on exactly two straight lines. Thus, in the case of an interior vertex, a singular vertex can only arise when four elements abut the vertex. Together, these features mean that constructing a basis for the pressure space is a much more challenging task compared with constructing a basis for $\bdd{X}_D$. 

The iterated penalty method \cite{Brenner08,FortinGlow83,Glowinski84,MorganScott18} offers an attractive alternative to the classical implementation of \cref{eq:stokes variational form fem} by virtue of the fact that one can circumvent the need to construct an explicit basis for $\dive \bdd{X}_D$ altogether. The iterated penalty method proceeds as follows for a chosen sufficiently large parameter $\lambda > 0$ (see \cref{thm:vanilla ip convergence} below): For $n=0,1,\ldots,$ find $\bdd{u}_{X}^{n} \in \bdd{X}_D$ such that
\begin{subequations}
	\label{eq:vanilla ip}
	\begin{alignat}{2}
	\label{eq:vanilla ip 1}
	a_{\lambda}(\bdd{u}_{X}^{n}, \bdd{v}) &= L(\bdd{v}) + (\dive \bdd{w}_{X}^{n}, \dive \bdd{v}) \qquad & & \forall \bdd{v} \in \bdd{X}_D, \\
	\label{eq:vanilla ip 2}
	\bdd{w}_{X}^{n+1} &= \bdd{w}_{X}^{n} - \lambda \bdd{u}_{X}^{n}, \qquad & &
	\end{alignat}
\end{subequations}
where $\bdd{w}_{X}^{0} := \bdd{0}$ and  
\begin{align}
	\label{eq:alam def}
	a_{\lambda}(\bdd{u}, \bdd{v}) := a( \bdd{u}, \bdd{v} ) + \lambda (\dive \bdd{u}, \dive \bdd{v}) \qquad \forall \bdd{u}, \bdd{v} \in \bdd{H}^1(\Omega).
\end{align}
Note that $\bdd{u}_X^n$ is well-defined by \cref{eq:vanilla ip 1} thanks to the Lax-Milgram lemma since $a(\cdot,\cdot)$, and hence $a_{\lambda}(\cdot,\cdot)$, is elliptic on $\bdd{X}_D \subset \bdd{H}^1_D(\Omega)$. The steps \cref{eq:vanilla ip 1}-\cref{eq:vanilla ip 2} are iterated until a suitable stopping criterion (see \cref{thm:vanilla ip convergence} below for one such criterion) is met, at which point the pressure approximation is taken to be $q_X \simeq q_{X}^{n} := \dive \bdd{w}_{X}^{n}$. The following result concerns the convergence of \cref{eq:vanilla ip}.

\begin{theorem}
	\label{thm:vanilla ip convergence}
	Let $(\bdd{u}_{X}, q_{X}) \in \bdd{X}_D \times \dive \bdd{X}_D$ denote the solution to \cref{eq:stokes variational form fem} and $(\bdd{u}_{X}^{n},  \bdd{w}_{X}^{n})$, $n \in \mathbb{N}$ be given by \cref{eq:vanilla ip}. Then, the following error estimate holds:
	\begin{multline*}
		\max\left\{ \| \bdd{u}_{X} - \bdd{u}_{X}^{n} \|_{1}, \left( \frac{M(M+\alpha)}{\alpha \beta_X^2} + \frac{\sqrt{d} \lambda}{\beta_X} \right)^{-1} \|q_{X} - \dive \bdd{w}_{X}^{n} \| \right\} \\ 
		\leq \frac{M + \alpha}{\alpha \beta_X} \|\dive \bdd{u}_{X}^{n}\|,
	\end{multline*}
	where  $M > 0$ \cref{eq:a bounded}, $\alpha > 0$ \cref{eq:a elliptic}, and $\beta_X > 0$ \cref{eq:discrete inf-sup}.  Moreover,
	\begin{align*}
		\|\dive \bdd{u}_{X}^{n}\| \leq \sqrt{d} \left[ \frac{M(M + \alpha)^2}{\alpha^2 \beta_X^2 \lambda } \right]^n \| \bdd{u}_{X} - \bdd{u}^0 \|_{1}.
	\end{align*}
\end{theorem}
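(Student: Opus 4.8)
The plan is to work throughout with the error quantities $\bdd{e}^n := \bdd{u}_{X} - \bdd{u}_{X}^{n}$ and the pressure residual $\phi^n := q_{X} - \dive\bdd{w}_{X}^{n} \in \dive\bdd{X}_D$, and to first distill the iteration into a single clean error equation. Testing \cref{eq:stokes variational form fem 2} with $r = \dive\bdd{u}_{X}$ gives $\dive\bdd{u}_{X} = 0$, so subtracting \cref{eq:vanilla ip 1} from \cref{eq:stokes variational form fem 1} and using $\dive\bdd{u}_{X}^{n} = -\dive\bdd{e}^n$ yields the identity $a_{\lambda}(\bdd{e}^n,\bdd{v}) = (\phi^n,\dive\bdd{v})$ for all $\bdd{v}\in\bdd{X}_D$. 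Applying $\dive$ to the update \cref{eq:vanilla ip 2} gives $\phi^{n+1} = \phi^n - \lambda\dive\bdd{e}^n$. These two relations are the only structural facts the argument needs.

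For the first (a posteriori--type) estimate I would bound velocity and pressure separately. For the velocity, lift the divergence using the inf-sup stability \cref{eq:discrete inf-sup}: pick $\bdd{r}\in\bdd{X}_D$ with $\dive\bdd{r} = \dive\bdd{e}^n$ and $\|\bdd{r}\|_1 \le \beta_X^{-1}\|\dive\bdd{e}^n\|$, so that $\bdd{e}^n-\bdd{r}$ is divergence free. Testing the error identity with the divergence-free function $\bdd{e}^n-\bdd{r}$ annihilates both the $\lambda$-term and the right-hand side, giving $a(\bdd{e}^n,\bdd{e}^n-\bdd{r})=0$; ellipticity \cref{eq:a elliptic} and boundedness \cref{eq:a bounded} then give $\|\bdd{e}^n-\bdd{r}\|_1 \le (M/\alpha)\|\bdd{r}\|_1$, and a triangle inequality produces $\|\bdd{e}^n\|_1 \le \frac{M+\alpha}{\alpha\beta_X}\|\dive\bdd{e}^n\|$. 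For the pressure, apply inf-sup to $\phi^n$, replace $(\dive\bdd{v},\phi^n)$ by $a_{\lambda}(\bdd{e}^n,\bdd{v})$, estimate its two terms by $M\|\bdd{e}^n\|_1\|\bdd{v}\|_1$ and $\lambda\sqrt{d}\|\dive\bdd{e}^n\|\|\bdd{v}\|_1$ (using $\|\dive\bdd{v}\|\le\sqrt{d}\|\bdd{v}\|_1$), and insert the velocity bound; this gives $\|\phi^n\|\le\big(\tfrac{M(M+\alpha)}{\alpha\beta_X^2}+\tfrac{\sqrt{d}\lambda}{\beta_X}\big)\|\dive\bdd{e}^n\|$, which is exactly the stated pressure inequality, and in fact at least as strong since the displayed constant is $\ge 1$.

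The geometric estimate is where the real work lies, and the key is a cancellation of the penalty terms. Writing $(\dive\bdd{v},\phi^{n+1}) = (\dive\bdd{v},\phi^n) - \lambda(\dive\bdd{v},\dive\bdd{e}^n)$ and substituting $(\dive\bdd{v},\phi^n)=a_{\lambda}(\bdd{e}^n,\bdd{v}) = a(\bdd{e}^n,\bdd{v})+\lambda(\dive\bdd{e}^n,\dive\bdd{v})$, the $\lambda$-terms cancel and leave $(\dive\bdd{v},\phi^{n+1}) = a(\bdd{e}^n,\bdd{v})$. Inf-sup stability, boundedness, and the velocity bound above then give $\|\phi^{n+1}\|\le \frac{M(M+\alpha)}{\alpha\beta_X^2}\|\dive\bdd{e}^n\|$. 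Separately, testing the error identity at level $n+1$ with $\bdd{v}=\bdd{e}^{n+1}$ and discarding $a(\bdd{e}^{n+1},\bdd{e}^{n+1})\ge 0$ gives the penalty bound $\lambda\|\dive\bdd{e}^{n+1}\|^2 \le (\phi^{n+1},\dive\bdd{e}^{n+1})$, hence $\|\dive\bdd{e}^{n+1}\|\le\lambda^{-1}\|\phi^{n+1}\|$. Combining the two produces the one-step contraction $\|\dive\bdd{e}^{n+1}\|\le \frac{M(M+\alpha)}{\alpha\beta_X^2\lambda}\|\dive\bdd{e}^n\|$, which I would iterate from $n=0$ and finish with $\|\dive\bdd{e}^0\|=\|\dive\bdd{u}_{X}^{0}\|\le\sqrt{d}\|\bdd{u}_{X}-\bdd{u}_{X}^{0}\|_1$; since $\tfrac{M+\alpha}{\alpha}\ge 1$, the rate obtained is bounded by the stated $\rho = \frac{M(M+\alpha)^2}{\alpha^2\beta_X^2\lambda}$.

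I expect the main obstacle to be recognizing and exploiting this cancellation. On its own the penalty bound $\|\dive\bdd{e}^{n+1}\|\le\lambda^{-1}\|\phi^{n+1}\|$ only exhibits monotone decay, and one needs the identity $(\dive\bdd{v},\phi^{n+1})=a(\bdd{e}^n,\bdd{v})$ --- which strips every explicit $\lambda$ from the numerator --- to turn that bound into a genuine $\mathcal{O}(1/\lambda)$ contraction, thereby both pinning down the ``sufficiently large $\lambda$'' threshold and supplying the geometric rate. A secondary point meriting care is that $a(\cdot,\cdot)$ is not assumed symmetric (e.g.\ Oseen flow), so I would avoid any self-adjoint or spectral argument for the iteration operator and instead reason directly through the inf-sup, coercivity, and boundedness inequalities, none of which invoke symmetry.
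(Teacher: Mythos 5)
Your proof is correct, and it rests on the same pillars as the paper's: the error equation $a_{\lambda}(\bdd{e}^n,\bdd{v})=(\phi^n,\dive\bdd{v})$ for all $\bdd{v}\in\bdd{X}_D$, the update relation $\phi^{n+1}=\phi^n-\lambda\dive\bdd{e}^n$, the resulting $\lambda$-cancellation identity, and the inf-sup condition \cref{eq:discrete inf-sup} for the pressure. Two differences are worth recording. First, the paper does not prove the velocity bound $\|\bdd{e}^n\|_1\le\frac{M+\alpha}{\alpha\beta_X}\|\dive\bdd{e}^n\|$ from scratch: it invokes \cref{lem:app:orthog divefree}, quoted from Brenner--Scott, for functions $a$-orthogonal to discretely divergence-free fields; your inf-sup lifting plus triangle-inequality argument is exactly the standard proof of that lemma, so here you are simply self-contained where the paper cites. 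Second, and more substantively, the one-step contraction is run on different quantities. The paper (adapting its proof of \cref{thm:boundary ip convergence}, with $\mathbb{S}^{\dagger}\bdd{e}^{n+1}$ replaced by $\bdd{e}^{n+1}$) tests the cancellation identity with $\bdd{e}^{n+1}$, keeps the coercive term, converts $\lambda\|\dive\bdd{e}^{n+1}\|^2$ into $\lambda\Upsilon^{-2}\|\bdd{e}^{n+1}\|_1^2$ via the orthogonality lemma, and thus contracts $\|\bdd{e}^n\|_1$ with ratio $M\Upsilon^2/\lambda=M(M+\alpha)^2/(\alpha^2\beta_X^2\lambda)$, where $\Upsilon=(M+\alpha)/(\alpha\beta_X)$. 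You instead discard the coercive term to get $\|\dive\bdd{e}^{n+1}\|\le\lambda^{-1}\|\phi^{n+1}\|$ and control $\|\phi^{n+1}\|$ by inf-sup applied to $(\dive\bdd{v},\phi^{n+1})=a(\bdd{e}^n,\bdd{v})$, contracting $\|\dive\bdd{e}^n\|$ with ratio $M(M+\alpha)/(\alpha\beta_X^2\lambda)$ --- sharper by a factor of $\alpha/(M+\alpha)\le 1$, so the stated rate follows. One small loose end, shared with the paper's own packaging: placing the pressure bound under the stated maximum uses $\Upsilon\ge 1$, which you assert but do not justify; it does hold, since $\alpha\le M$ (compare \cref{eq:a bounded,eq:a elliptic}) and $\beta_X\le\sqrt{d}<2$ by Cauchy--Schwarz, but it deserves a line. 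Finally, your care in avoiding symmetry of $a(\cdot,\cdot)$ is well placed and matches the paper, which likewise argues only through boundedness, ellipticity, and inf-sup.
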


\noindent \Cref{thm:vanilla ip convergence} is proved in \cref{sec:vanilla ip convergence} and shows that, for $\lambda$ sufficiently large, the standard iterated penalty method \cref{eq:vanilla ip} converges at a geometric rate and that the quantity $\|\dive \bdd{u}_{X}^{n}\|$ may be used as the basis for a stopping criterion. 

\subsection{Implementation Cost}

The main cost of using the standard iterated penalty method lies in \cref{eq:vanilla ip 1} which entails solving a square system with $\mathcal{O}(|\mathcal{T}|p^d)$ unknowns at every iteration. The bulk of the degrees of freedom are associated with the interior basis functions which, as remarked earlier, number $\mathcal{O}(p^d)$ per element. In contrast, the number of degrees of freedom associated with element boundaries is $\mathcal{O}(|\mathcal{T}|p^{d-1})$. The question arises: Can system \cref{eq:vanilla ip 1} be reduced to a system of size $\mathcal{O}(|\mathcal{T}|p^{d-1})$ unknowns by an (ideally) one-time elimination, or \textit{static condensation}, of the interior degrees of freedom? 

In order to explore this question, it is convenient to express static condensation in variational form. Given an element $K \in \mathcal{T}$, let
\begin{align}
\label{eq:interior space def}
\bdd{X}_I(K) := \{ \bdd{v} \in \bdd{X}_D : \supp \bdd{v} \subseteq K  \} \quad \text{and} \quad \bdd{X}_I := \bigoplus_{K \in \mathcal{T}} \bdd{X}_I(K).
\end{align}
The orthogonal complement of $\bdd{X}_I$ in $\bdd{X}_D$ with respect to the  $a_{\lambda}(\cdot,\cdot)$ form and its ``adjoint" are given by
\begin{align}
	\label{eq:old boundary left}
\bdd{X}_B &:= \{ \bdd{v} \in \bdd{X}_D : a_{\lambda,K}(\bdd{v}, \bdd{w}) = 0 \ \forall \bdd{w} \in \bdd{X}_I(K), \ \forall K \in \mathcal{T} \} \\
\label{eq:old boundary right}
\bdd{X}_B^{\dagger} &:= \{ \bdd{v} \in \bdd{X}_D : a_{\lambda, K}(\bdd{w}, \bdd{v}) = 0 \ \forall \bdd{w} \in \bdd{X}_I(K), \ \forall K \in \mathcal{T} \},
\end{align}
where $a_{\lambda,K}(\cdot,\cdot)$ is the restriction of $a_{\lambda}(\cdot,\cdot)$ to the element $K$. Static condensation then amounts to seeking the solution to \cref{eq:vanilla ip 1} in the form
\begin{align}
\label{eq:vanilla sc decomp}
\bdd{u}_{X}^{n} = \bdd{u}_B + \sum_{K \in \mathcal{T}} \bdd{u}_K,
\end{align}
in which the contributions are given by
\begin{alignat}{3}
	\label{eq:vanilla ip interior}
	\bdd{u}_K \in \bdd{X}_I(K) &: \ \ & a_{\lambda,K}(\bdd{u}_K, \bdd{v}) &= (\bdd{f}, \bdd{v})_K + (\dive \bdd{w}_{X}^{n}, \dive \bdd{v})_K \quad & &\forall \bdd{v} \in \bdd{X}_I(K), \\
\label{eq:vanilla ip hat}
\bdd{u}_B \in \bdd{X}_B &:  & a_{\lambda}(\bdd{u}_B, \bdd{v}) &= L(\bdd{v}) + (\dive \bdd{w}_{X}^{n}, \dive \bdd{v}) \quad & &\forall \bdd{v} \in \bdd{X}_B^{\dagger}.
\end{alignat} 
The systems \cref{eq:vanilla ip interior} consist of  $\mathcal{O}(p^d)$ interior unknowns on each element that are decoupled and can be solved in parallel using direct methods compared with the global system of $\mathcal{O}(|\mathcal{T}| p^d)$ unknowns corresponding to \cref{eq:vanilla ip 1}. Meanwhile \cref{eq:vanilla ip hat} is equivalent to a global linear system of $\mathcal{O}(|\mathcal{T}|p^{d-1})$ unknowns. \Cref{alg:itpen vanilla} summarizes the standard iterated penalty method in which the solution to \cref{eq:vanilla ip 1} is sought in the form \cref{eq:vanilla sc decomp}. Unfortunately, the computational cost of \cref{alg:itpen vanilla} per iteration remains $\mathcal{O}(|\mathcal{T}| p^{2d})$ operations owing to the need to solve \cref{eq:vanilla ip interior} at every iteration.

\begin{algorithm}[htb]
	\caption{Standard Iterated Penalty Method for \cref{eq:stokes variational form fem}}
	\label{alg:itpen vanilla}
	\begin{algorithmic}[1]
		\Require{ $\bdd{w}_{X}^{0} := \bdd{0}$, $\lambda > 0$}

		\For{$n = 0, 1, \ldots, $}
			\State{Find $\bdd{u}_B \in \bdd{X}_B$ such that
			\begin{align*}
			a_{\lambda}(\bdd{u}_B, \bdd{v}) = L(\bdd{v}) + (\dive \bdd{w}_{X}^{n}, \dive \bdd{v}) \qquad \forall \bdd{v} \in \bdd{X}_B^{\dagger}.
			\end{align*}
			}
			
			\State{For each $K \in \mathcal{T}$, find $\bdd{u}_K \in \bdd{X}_I(K)$ such that
			\begin{align*}
			a_{\lambda,K}(\bdd{u}_K, \bdd{v}) = (\bdd{f}, \bdd{v})_K + (\dive \bdd{w}_{X}^{n}, \dive \bdd{v})_K \qquad \forall \bdd{v} \in \bdd{X}_I(K).
			\end{align*}
			}
			
			\State{$\bdd{u}_{X}^{n} := \bdd{u}_B + \sum_{K \in \mathcal{T}} \bdd{u}_K$}
			
			\If{stopping criteria is met}
			\State{\textbf{break}}
			\EndIf
			
			\State{$\bdd{w}_{X}^{n+1} := \bdd{w}_{X}^{n} - \lambda \bdd{u}_{X}^{n}$}
		\EndFor
		
		\State{\Return{ $\bdd{u}_{X}^{n}$, $q_{X}^{n} := \dive \bdd{w}_{X}^{n}$ }}

	\end{algorithmic}
\end{algorithm}

\section{Reducing the Cost of the Standard Iterated Penalty Method}
\label{sec:modified it}

The foregoing discussion showed that, even with element-wise static condensation, the cost of the standard iterated penalty method remains at $\mathcal{O}(|\mathcal{T}| p^{2d})$ operations per iteration. The main reason why the static condensation failed to reduce the cost per iteration was that lines 4 and 8 in \cref{alg:itpen vanilla} required the values of the interior degrees of freedom at every iteration in order to compute the RHS needed in line 2 for the boundary degrees of freedom. In essence, while static condensation decouples the LHS of the system appearing in \cref{eq:vanilla ip interior,eq:vanilla ip hat}, the problem remains coupled owing to the form of the source terms on the RHS. 

In this section, we show that a judicious modification of the choice the space $\bdd{X}_B$ (and $\bdd{X}_B^{\dagger}$) results in a \textit{full} decoupling of the interior and boundary degrees of freedom. This means that one need only solve for the interior degrees of freedom \textit{once}, as opposed to having to solve for the interiors at every iteration as in \cref{alg:itpen vanilla}. The main idea rests on using properties of the spaces of divergence free interior functions 
\begin{align}
	\label{eq:ni def}
	\bdd{N}_I(K) &:= \{ \bdd{v} \in \bdd{X}_I(K) : \dive \bdd{v} \equiv 0  \}, \ K \in \mathcal{T},  \quad \text{and} \quad \bdd{N}_I := \bigoplus_{K \in \mathcal{T}} \bdd{N}_I(K),
\end{align} 
which will play a key role in analyzing the interior spaces and constructing the appropriate modification to $\bdd{X}_B$.

\subsection{The Interior Spaces}

We start by examining the \textit{Stokes system} associated with the interior degrees of freedom on an element $K \in \mathcal{T}$: Find $(\bdd{u}_K, q_K) \in \bdd{X}_I(K) \times \dive \bdd{X}_I(K)$ such that
\begin{subequations}
	\label{eq:interior stokes discussion}
	\begin{alignat}{2}
		\label{eq:interior stokes discussion 1}
		a_K(\bdd{u}_K, \bdd{v}) - (q_K, \dive \bdd{v})_K &= L_1(\bdd{v}) \qquad & &\forall \bdd{v} \in \bdd{X}_I(K), \\
		\label{eq:interior stokes discussion 2}
		-(r, \dive \bdd{u}_K)_K &= L_2(r)	\qquad & &\forall r \in \dive \bdd{X}_I(K),
	\end{alignat}
\end{subequations}
where $L_1(\cdot)$ and $L_2(\cdot)$ are suitable linear functionals. Problem \cref{eq:interior stokes discussion} may be written in terms of matrices as follows. Any $\bdd{u} \in \bdd{X}_D$ and $q_K \in \dive \bdd{X}_I(K)$, $K \in \mathcal{T}$, may be expressed as
\begin{align*}
	\bdd{u} = \vec{u}_B^T \vec{\Phi}_B + \vec{u}_I^T \vec{\Phi}_I \quad \text{and} \quad q_K = \vec{q}_{K}^T \vec{\psi}_{\iota,K},
\end{align*}
where $\vec{\Phi}_I$ is a basis for the interior velocity functions, $\vec{\Phi}_B$ a basis for the vertex and edge functions, while $\vec{\psi}_{\iota,K}$ is a basis for $\dive \bdd{X}_I(K)$. For $K \in \mathcal{T}$, let $\bdd{E}_K$ be the matrix corresponding to the form $a_K(\cdot,\cdot)$, partitioned as follows:
\begin{align*}
	a_K(\bdd{u}, \bdd{v}) = \begin{bmatrix}
		\vec{v}_{B,K} \\ \vec{v}_{I,K}
	\end{bmatrix}^T \bdd{E}_K \begin{bmatrix}
		\vec{u}_{B,K} \\ \vec{u}_{I,K}
	\end{bmatrix} = \begin{bmatrix}
		\vec{v}_{B,K} \\ \vec{v}_{I,K}
	\end{bmatrix}^T \begin{bmatrix}
		\bdd{E}_{BB} & \bdd{E}_{BI} \\
		\bdd{E}_{IB} & \bdd{E}_{II}
	\end{bmatrix} \begin{bmatrix}
		\vec{u}_{B,K} \\ \vec{u}_{I,K}
	\end{bmatrix} \qquad \forall \bdd{u}, \bdd{v} \in \bdd{X}_D,
\end{align*}
where $\vec{u}_{B,K}$ and $\vec{u}_{I,K}$ are the boundary and interior degrees of freedom of $\bdd{u}$ associated to element $K$. In a similar vein, $\bdd{G}_K$ is the matrix corresponding to $-(\cdot, \dive \cdot)_K$:
\begin{align*}
	-(q_K, \dive \bdd{u})_K = \vec{q}_{K}^T \bdd{G}_K \begin{bmatrix}
		\vec{u}_{B,K} \\ \vec{u}_{I,K}
	\end{bmatrix} = \vec{q}_{K}^T \begin{bmatrix}
		\bdd{G}_{\iota B} &
		\bdd{G}_{\iota I}
	\end{bmatrix} \begin{bmatrix}
		\vec{u}_{B,K} \\ \vec{u}_{I,K}
	\end{bmatrix},
\end{align*}
for all $q_K \in \dive \bdd{X}_I(K)$ and $\bdd{u} \in \bdd{X}_D$. In particular, the LHS of \cref{eq:interior stokes discussion} corresponds to a square matrix 
\begin{align}
	\label{eq:interior stokes matrix}
	\begin{bmatrix}
		\bdd{E}_{II} & \bdd{G}_{\iota I}^T \\
		\bdd{G}_{\iota I} & \bdd{0} 
	\end{bmatrix}.
\end{align}
The first result concerns existence and uniqueness of solutions to \cref{eq:interior stokes discussion}:
\begin{lemma}
	\label{lem:interior inversion}
	The interior Stokes system \cref{eq:interior stokes discussion} is uniquely solvable.
\end{lemma}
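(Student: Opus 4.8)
The plan is to exploit the square structure of the system. Since both the unknowns $(\bdd{u}_K, q_K)$ and the test functions range over the same pair of spaces $\bdd{X}_I(K) \times \dive \bdd{X}_I(K)$, the matrix \cref{eq:interior stokes matrix} is square; consequently it suffices to show that the homogeneous problem (with $L_1 \equiv 0$ and $L_2 \equiv 0$) admits only the trivial solution. Everything then reduces to a short kernel computation, and I do not expect to need the full Babu\v{s}ka--Brezzi machinery.

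First I would record two elementary facts. The bilinear form $a_K(\cdot,\cdot)$ is elliptic on $\bdd{X}_I(K)$, inherited directly from the global ellipticity \cref{eq:a elliptic}: every $\bdd{v} \in \bdd{X}_I(K)$ is, after extension by zero, an element of $\bdd{H}^1_D(\Omega)$ supported in $\bar{K}$, so that $a_K(\bdd{v},\bdd{v}) = a(\bdd{v},\bdd{v}) \geq \alpha \|\bdd{v}\|_1^2 = \alpha \|\bdd{v}\|_{1,K}^2$. Moreover, by the very definition of $\dive \bdd{X}_I(K)$ as the image of $\bdd{X}_I(K)$ under the divergence, the map $\dive : \bdd{X}_I(K) \to \dive \bdd{X}_I(K)$ is surjective; this surjectivity is the only inf-sup-type ingredient required.

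With these in hand, let $(\bdd{u}_K, q_K)$ solve the homogeneous system. Testing \cref{eq:interior stokes discussion 2} with the admissible choice $r = \dive \bdd{u}_K \in \dive \bdd{X}_I(K)$ gives $(\dive \bdd{u}_K, \dive \bdd{u}_K)_K = 0$, whence $\bdd{u}_K \in \bdd{N}_I(K)$ \cref{eq:ni def}; in particular $\dive \bdd{u}_K \equiv 0$. Substituting $\bdd{v} = \bdd{u}_K$ into the homogeneous \cref{eq:interior stokes discussion 1} then annihilates the pressure coupling term, leaving $a_K(\bdd{u}_K, \bdd{u}_K) = 0$, and ellipticity forces $\bdd{u}_K = \bdd{0}$. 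The momentum equation thereby collapses to $(q_K, \dive \bdd{v})_K = 0$ for all $\bdd{v} \in \bdd{X}_I(K)$; choosing, by surjectivity, a $\bdd{v}$ with $\dive \bdd{v} = q_K$ yields $(q_K, q_K)_K = 0$, so $q_K = 0$ as well.

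The kernel of \cref{eq:interior stokes matrix} is therefore trivial, and since the matrix is square it is invertible, giving unique solvability of \cref{eq:interior stokes discussion}. I expect the only point requiring any care to be the ellipticity transfer in the second step --- specifically, observing that the zero-extension of an interior function lies in $\bdd{H}^1_D(\Omega)$ with support confined to $\bar{K}$, so that restricting the global bound \cref{eq:a elliptic} to $K$ is legitimate; the remainder is routine. I would also remark that the non-symmetry of $a_K$ (as in the Oseen case \cref{eq:oseen flow}) causes no difficulty, since the argument only ever tests the forms against the solution itself.
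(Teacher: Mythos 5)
Your proposal is correct and follows essentially the same argument as the paper: reduce to uniqueness for the square system \cref{eq:interior stokes matrix}, deduce $\bdd{u}_K \in \bdd{N}_I(K)$ from \cref{eq:interior stokes discussion 2}, apply ellipticity \cref{eq:a elliptic} with the test choice $\bdd{v} = \bdd{u}_K$ to kill the velocity, and then use surjectivity of $\dive : \bdd{X}_I(K) \to \dive \bdd{X}_I(K)$ to kill the pressure. The only difference is that you spell out details the paper leaves implicit (the explicit test function $r = \dive \bdd{u}_K$ and the transfer of ellipticity to $\bdd{X}_I(K)$, which is immediate since $\bdd{X}_I(K) \subset \bdd{H}^1_D(\Omega)$ by definition).
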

\begin{proof}
As shown above, \cref{eq:interior stokes discussion} is equivalent to a square linear system involving the matrix \cref{eq:interior stokes matrix}, and therefore it suffices to show uniqueness. Suppose that $(\bdd{u}_K, q_K) \in \bdd{X}_I(K) \times \dive \bdd{X}_I(K)$ satisfy \cref{eq:interior stokes discussion} with $L_1 = L_2 = 0$. Thanks to \cref{eq:interior stokes discussion 2}, $\bdd{u}_K \in \bdd{N}_I(K)$. Choosing $\bdd{v} = \bdd{u}_K$ in \cref{eq:interior stokes discussion 1} then gives $a(\bdd{u}_K, \bdd{u}_K) = 0$. Since $a(\cdot,\cdot)$ is elliptic on $\bdd{N}_I(K)$ by \cref{eq:a elliptic}, $\bdd{u}_K \equiv 0$. By the definition of $\dive \bdd{X}_I(K)$, there exists $\bdd{w} \in \bdd{X}_I(K)$ such that $\dive \bdd{w} = q_K$, and so \cref{eq:interior stokes discussion 1} gives $0 =(q_K, \dive \bdd{w}) = (q_K, q_K)$. Thus, $q_K \equiv 0$, which completes the proof.
\end{proof}

\subsection{The Boundary Space}
\label{sec:mip subspace decomp}

Previously, in \cref{eq:old boundary left,eq:old boundary right}, the boundary space $\bdd{X}_B$ was chosen to be the orthogonal complement with respect to the form $a_{\lambda}(\cdot,\cdot)$ defined in \cref{eq:alam def}. However, the presence of the term $(\dive \cdot, \dive \cdot)$ in the data in lines 2-3 of \cref{alg:itpen vanilla} was ultimately responsible for the need to recompute the static condensation at each iteration. In order to avoid this dependency on the data, we construct new spaces $\tilde{\bdd{X}}_B$ and $\tilde{\bdd{X}}_B^{\dagger}$ that explicitly decouple the dependency in the data arising from the $(\dive \cdot, \dive \cdot)$ term. In particular, if the space $\tilde{\bdd{X}}_B$ has the property that
\begin{align}
	\label{eq:disc tilde conds}
	\bdd{v} \in \tilde{\bdd{X}}_B \implies (\dive \bdd{v}, q) = 0 \qquad \forall q \in \dive \bdd{X}_I,
\end{align}
then the dependency on the data will be removed. Of course, we still want the space $\tilde{\bdd{X}}_B$ to correspond to degrees of freedom associated with the element boundaries. Therefore, we augment \cref{eq:disc tilde conds} with additional conditions
\begin{align}
	\label{eq:disc tilde conds 2}
	\bdd{v} \in \tilde{\bdd{X}}_B \implies a_{\lambda}(\bdd{v}, \bdd{z}) = 0 \qquad \forall \bdd{z} \in \bdd{N}_I,
\end{align}
where $\bdd{N}_I$ is given by \cref{eq:ni def}. Below, we show that conditions \cref{eq:disc tilde conds,eq:disc tilde conds 2} are independent. Consequently, we arrive at the following choice of the boundary space:
\begin{align}
\label{eq:tildexb def}
\tilde{\bdd{X}}_B &:= \{ \bdd{v} \in \bdd{X}_D : a(\bdd{v}, \bdd{z}) = 0 \ \forall \bdd{z} \in \bdd{N}_I \text{ and } (\dive \bdd{v}, r) = 0 \ \forall r \in \dive \bdd{X}_I \},
\end{align}
where we used the definition of $\bdd{N}_I$ \cref{eq:ni def} to rewrite \cref{eq:disc tilde conds 2} in terms of $a(\cdot,\cdot)$ by dropping the $(\dive \cdot, \dive \cdot)$ term in $a_{\lambda}(\cdot,\cdot)$. Similarly, we define $\tilde{\bdd{X}}_B^{\dagger}$ to be the corresponding ``adjoint" space:
\begin{align*}
\tilde{\bdd{X}}_B^{\dagger} &:= \{ \bdd{v}^{\dagger} \in \bdd{X}_D : a(\bdd{z}, \bdd{v}^{\dagger}) = 0 \ \forall \bdd{z} \in \bdd{N}_I \text{ and } (\dive \bdd{v}^{\dagger}, r) = 0 \ \forall r \in \dive \bdd{X}_I \}.
\end{align*}
The equivalences $\bdd{N}_I = \oplus_{K \in \mathcal{T}} \bdd{N}_I(K)$ and $\bdd{X}_I = \oplus_{K \in \mathcal{T}} \bdd{X}_I(K)$ mean that the conditions appearing in \cref{eq:tildexb def} decouple into independent local conditions for each $K \in \mathcal{T}$:
\begin{align}
	\label{eq:tildexb local}
	a_K(\bdd{v}, \bdd{z}) = 0 \quad \forall \bdd{z} \in \bdd{N}_I(K) \quad \text{and} \quad (\dive \bdd{v}, r)_K = 0 \quad \forall r \in \dive \bdd{X}_I(K).
\end{align}
Moreover, conditions \cref{eq:tildexb local} are linearly independent of one another. This can most easily be seen from the matrix form of \cref{eq:tildexb local} which reads
\begin{alignat*}{2}
	\vec{z}_{I,K}^T \bdd{E}_{II} \vec{v}_{I,K} &= -\vec{z}_{I,K}^T \bdd{E}_{IB} \vec{v}_{B,K} \qquad & & \forall \bdd{z} \in \bdd{N}_I(K) \\
	\vec{r}_K^T \bdd{G}_{\iota I} \vec{v}_{I,K} &= - \vec{r}_K^T \bdd{G}_{\iota B} \vec{v}_{B,K} \qquad & &\forall r \in \dive \bdd{X}_I(K).
\end{alignat*}
Note that for any $\bdd{z} \in \bdd{N}_I(K)$ and  $s \in \dive \bdd{X}_I(K)$, $\vec{z}_{I,K}^T \bdd{G}_{\iota I}^T \vec{s}_K = 0$ by definition, and so we equivalently have
\begin{align}
	\label{eq:tildexb local matrix}
	\begin{bmatrix}
		\vec{z}_{I,K} \\ \vec{r}_K
	\end{bmatrix}^T \begin{bmatrix}
		\bdd{E}_{II} & \bdd{G}_{\iota I}^T \\
		\bdd{G}_{\iota I} & \bdd{0} 
	\end{bmatrix} \begin{bmatrix}
	\vec{v}_{I,K} \\ *
\end{bmatrix} = -\begin{bmatrix}
\vec{z}_{I,K} \\ \vec{r}_K
\end{bmatrix}^T \begin{bmatrix}
\bdd{E}_{IB} \\ \bdd{G}_{\iota B}
\end{bmatrix} \vec{v}_{B,K}
\end{align}
for all $(\bdd{z}, r) \in \bdd{N}_I(K) \times \dive \bdd{X}_I(K)$. Here, $*$ denotes an unimportant (but appropriately sized) vector. By \cref{lem:interior inversion}, the matrix \cref{eq:interior stokes matrix} appearing on the LHS above is invertible, which means that the conditions in \cref{eq:tildexb local} are indeed linearly independent. Moreover, we have the following inclusion:
\begin{align}
	\label{eq:matrix relation incl tildexb}
	\{ \bdd{v} \in \bdd{X}_D : \vec{v}_{I,K} = \bdd{S}_K \vec{v}_{B,K} \ \forall K \in \mathcal{T} \} \subseteq \tilde{\bdd{X}}_B, 
\end{align}
where
\begin{align}
	\label{eq:zk def}
	\bdd{S}_K := - \begin{bmatrix}
		\bdd{I} & \bdd{0}
	\end{bmatrix} \begin{bmatrix}
		\bdd{E}_{II} & \bdd{G}_{\iota I}^T \\
		\bdd{G}_{\iota I} & \bdd{0} 
	\end{bmatrix}^{-1} \begin{bmatrix}
		\bdd{E}_{IB} \\ \bdd{G}_{\iota B}
	\end{bmatrix}.
\end{align}
As we later show (in \cref{lem:tildexb matrix characterization}), the reverse inclusion also holds, meaning that \cref{eq:matrix relation incl tildexb} holds as an equality. 

A similar characterization is obtained for $\tilde{\bdd{X}}_B^{\dagger}$ by first expressing conditions \cref{eq:tildexb local matrix} as
\begin{align}
	\label{eq:tildexb local matrix big}
	\begin{bmatrix}
		\begin{array}{c}
			\vec{0} \\ \vec{0} \\ \hline \vec{z}_{I,K} \\ \vec{r}_K
		\end{array}
	\end{bmatrix}^T
	\begin{bmatrix}
		\begin{array}{cc|cc}
			* & * & \bdd{E}_{BI} & \bdd{G}_{\iota B}^T \\
			* & * &  * & \bdd{0}\\
			\hline 
			\bdd{E}_{IB} & * &  \bdd{E}_{II} & \bdd{G}_{\iota I}^T \\
			\bdd{G}_{\iota B} & \bdd{0} & \bdd{G}_{\iota I} & \bdd{0}
		\end{array}
	\end{bmatrix}
	\begin{bmatrix}
		\begin{array}{c}
			\vec{v}_{B,K} \\ \vec{0} \\ \hline \vec{v}_{I,K} \\ *
		\end{array}
	\end{bmatrix} = 0
\end{align}
for all $(\bdd{z}, r) \in \bdd{N}_I(K) \times \dive \bdd{X}_I(K)$, where we again use $*$ to denote unimportant (but again appropriately sized) vectors or matrices. Using similar arguments, we may show that the conditions for the adjoint space $\tilde{\bdd{X}}_B^{\dagger}$ are the transpose of the conditions in \cref{eq:tildexb local matrix big}, which leads to the following relation:
\begin{align*}
	\{ \bdd{v} \in \bdd{X}_D : \vec{v}_{I,K} = \bdd{T}_K \vec{v}_{B,K} \ \forall K \in \mathcal{T} \} \subseteq \tilde{\bdd{X}}_B^{\dagger}, 
\end{align*}
where
\begin{align}
	\label{eq:sk def}
	\bdd{T}_K := - \begin{bmatrix}
		\bdd{I} & \bdd{0}
	\end{bmatrix} \begin{bmatrix}
		\bdd{E}_{II}^T & \bdd{G}_{\iota I}^T \\
		\bdd{G}_{\iota I} & \bdd{0} 
	\end{bmatrix}^{-1} \begin{bmatrix}
		\bdd{E}_{BI}^T \\ \bdd{G}_{\iota B}
	\end{bmatrix}.
\end{align}
Note that $\bdd{T}_K$ is well-defined since the matrix appearing in \cref{eq:sk def} is the transpose of the (invertible) matrix \cref{eq:interior stokes matrix}. In summary, we have
\begin{lemma}
	\label{lem:tildexb matrix characterization}
	$\tilde{\bdd{X}}_B = \{ \bdd{v} \in \bdd{X}_D : \vec{v}_{I,K} = \bdd{S}_K \vec{v}_{B,K} \ \forall K \in \mathcal{T} \}$ and $\tilde{\bdd{X}}_B^{\dagger} = \{ \bdd{v} \in \bdd{X}_D : \vec{v}_{I,K} = \bdd{T}_K \vec{v}_{B,K} \ \forall K \in \mathcal{T} \}$.
\end{lemma}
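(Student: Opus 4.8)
The plan is to establish each of the two set equalities by pairing the forward inclusion, already recorded in \cref{eq:matrix relation incl tildexb} (and its adjoint analogue), with a reverse inclusion obtained from a uniqueness argument. Since $\bdd{N}_I=\bigoplus_{K}\bdd{N}_I(K)$ and $\bdd{X}_I=\bigoplus_{K}\bdd{X}_I(K)$, membership $\bdd{v}\in\tilde{\bdd{X}}_B$ is equivalent to the element-local conditions \cref{eq:tildexb local} holding on every $K\in\mathcal{T}$, so I would argue one element at a time: for a fixed boundary vector $\vec{v}_{B,K}$, I want to show that \cref{eq:tildexb local} holds if and only if $\vec{v}_{I,K}=\bdd{S}_K\vec{v}_{B,K}$.

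For the inclusion $\supseteq$ I would re-examine \cref{eq:matrix relation incl tildexb} directly: setting $\begin{bmatrix}\vec{v}_{I,K}\\\vec{w}\end{bmatrix}=-\bdd{M}^{-1}\begin{bmatrix}\bdd{E}_{IB}\\\bdd{G}_{\iota B}\end{bmatrix}\vec{v}_{B,K}$, where $\bdd{M}$ denotes the interior Stokes matrix \cref{eq:interior stokes matrix}, the second block row reproduces the divergence condition exactly, while left-multiplying the first block row by $\vec{z}_{I,K}^T$ for $\bdd{z}\in\bdd{N}_I(K)$ annihilates the coupling term $\vec{z}_{I,K}^T\bdd{G}_{\iota I}^T\vec{w}$, since $\bdd{G}_{\iota I}\vec{z}_{I,K}=\vec{0}$ for divergence-free $\bdd{z}$. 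This recovers both conditions in \cref{eq:tildexb local}, confirming that $\bdd{S}_K\vec{v}_{B,K}$ is admissible.

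The reverse inclusion is the crux, and I would obtain it from uniqueness. Viewing \cref{eq:tildexb local} as a linear system for the unknown $\vec{v}_{I,K}$ with $\vec{v}_{B,K}$ fixed, it comprises $\dim\bdd{N}_I(K)$ equations of the first type and $\dim\dive\bdd{X}_I(K)$ of the second. The key observation is that this system is square: because $\dive:\bdd{X}_I(K)\to\dive\bdd{X}_I(K)$ is surjective with kernel $\bdd{N}_I(K)$, the rank-nullity theorem gives $\dim\bdd{N}_I(K)+\dim\dive\bdd{X}_I(K)=\dim\bdd{X}_I(K)$. Combined with the linear independence of the conditions already deduced from \cref{eq:tildexb local matrix} via the invertibility of $\bdd{M}$ (\cref{lem:interior inversion}), this makes the system nonsingular, so $\vec{v}_{I,K}$ is uniquely determined by $\vec{v}_{B,K}$. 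Since $\bdd{S}_K\vec{v}_{B,K}$ already solves it, every $\bdd{v}\in\tilde{\bdd{X}}_B$ must satisfy $\vec{v}_{I,K}=\bdd{S}_K\vec{v}_{B,K}$, yielding the first equality.

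The characterization of $\tilde{\bdd{X}}_B^{\dagger}$ would then follow from the identical argument applied to the transposed conditions \cref{eq:tildexb local matrix big}: the matrix defining $\bdd{T}_K$ in \cref{eq:sk def} is the transpose of the invertible interior Stokes matrix, hence itself invertible, and the same dimension count and nonsingularity give $\vec{v}_{I,K}=\bdd{T}_K\vec{v}_{B,K}$. I expect the only genuine subtlety to be the bookkeeping that the number of independent constraints is exactly $\dim\bdd{X}_I(K)$ — the point where rank-nullity and the previously established linear independence must be invoked together; every remaining step is a direct consequence of facts already assembled earlier in the section.
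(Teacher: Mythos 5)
Your proposal is correct, but it takes a genuinely different route from the paper's. The paper handles the reverse inclusion variationally: given $\bdd{v} \in \tilde{\bdd{X}}_B$, it builds the comparison function $\bdd{w}$ with the same boundary degrees of freedom and interior degrees of freedom $\bdd{S}_K \vec{v}_{B,K}$, observes that $\bdd{e} := \bdd{v} - \bdd{w}$ lies in $\tilde{\bdd{X}}_B \cap \bdd{X}_I$, and then applies the two defining conditions of $\tilde{\bdd{X}}_B$ to $\bdd{e}$ itself: the divergence condition with $r = \dive \bdd{e} \in \dive \bdd{X}_I$ forces $\bdd{e} \in \bdd{N}_I$, and then the orthogonality condition with $\bdd{z} = \bdd{e}$ gives $a(\bdd{e}, \bdd{e}) = 0$, hence $\bdd{e} \equiv \bdd{0}$ by ellipticity \cref{eq:a elliptic}; no dimension count is ever needed. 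You instead treat \cref{eq:tildexb local} as a linear system for $\vec{v}_{I,K}$ and prove nonsingularity: squareness via rank--nullity for $\dive : \bdd{X}_I(K) \to \dive \bdd{X}_I(K)$ (kernel $\bdd{N}_I(K)$), plus row independence inherited from the invertibility of \cref{eq:interior stokes matrix}. Both arguments establish the same uniqueness assertion, but yours makes explicit the ``square nonsingular system'' structure (and yields $\dim \tilde{\bdd{X}}_B = $ number of boundary degrees of freedom as a by-product), while the paper's is shorter and reuses the same ellipticity trick it deploys elsewhere, e.g.\ in \cref{thm:xd decomp and inf-sup}. One point you should state explicitly: the independence you import from the discussion around \cref{eq:tildexb local matrix} must be independence of the conditions as functionals of the interior unknowns $\vec{v}_{I,K}$ alone, with $\vec{v}_{B,K}$ fixed, not merely as functionals on all of $\bdd{X}_D$. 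This stronger form does hold, precisely because $\vec{z}_{I,K}^T \bdd{G}_{\iota I}^T = \vec{0}$ for $\bdd{z} \in \bdd{N}_I(K)$ --- the same annihilation identity you invoke in the forward inclusion --- so that any vanishing combination of rows gives a vector annihilated by the invertible matrix \cref{eq:interior stokes matrix}; with that observation supplied, your argument is complete.
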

\begin{proof}
	Let $\bdd{v} \in \tilde{\bdd{X}}_B$ and define $\bdd{w} \in \bdd{X}_D$ by the rule $\bdd{w} := \vec{\Phi}_B^T \vec{v}_B + \vec{\Phi}_I^T \vec{w}_I$, where $\vec{w}_{I,K} := \bdd{S}_K \vec{v}_{B,K}$ for all $K \in \mathcal{T}$. By \cref{eq:matrix relation incl tildexb}, $\bdd{w} \in \tilde{\bdd{X}}_B$. The function $\bdd{X}_D \ni \bdd{e} := \bdd{v} - \bdd{w} = \vec{\Phi}_I^T (\vec{v}_I - \vec{w}_I)$ then satisfies $\bdd{e} \in \tilde{\bdd{X}}_B$ by linearity and $\bdd{e} \in \bdd{X}_I$ since the boundary degrees of freedom of $\bdd{e}$ are identically zero. By the second condition in the definition of $\tilde{\bdd{X}}_B$ \cref{eq:tildexb def}, $\|\dive \bdd{e}\|^2 = 0$ and so $\bdd{e} \in \bdd{N}_I$. The first condition in the definition of $\tilde{\bdd{X}}_B$ gives $a(\bdd{e}, \bdd{e}) = 0$, and so $\bdd{e} \equiv \bdd{0}$ thanks to \cref{eq:a elliptic}. Consequently, $\bdd{v} = \bdd{w}$ and $\tilde{\bdd{X}}_B = \{ \bdd{v} \in \bdd{X}_D : \vec{v}_{I,K} = \bdd{S}_K \vec{v}_{B,K} \ \forall K \in \mathcal{T} \}$. Similar arguments show that $\tilde{\bdd{X}}_B^{\dagger} = \{ \bdd{v} \in \bdd{X}_D : \vec{v}_{I,K} = \bdd{T}_K \vec{v}_{B,K} \ \forall K \in \mathcal{T} \}$.
\end{proof}
\Cref{lem:tildexb matrix characterization} confirms the expectation that the spaces $\tilde{\bdd{X}}_B$ and $\tilde{\bdd{X}}_B^{\dagger}$ are associated with element boundaries: i.e. the interior degrees of freedom of a function in $\tilde{\bdd{X}}_B$ or $\tilde{\bdd{X}}_B^{\dagger}$ are uniquely determined by its boundary degrees of freedom, which, in turn, means that $\bdd{X}_D = \bdd{X}_I \oplus \tilde{\bdd{X}}_B = \bdd{X}_I \oplus \tilde{\bdd{X}}_B^{\dagger}$.

We record this result, along with some useful properties of the spaces $\tilde{\bdd{X}}_B$ and $\dive \tilde{\bdd{X}}_B$ which we shall need shortly:
\begin{theorem}
	\label{thm:xd decomp and inf-sup}
	There holds
	\begin{align}
	\label{eq:xd decomp}
	\bdd{X}_D = \bdd{X}_I \oplus \tilde{\bdd{X}}_B \quad \text{and} \quad \dive \bdd{X}_D = \dive \bdd{X}_I \oplus \dive \tilde{\bdd{X}}_B.
	\end{align}
	Moreover, the pair $\tilde{\bdd{X}}_B \times \dive \tilde{\bdd{X}}_B$ satisfies an inf-sup condition:
	\begin{align}
	\label{eq:inf-sup boundary}
	\frac{\alpha \beta_X}{M + \alpha} \|\tilde{r}\| &\leq \sup_{\bdd{0} \neq \tilde{\bdd{v}} \in \tilde{\bdd{X}}_B} \frac{(\dive \tilde{\bdd{v}}, \tilde{r})}{\|\tilde{\bdd{v}}\|_1} \qquad \forall \tilde{r} \in \dive \tilde{\bdd{X}}_B, 
	\end{align}
	where $M > 0$ \cref{eq:a bounded}, $\alpha > 0$ \cref{eq:a elliptic}, and $\beta_X > 0$ \cref{eq:discrete inf-sup}. \Cref{eq:xd decomp,eq:inf-sup boundary} also hold with $\tilde{\bdd{X}}_B$ replaced by $\tilde{\bdd{X}}_B^{\dagger}$.
\end{theorem}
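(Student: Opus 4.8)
The plan is to establish the two direct-sum decompositions first and then bootstrap them, together with the global inf-sup constant $\beta_X$, into the inf-sup bound for the boundary pair. For the velocity decomposition I would read everything off \cref{lem:tildexb matrix characterization}, which identifies $\tilde{\bdd X}_B$ with the graph $\{\bdd v \in \bdd X_D : \vec v_{I,K} = \bdd S_K \vec v_{B,K}\ \forall K \in \mathcal{T}\}$. A function in $\bdd X_I$ has vanishing boundary degrees of freedom, so if it also lies in $\tilde{\bdd X}_B$ then $\vec v_{I,K} = \bdd S_K \vec 0 = \vec 0$ and it vanishes, giving $\bdd X_I \cap \tilde{\bdd X}_B = \{\bdd 0\}$; conversely, any $\bdd v \in \bdd X_D$ splits as the element of $\tilde{\bdd X}_B$ carrying the same boundary degrees of freedom plus a remainder with zero boundary degrees of freedom, i.e.\ a remainder in $\bdd X_I$. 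Applying $\dive$ to $\bdd X_D = \bdd X_I \oplus \tilde{\bdd X}_B$ then gives $\dive \bdd X_D = \dive \bdd X_I + \dive \tilde{\bdd X}_B$, and this sum is direct because the second defining condition of $\tilde{\bdd X}_B$ in \cref{eq:tildexb def} says exactly that $\dive \tilde{\bdd X}_B \perp \dive \bdd X_I$ in $L^2$: if $s = \dive \bdd v_I = \dive \tilde{\bdd v}$ with $\bdd v_I \in \bdd X_I$ and $\tilde{\bdd v} \in \tilde{\bdd X}_B$, then $\|s\|^2 = (\dive \tilde{\bdd v}, s) = 0$ since $s \in \dive \bdd X_I$.

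For the inf-sup bound the decisive step is to lift $\tilde r$ with care. Given $\tilde r \in \dive \tilde{\bdd X}_B \subseteq \dive \bdd X_D$, the global inf-sup condition \cref{eq:discrete inf-sup} supplies (via the usual Babu\v{s}ka--Brezzi bounded right inverse of $\dive$) a field $\bdd v \in \bdd X_D$ with $\dive \bdd v = \tilde r$ and $\|\bdd v\|_1 \le \beta_X^{-1}\|\tilde r\|$. I then decompose $\bdd v = \bdd v_I + \tilde{\bdd v}$ using the first decomposition. The point of lifting $\tilde r$ \emph{exactly} is that $\bdd v_I$ turns out to be divergence free: $\dive \bdd v_I = \tilde r - \dive \tilde{\bdd v} \in \dive \tilde{\bdd X}_B$ while simultaneously $\dive \bdd v_I \in \dive \bdd X_I$, so the directness just proved forces $\dive \bdd v_I = 0$, that is $\bdd v_I \in \bdd N_I$. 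The first defining condition of $\tilde{\bdd X}_B$ then removes the cross term in the energy identity: $\alpha \|\tilde{\bdd v}\|_1^2 \le a(\tilde{\bdd v}, \tilde{\bdd v}) = a(\tilde{\bdd v}, \bdd v) - a(\tilde{\bdd v}, \bdd v_I) = a(\tilde{\bdd v}, \bdd v) \le M \|\tilde{\bdd v}\|_1 \|\bdd v\|_1$, where $a(\tilde{\bdd v}, \bdd v_I) = 0$ because $\bdd v_I \in \bdd N_I$ and I used \cref{eq:a elliptic,eq:a bounded}. Hence $\|\tilde{\bdd v}\|_1 \le (M/\alpha)\|\bdd v\|_1 \le M(\alpha \beta_X)^{-1}\|\tilde r\|$, and since $\dive \tilde{\bdd v} = \dive \bdd v - \dive \bdd v_I = \tilde r$ I get $(\dive \tilde{\bdd v}, \tilde r) = \|\tilde r\|^2$; combining yields $\sup_{\tilde{\bdd w}} (\dive \tilde{\bdd w}, \tilde r)/\|\tilde{\bdd w}\|_1 \ge (\alpha\beta_X/M)\|\tilde r\| \ge (\alpha\beta_X/(M+\alpha))\|\tilde r\|$, which is the claim (indeed with the slightly sharper constant $\alpha\beta_X/M$). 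The adjoint statement follows verbatim from $\bdd X_D = \bdd X_I \oplus \tilde{\bdd X}_B^\dagger$ in \cref{lem:tildexb matrix characterization}, except that one places $\bdd v_I$ in the \emph{first} argument of $a(\cdot,\cdot)$ so that the defining condition $a(\bdd z, \cdot) = 0$, $\bdd z \in \bdd N_I$, of $\tilde{\bdd X}_B^\dagger$ annihilates the cross term.

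I expect the main obstacle to be precisely this choice of lift. A generic inf-sup witness $\bdd v$ taken directly from the supremum in \cref{eq:discrete inf-sup} only guarantees $(\dive \bdd v, \tilde r)$ large, and its interior component $\bdd v_I$ then has nonzero divergence; the energy identity would produce a term $(p_I, \dive \bdd v)$ involving the interior pressure $p_I$ (the Lagrange multiplier from \cref{lem:interior inversion}), which cannot be bounded without invoking the interior inf-sup constant and thereby reintroduces $h$- and $p$-dependence into the estimate. Lifting $\tilde r$ exactly, so that $\bdd v_I \in \bdd N_I$ and this pressure term disappears, is what keeps the constant dependent only on $M$, $\alpha$, and $\beta_X$, and it is the one nonobvious ingredient; the remaining manipulations are routine once the decompositions and this lift are in place.
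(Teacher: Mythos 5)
Your proof is correct, and it follows the same overall strategy as the paper's: both rest on \cref{lem:tildexb matrix characterization} for the decompositions, and both prove the inf-sup bound by lifting $\tilde{r}$ \emph{exactly} with the bounded right inverse furnished by \cref{eq:discrete inf-sup} and then correcting the lift by a divergence-free interior function. The execution of the correction differs in a way worth noting. The paper constructs the corrector $\bdd{z} \in \bdd{N}_I$ via Lax--Milgram (solving $a(\bdd{z},\bdd{n}) = a(\bdd{w},\bdd{n})$ for all $\bdd{n} \in \bdd{N}_I$), bounds $\|\bdd{z}\|_1 \le M\alpha^{-1}\|\bdd{w}\|_1$, and applies the triangle inequality, arriving at $\|\tilde{\bdd{v}}\|_1 \le (M+\alpha)(\alpha\beta_X)^{-1}\|\tilde{r}\|$. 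You instead obtain the corrector as the $\bdd{X}_I$-component of the already-established splitting $\bdd{X}_D = \bdd{X}_I \oplus \tilde{\bdd{X}}_B$, use the directness of the divergence decomposition to conclude that it lies in $\bdd{N}_I$, and then estimate $\|\tilde{\bdd{v}}\|_1$ by a single energy argument, $\alpha\|\tilde{\bdd{v}}\|_1^2 \le a(\tilde{\bdd{v}},\bdd{v}) \le M\|\tilde{\bdd{v}}\|_1\|\bdd{v}\|_1$, that avoids the triangle inequality; this yields the sharper constant $\alpha\beta_X/M$ in place of $\alpha\beta_X/(M+\alpha)$. In fact the two corrections coincide: your $\bdd{v}_I$ lies in $\bdd{N}_I$ and satisfies $a(\bdd{v}_I,\bdd{n}) = a(\bdd{v},\bdd{n})$ for all $\bdd{n} \in \bdd{N}_I$, which is exactly the paper's Lax--Milgram problem, so the improvement comes purely from the better norm estimate. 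One step you rightly make explicit, which the paper leaves implicit behind the word ``Consequently,'' is that the sum $\dive\bdd{X}_I + \dive\tilde{\bdd{X}}_B$ is direct because of the $L^2$-orthogonality built into the second condition of \cref{eq:tildexb def}; this directness is precisely what forces $\dive\bdd{v}_I = 0$ in your argument, and your closing remarks about why a generic near-maximizer of the supremum in \cref{eq:discrete inf-sup} would not suffice accurately reflect the role this exact lift plays in the paper's proof as well.
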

\begin{proof}
	As mentioned above, the decomposition $\bdd{X}_D = \bdd{X}_I \oplus \tilde{\bdd{X}}_B = \bdd{X}_I \oplus \tilde{\bdd{X}}_B^{\dagger}$ follows from \cref{lem:tildexb matrix characterization}. Consequently, $\dive \bdd{X}_D = \dive \bdd{X}_I \oplus \dive \tilde{\bdd{X}}_B$.

	Let $\tilde{r} \in \dive \tilde{\bdd{X}}_B$ be given. By \cref{eq:discrete inf-sup}, there exists $\bdd{w} \in \bdd{X}_D$ such that $\dive \bdd{w} = \tilde{r}$ and $\|\bdd{w}\|_1 \leq \beta_X^{-1} \|\tilde{r}\|$. Thanks to \cref{eq:a elliptic,eq:a bounded}, there exists $\bdd{z}_I \in \bdd{N}_I$ such that $a(\bdd{z}, \bdd{n}) = a(\bdd{w}, \bdd{n})$ for all $\bdd{n} \in \bdd{N}_I$ satisfying $\|\bdd{z}\|_1 \leq M \alpha^{-1} \|\bdd{w}\|_1$ by the Lax-Milgram Lemma. The function $\tilde{\bdd{v}} := \bdd{w} - \bdd{z}$ then satisfies $\dive \tilde{\bdd{v}} = \tilde{r}$, $\tilde{\bdd{v}} \in \tilde{\bdd{X}}_B$, and $\|\bdd{v}\|_1  \leq (M+\alpha)/(\alpha\beta_X)\|\tilde{r}\|$. Given $\tilde{q} \in \dive \tilde{\bdd{X}}_B^{\dagger}$, a function $\bdd{v}^{\dagger} \in \tilde{\bdd{X}}_B^{\dagger}$ satisfying $\dive \bdd{v}^{\dagger} = \tilde{q}$ and $\|\bdd{v}^{\dagger}\|_1 \leq (M+\alpha)/(\alpha\beta_X)\|\tilde{q}\|$ may be constructed analogously.
\end{proof}

\subsection{The Statically Condensed Iterated Penalty Method}

Similarly to \cref{eq:vanilla ip hat,eq:vanilla ip interior}, the decomposition \cref{eq:xd decomp} decouples the solution to \cref{eq:stokes variational form fem} into its boundary and interior components. However, this time there is a crucial difference in that the interior components $\{ \bdd{u}_K \}$ defined in \cref{eq:stokes interior} do not appear in the data for the system \cref{eq:stokes tilde spaces} determining the boundary component $\tilde{\bdd{u}}$:
\begin{lemma}
	\label{lem:stokes sol decomp}
	The solution to \cref{eq:stokes variational form fem} may be written in the form 
	\begin{align*}
	\bdd{u}_{X} := \tilde{\bdd{u}} + \sum_{K \in \mathcal{T}}\bdd{u}_K \quad \text{and} \quad q_{X} := \tilde{q} + \sum_{K \in \mathcal{T}} q_{K},
	\end{align*}
	where:
	
	1. $(\tilde{\bdd{u}}, \tilde{q}) \in \tilde{\bdd{X}}_B \times \dive \tilde{\bdd{X}}_B$ satisfy
	\begin{subequations}
		\label{eq:stokes tilde spaces}
		\begin{alignat}{2}
			\label{eq:stokes tilde spaces 1}
			a( \tilde{\bdd{u}}, \bdd{v} ) - (\tilde{q}, \dive \bdd{v})  &= L(\bdd{v}) \qquad & &\forall \bdd{v} \in \tilde{\bdd{X}}_B^{\dagger}, \\
			\label{eq:stokes tilde spaces 2}
			- (r, \dive \tilde{\bdd{u}}) &= 0 \qquad & & \forall r \in \dive \tilde{\bdd{X}}_B^{\dagger}.
		\end{alignat}
	\end{subequations}
	
	\noindent 2. For each $K \in \mathcal{T}$, $(\bdd{u}_{K}, q_{K}) \in \bdd{X}_I(K) \times \dive \bdd{X}_I(K)$ satisfy
	\begin{subequations}
		\label{eq:stokes interior}
		\begin{alignat}{2}
			a_K( \bdd{u}_{K}, \bdd{v} ) - (q_{K}, \dive \bdd{v})_K  &= (\bdd{f}, \bdd{v})_K - a_K( \tilde{\bdd{u}}, \bdd{v}) \qquad & &\forall \bdd{v} \in \bdd{X}_I(K), \\
			- (r, \dive \bdd{u}_{K})_K &= 0 \qquad & & \forall r \in \dive \bdd{X}_I(K).
		\end{alignat}
	\end{subequations}
	Moreover, the systems \cref{eq:stokes tilde spaces} and \cref{eq:stokes interior} are uniquely solvable.
\end{lemma}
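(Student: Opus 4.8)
The plan is to produce the decomposition by splitting the (unique) solution of \cref{eq:stokes variational form fem} according to the direct sums supplied by \cref{thm:xd decomp and inf-sup}, and then to establish unique solvability of the two systems by entirely different means: \cref{lem:interior inversion} for the interior blocks, and a hands-on kernel argument for the boundary block. First I would invoke \cref{eq:xd decomp} to write $\bdd{u}_{X} = \tilde{\bdd{u}} + \bdd{u}_I$ and $q_{X} = \tilde{q} + q_I$ with $\tilde{\bdd{u}} \in \tilde{\bdd{X}}_B$, $\bdd{u}_I \in \bdd{X}_I$, $\tilde{q} \in \dive \tilde{\bdd{X}}_B$, $q_I \in \dive \bdd{X}_I$, and then use $\bdd{X}_I = \oplus_K \bdd{X}_I(K)$ and $\dive \bdd{X}_I = \oplus_K \dive \bdd{X}_I(K)$ to split further as $\bdd{u}_I = \sum_K \bdd{u}_K$, $q_I = \sum_K q_K$. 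Because $\bdd{u}_X$ is pointwise divergence free, $\dive \tilde{\bdd{u}} \in \dive \tilde{\bdd{X}}_B$ and $\dive \bdd{u}_I \in \dive \bdd{X}_I$ lie in complementary summands of $\dive \bdd{X}_D$, so both vanish; in particular $\dive \tilde{\bdd{u}} = 0$ and each $\bdd{u}_K \in \bdd{N}_I(K)$.

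Next I would verify that these components satisfy the stated systems. Testing \cref{eq:stokes variational form fem 1} against $\bdd{v} \in \bdd{X}_I(K)$ localizes every term to $K$ (since $\supp \bdd{v} \subseteq K$), and the term $(\tilde{q}, \dive \bdd{v})_K$ drops because $\tilde{q} \in \dive \tilde{\bdd{X}}_B$ is $L^2$-orthogonal to $\dive \bdd{X}_I$ by the definition \cref{eq:tildexb def}; this recovers \cref{eq:stokes interior}, its second equation being immediate from $\dive \bdd{u}_K = 0$. Testing instead against $\bdd{v} \in \tilde{\bdd{X}}_B^{\dagger}$, the interior contribution disappears: $a(\bdd{u}_I, \bdd{v}) = 0$ since $\bdd{u}_I \in \bdd{N}_I$, and $(q_I, \dive \bdd{v}) = 0$ since $q_I \in \dive \bdd{X}_I$, both by the defining conditions of $\tilde{\bdd{X}}_B^{\dagger}$. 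This yields \cref{eq:stokes tilde spaces 1}, while \cref{eq:stokes tilde spaces 2} follows from $\dive \tilde{\bdd{u}} = 0$. At this stage the decomposition, and hence existence of solutions to both systems, is established.

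For unique solvability the interior systems are handled directly by \cref{lem:interior inversion}. The boundary system \cref{eq:stokes tilde spaces} is the delicate point: it is a Petrov--Galerkin saddle-point problem whose trial space $\tilde{\bdd{X}}_B \times \dive \tilde{\bdd{X}}_B$ differs from its test space $\tilde{\bdd{X}}_B^{\dagger} \times \dive \tilde{\bdd{X}}_B^{\dagger}$. These have equal dimension by \cref{eq:xd decomp} and its $\tilde{\bdd{X}}_B^{\dagger}$-analogue, so the system is square and it suffices to show that the homogeneous problem ($L = 0$) has only the trivial solution. From \cref{eq:stokes tilde spaces 2}, $\dive \tilde{\bdd{u}}$ is orthogonal to $\dive \tilde{\bdd{X}}_B^{\dagger}$, and being in $\dive \tilde{\bdd{X}}_B$ it is orthogonal to $\dive \bdd{X}_I$ as well; since $\dive \bdd{X}_D = \dive \bdd{X}_I \oplus \dive \tilde{\bdd{X}}_B^{\dagger}$ and $\dive \tilde{\bdd{u}} \in \dive \bdd{X}_D$, this forces $\dive \tilde{\bdd{u}} = 0$. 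Thus $\tilde{\bdd{u}}$ lies in the divergence-free subspace of $\bdd{X}_D$, which splits as $\bdd{N}_I \oplus (\tilde{\bdd{X}}_B^{\dagger} \cap \ker \dive)$; testing \cref{eq:stokes tilde spaces 1} against divergence-free elements of $\tilde{\bdd{X}}_B^{\dagger}$ and using the defining property $a(\tilde{\bdd{u}}, \bdd{z}) = 0$ for $\bdd{z} \in \bdd{N}_I$ shows $a(\tilde{\bdd{u}}, \bdd{v}) = 0$ for \emph{every} divergence-free $\bdd{v} \in \bdd{X}_D$, in particular for $\bdd{v} = \tilde{\bdd{u}}$. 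Ellipticity \cref{eq:a elliptic} then gives $\tilde{\bdd{u}} = 0$, and \cref{eq:stokes tilde spaces 1} together with the same orthogonal-decomposition argument applied to the pressure yields $\tilde{q} = 0$.

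The main obstacle, and the step I would treat most carefully, is precisely this uniqueness argument for the boundary system: since the trial and test spaces differ, neither the coercivity of $a$ nor the inf-sup bound \cref{eq:inf-sup boundary} can be used in the usual symmetric manner. The key is that the direct-sum structure of \cref{eq:xd decomp} converts the weak orthogonality relations into the genuine identity $\dive \tilde{\bdd{u}} = 0$, which in turn reduces the velocity equation to a test over the full divergence-free subspace of $\bdd{X}_D$, where ellipticity applies with the legitimate choice $\bdd{v} = \tilde{\bdd{u}}$. Squareness of the system finally upgrades uniqueness to existence, completing the proof.
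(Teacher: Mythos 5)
Your proposal is correct, but it takes a genuinely different route from the paper's proof. The paper argues bottom-up: it first proves unique solvability of the boundary system \cref{eq:stokes tilde spaces} and of the interior systems \cref{eq:stokes interior}, and then verifies that the sum of their solutions satisfies \cref{eq:stokes variational form fem}, so that existence of the discrete solution is re-derived constructively. You argue top-down: you decompose the (already known) discrete solution via \cref{eq:xd decomp}, verify that the components solve the two systems, and only then address unique solvability. The more substantial difference lies in the uniqueness proof for the Petrov--Galerkin boundary system. The paper routes it through the Stokes extension machinery of \cref{sec:stokes extension}: it uses $\dive \tilde{\bdd{X}}_B = \dive \tilde{\bdd{X}}_B^{\dagger}$ (\cref{eq:tilde dive equiv}) to conclude $\dive \tilde{\bdd{u}} = 0$, and then invokes \cref{lem:uniqueness tilde sigma a}, whose proof hinges on the test choice $\bdd{w} = \mathbb{S}^{\dagger} \bdd{z}$ and the identity \cref{eq:app:a dagger id}. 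You bypass the extension operators entirely: the direct sum $\dive \bdd{X}_D = \dive \bdd{X}_I \oplus \dive \tilde{\bdd{X}}_B^{\dagger}$ converts the two orthogonality relations into the genuine identity $\dive \tilde{\bdd{u}} = 0$, and the splitting of the discretely divergence-free subspace as $\bdd{N}_I \oplus \tilde{\bdd{N}}_B^{\dagger}$ (a one-line consequence of the two dagger direct sums in \cref{thm:xd decomp and inf-sup}) shows that $a(\tilde{\bdd{u}}, \cdot)$ annihilates every divergence-free test function, so the legitimate choice $\bdd{v} = \tilde{\bdd{u}}$ and ellipticity \cref{eq:a elliptic} finish the velocity; the same orthogonality argument kills $\tilde{q}$. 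Your route is more elementary and self-contained, needing only \cref{thm:xd decomp and inf-sup}, \cref{lem:interior inversion}, and the definitions of $\tilde{\bdd{X}}_B$ and $\tilde{\bdd{X}}_B^{\dagger}$, whereas the paper's route leans on operators and identities that it must develop anyway for the convergence analysis of SCIP in \cref{sec:scip convergence}, so nothing is wasted there. The one dependency you incur that the paper does not is existence of a solution to \cref{eq:stokes variational form fem} as a starting point; this is harmless, since it follows from \cref{eq:discrete inf-sup} and standard Babu\v{s}ka--Brezzi theory, but it is worth noting that the paper's proof furnishes that existence as a by-product rather than assuming it.
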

\Cref{lem:stokes sol decomp}, whose proof is given in \cref{sec:proof decomp}, shows the finite element solution $(\bdd{u}_{X},q_{X})$ to \cref{eq:stokes variational form fem} may be computed by first solving a global Stokes system posed on the boundary spaces $\tilde{\bdd{X}}_B \times \dive \tilde{\bdd{X}}_B$ \cref{eq:stokes tilde spaces} and then solving decoupled local Stokes systems posed on local interior spaces $\bdd{X}_I(K) \times \dive \bdd{X}_I(K)$ \cref{eq:stokes interior}. Crucially, the data in \cref{eq:stokes tilde spaces} which determines the boundary unknowns $\tilde{\bdd{u}}$ is independent of the interior problem \cref{eq:stokes interior}. In other words, the system \cref{eq:stokes tilde spaces} which determines the boundary degrees of freedom can now be solved independently of \cref{eq:stokes interior}. By way of contrast, this was not the case previously when static condensation was based on $\bdd{X}_B$.

The systems \cref{eq:stokes tilde spaces,eq:stokes interior} now take the form of \textit{Stokes problems} posed over the spaces $\tilde{\bdd{X}}_B \times \dive \tilde{\bdd{X}}_B$ and $\bdd{X}_I(K) \times \dive \bdd{X}_I(K)$. In particular, the construction of a basis for $\dive \tilde{\bdd{X}}_B$ inherits all of the difficulties already mentioned when discussing $\dive \bdd{X}_D$, which led us to consider using the standard iterated penalty method in the first place. However, by the same token, we may solve the global system \cref{eq:stokes tilde spaces} using the standard iterated penalty method with the crucial difference that there is no need to perform static condensation during the iteration. Instead, the interior degrees of freedom are computed once after the boundary component $\tilde{\bdd{u}}$ is in hand by solving \cref{eq:stokes interior}. 

The problem of solving the interior problems \cref{eq:stokes interior} posed over the spaces $\bdd{X}_I(K) \times \dive \bdd{X}_I(K)$ remains. One could, of course, solve the problem using the iterated penalty method, but this would lead to having to iterate over problems of size $\mathcal{O}(p^d)$, which is precisely what we are seeking to avoid. Fortunately, as shown in \cite[Lemma 2.5]{Vogelius83divinv} in the case $d=2$, the local interior pressure space can be characterized explicitly as follows: 
\begin{align}
	\label{eq:divxi character}
\dive \bdd{X}_I(K) = \{ r \in \mathcal{P}_{p-1}(K) \cap L^2_0(K) : r(\bdd{a}) = 0 \text{ for all vertices $\bdd{a}$ of $K$}  \}.
\end{align}
Similarly, in the case $d=3$, one has \cite[Theorem 4.2]{Neilan15}:
\begin{align*}
	\dive \bdd{X}_I(K) = \{ q \in \mathcal{P}_{p-1}(K) \cap L^2_0(K): q \text{ vanishes along element edges} \}.
\end{align*}
\noindent These characterizations mean that a basis for $\dive \bdd{X}_I(K)$ may be constructed via standard methods; see e.g.  \cref{sec:impl notes} for a basis using Bernstein polynomials in the case $d=2$. Consequently, one can assemble and  invert the local systems \cref{eq:stokes interior} directly proceeding element-by-element. 

The overall scheme, dubbed the Statically Condensed Iterated Penalty (SCIP) method, is summarized in \cref{alg:itpen modified}. Crucially, the solve for the interior degrees of freedom now happens \textit{outside} the \texttt{for} loop in \cref{alg:itpen modified}, which means that \textit{each iteration of the standard iterated penalty method applied to \cref{eq:stokes tilde spaces} only entails inverting a linear system of $\mathcal{O}(|\mathcal{T}| p^{d-1})$ unknowns, compared to inverting a system with $\mathcal{O}(|\mathcal{T|} p^d)$ unknowns for the standard iterated penalty method applied to \cref{eq:stokes variational form fem}}. Moreover, inf-sup condition for $\tilde{\bdd{X}}_B \times \dive \tilde{\bdd{X}}_B$ \cref{eq:inf-sup boundary} gives the following analogue of \cref{thm:vanilla ip convergence}:
\begin{theorem}
	\label{thm:boundary ip convergence}
	Let $(\tilde{\bdd{u}}, \tilde{q}) \in \tilde{\bdd{X}}_B \times \dive \tilde{\bdd{X}}_B$ be the solution to \cref{eq:stokes tilde spaces} and $(\tilde{\bdd{u}}^n, \tilde{\bdd{w}}^n)$, $n \in \mathbb{N}$ be given by \cref{alg:itpen modified}. Then, there holds
	\begin{multline}
	\label{eq:scip convergence}
	\max\left\{ \|\tilde{\bdd{u}} - \tilde{\bdd{u}}^n \|_{1}, \left( \frac{M(M+\alpha)^3}{\alpha^3 \beta_X^2} + \frac{\sqrt{d} \lambda(M+\alpha)}{\alpha \beta_X} \right)^{-1} \|\tilde{q} - \dive \tilde{\bdd{w}}^n \| \right\} \\
	\leq \frac{(M + \alpha)^2}{\alpha^2 \beta_X} \|\dive \tilde{\bdd{u}}^n\|,
	\end{multline}
	where $M > 0$ \cref{eq:a bounded}, $\alpha > 0$ \cref{eq:a elliptic}, and $\beta_X > 0$ \cref{eq:discrete inf-sup}. Moreover,
	\begin{align}
	\label{eq:scip div convergence}
	\|\dive \tilde{\bdd{u}}^n\| \leq \sqrt{d} \left[ \frac{M(M + \alpha)^4}{\alpha^4 \beta_X^2 \lambda } \right]^n \| \tilde{\bdd{u}} - \tilde{\bdd{u}}^0 \|_{1}.
	\end{align}
\end{theorem}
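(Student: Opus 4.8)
The plan is to recognize that \cref{eq:stokes tilde spaces} together with \cref{alg:itpen modified} is nothing but the standard iterated penalty method of \cref{sec:vanilla ip}, applied verbatim to a Stokes-type saddle point problem, except that it is now posed on the boundary trial and test spaces $\tilde{\bdd{X}}_B$ and $\tilde{\bdd{X}}_B^{\dagger}$ and the pressure space $\dive \tilde{\bdd{X}}_B$ in place of $\bdd{X}_D$, $\bdd{X}_D$, and $\dive \bdd{X}_D$. Accordingly, I would re-run the argument that proves \cref{thm:vanilla ip convergence} (carried out in \cref{sec:scip convergence}), tracking how the three structural constants entering its conclusion change under this change of spaces.

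First I would check that the hypotheses \cref{eq:a bounded,eq:a elliptic} transfer to the boundary setting with the \emph{same} constants: since $\tilde{\bdd{X}}_B, \tilde{\bdd{X}}_B^{\dagger} \subset \bdd{X}_D \subset \bdd{H}^1_D(\Omega)$, boundedness holds with the same $M$ and ellipticity with the same $\alpha$. The only constant that genuinely changes is the inf-sup constant: in place of $\beta_X$ from \cref{eq:discrete inf-sup} one uses the boundary inf-sup constant $\tilde\beta := \alpha\beta_X/(M+\alpha)$ supplied by \cref{eq:inf-sup boundary} in \cref{thm:xd decomp and inf-sup}, which is valid both for $\tilde{\bdd{X}}_B$ and for its adjoint $\tilde{\bdd{X}}_B^{\dagger}$.

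With these identifications in hand I would substitute $\beta_X \mapsto \tilde\beta$ into the two conclusions of \cref{thm:vanilla ip convergence} and verify the resulting arithmetic. Writing $\tilde\beta^{-1} = (M+\alpha)/(\alpha\beta_X)$, the outer prefactor $(M+\alpha)/(\alpha\beta_X)$ becomes $(M+\alpha)^2/(\alpha^2\beta_X)$; the two summands in the inverted coefficient become $M(M+\alpha)^3/(\alpha^3\beta_X^2)$ and $\sqrt{d}\,\lambda(M+\alpha)/(\alpha\beta_X)$; and the geometric rate $M(M+\alpha)^2/(\alpha^2\beta_X^2\lambda)$ becomes $M(M+\alpha)^4/(\alpha^4\beta_X^2\lambda)$. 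These are exactly the constants appearing in \cref{eq:scip convergence,eq:scip div convergence}, so the two bounds follow.

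I expect the main obstacle to be not the constant bookkeeping but the \emph{Petrov--Galerkin} nature of the boundary problem: unlike \cref{eq:stokes variational form fem}, the system \cref{eq:stokes tilde spaces} uses distinct trial and test spaces $\tilde{\bdd{X}}_B \neq \tilde{\bdd{X}}_B^{\dagger}$, so the Lax--Milgram argument guaranteeing that each penalty iterate is well defined must be replaced by an inf-sup/non-degeneracy statement for $a_\lambda(\cdot,\cdot)$ between $\tilde{\bdd{X}}_B$ and $\tilde{\bdd{X}}_B^{\dagger}$. Since $\dim \tilde{\bdd{X}}_B = \dim \tilde{\bdd{X}}_B^{\dagger}$, it suffices to prove injectivity, which I would establish using the decomposition $\bdd{X}_D = \bdd{X}_I \oplus \tilde{\bdd{X}}_B = \bdd{X}_I \oplus \tilde{\bdd{X}}_B^{\dagger}$ from \cref{eq:xd decomp} together with the ellipticity \cref{eq:a elliptic}. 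Once well-posedness of the iterates is secured, every remaining step of the \cref{thm:vanilla ip convergence} argument carries over unchanged with $\beta_X$ replaced by $\tilde\beta$.
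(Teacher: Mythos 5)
Your constant bookkeeping is correct, and you have rightly identified the Petrov--Galerkin structure as the crux; but you have mislocated where it bites, and this is a genuine gap. The claim that ``once well-posedness of the iterates is secured, every remaining step of the \cref{thm:vanilla ip convergence} argument carries over unchanged'' is precisely what the paper warns against when it notes that \cref{thm:boundary ip convergence} is \emph{not} an immediate consequence of standard iterated penalty results. Two essential steps of the standard proof use test $=$ trial in a way that no substitution of constants can repair. First, the contraction estimate comes from testing the error relation $a_{\lambda}(\bdd{e}^{n+1}, \bdd{v}) = a(\bdd{e}^{n}, \bdd{v})$ with $\bdd{v} = \bdd{e}^{n+1}$; here $\bdd{e}^{n+1} \in \tilde{\bdd{X}}_B$ while the relation holds only for $\bdd{v} \in \tilde{\bdd{X}}_B^{\dagger}$, so this choice is inadmissible. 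Second, the error relation only shows that $\bdd{e}^{n+1}$ is $a$-orthogonal to the \emph{adjoint} divergence-free space $\tilde{\bdd{N}}_B^{\dagger}$, so the Brenner--Scott estimate $\|\bdd{e}\|_1 \leq \Upsilon \|\dive \bdd{e}\|$ (\cref{lem:app:orthog divefree}) does not apply as stated: its analogue \cref{lem:app:orthog divefree tilde} concerns an orthogonality condition that mixes the two spaces and requires a fresh proof, not a relabeled one.

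The paper's resolution of both difficulties is the Stokes extension machinery of \cref{sec:stokes extension}, which your proposal does not supply a substitute for. Since $\tilde{\bdd{X}}_B$ and $\tilde{\bdd{X}}_B^{\dagger}$ are the fixed-point sets of $\mathbb{S}$ and $\mathbb{S}^{\dagger}$ (\cref{lem:tildexb stokes ext}), one tests with $\bdd{v} = \mathbb{S}^{\dagger} \bdd{e}^{n+1} \in \tilde{\bdd{X}}_B^{\dagger}$ and uses the identities \cref{eq:app:a dagger id} and \cref{eq:app:div dagger id} to recover exactly the terms $a(\bdd{e}^{n+1}, \bdd{e}^{n+1}) + \lambda \|\dive \bdd{e}^{n+1}\|^2$ that the symmetric argument would produce; the same device drives \cref{lem:uniqueness tilde sigma a} and \cref{lem:app:orthog divefree tilde}. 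Your well-posedness sketch is also underspecified for the same reason: decomposing $\bdd{u} = \bdd{u}_I + \bdd{u}^{\dagger}$ via $\bdd{X}_D = \bdd{X}_I \oplus \tilde{\bdd{X}}_B^{\dagger}$ and invoking ellipticity leaves the cross term $a(\bdd{u}, \bdd{u}_I)$, which does not vanish for $\bdd{u} \in \tilde{\bdd{X}}_B$ because $\tilde{\bdd{X}}_B$ is $a$-orthogonal only to $\bdd{N}_I$, not to all of $\bdd{X}_I$; the clean fix is again to test with $\mathbb{S}^{\dagger} \bdd{u}$. In short, your proposal correctly predicts the statement (the constants do arise by replacing $\beta_X$ with $\tilde{\beta}_X = \alpha \beta_X / (M+\alpha)$) but omits the one idea the proof actually turns on: the adjoint Stokes extension operator as a device for transferring test functions between $\tilde{\bdd{X}}_B$ and $\tilde{\bdd{X}}_B^{\dagger}$ without changing the values of $a(\cdot,\cdot)$ or of the divergence.
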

The presence of the adjoint spaces $\tilde{\bdd{X}}_B^{\dagger}$ and $\dive \tilde{\bdd{X}}_B^{\dagger}$ in \cref{eq:stokes tilde spaces} means that \cref{thm:boundary ip convergence} is not an immediate consequence of results for the standard iterated penalty method e.g. \cite[Theorem 13.1.19 \& Theorem 13.2.2]{Brenner08}, and a short proof is therefore given in \cref{sec:scip convergence}. In order to obtain a geometric rate of convergence, the parameter $\lambda$ must be chosen so that $\lambda \geq M(M+\alpha)^2(\alpha \beta_X)^{-2}$ for the standard iterated penalty method \cref{alg:itpen vanilla}, whereas $\lambda$ must be chosen slightly larger with $\lambda \geq M(M+\alpha)^4 (\alpha^2 \beta_X)^{-2}$ for \cref{alg:itpen modified}. 

\begin{algorithm}[htb]
	\caption{Statically Condensed Iterated Penalty Method (SCIP) for \cref{eq:stokes variational form fem}}
	\label{alg:itpen modified}
	\begin{algorithmic}[1]
		\Require{ $\tilde{\bdd{w}}^0 := \bdd{0}$, $\lambda > 0$}
		
		\For{$n = 0, 1, \ldots, $}
		\State{Find $\tilde{\bdd{u}}^n \in \tilde{\bdd{X}}_B$ such that
		\begin{align}
		\label{eq:boundary ip 1}
		a_{\lambda}(\tilde{\bdd{u}}^n, \bdd{v}) = L(\bdd{v}) + (\dive \tilde{\bdd{w}}^n, \dive \bdd{v}) \qquad \forall \bdd{v} \in \tilde{\bdd{X}}_B^{\dagger}.
		\end{align}
	}
		
		\If{stopping criteria is met} 
		\State{\textbf{break}}
		\EndIf
		
		\State{$\tilde{\bdd{w}}^{n+1} := \tilde{\bdd{w}}^n - \lambda \tilde{\bdd{u}}^n$}
		\EndFor
		
		\State{For each $K \in \mathcal{T}$, find $(\bdd{u}_{K}, q_{K}) \in \bdd{X}_I(K) \times \dive \bdd{X}_I(K)$ such that
			\begin{subequations}
				\begin{alignat*}{2}
				a_K( \bdd{u}_{K}, \bdd{v} ) - (q_{K}, \dive \bdd{v})_K  &= (\bdd{f}, \bdd{v})_K - a_K( \tilde{\bdd{u}}^n, \bdd{v} ) \qquad & &\forall \bdd{v} \in \bdd{X}_I(K), \\
				- (r, \dive \bdd{u}_{K})_K &= 0 \qquad & & \forall r \in \dive \bdd{X}_I(K).
				\end{alignat*}
		\end{subequations}}
		
		\State{\Return{ $\bdd{u}_{X}^{n} := \tilde{\bdd{u}}^n + \sum_{K \in \mathcal{T}} \bdd{u}_K$, $q_{X}^{n} := \dive \tilde{\bdd{w}}^n + \sum_{K \in \mathcal{T}} q_K$ }}

	\end{algorithmic}
\end{algorithm}

\subsection{Matrix Form of SCIP}
\label{sec:implementation details}
In order to facilitate the implementation of the SCIP method, we now derive the matrix form of \cref{alg:itpen modified}.

\textbf{Stiffness Matrices and Load Vectors.} We first consider the bilinear forms and load vectors in line 2 of \cref{alg:itpen modified}. Let $\bdd{E}_K$ and $\bdd{G}_K$ be defined and partitioned as in \cref{sec:mip subspace decomp}, and let $\bdd{C}_K$ correspond to the form $(\dive \cdot, \dive \cdot)_K$, partitioned analogously, where we use the superscript ``$(K)$" to explicitly indicate the dependence of matrix and vector sub-blocks on the element $K$:
\begin{align*}
	(\dive \bdd{u}, \dive \bdd{v})_K =  \begin{bmatrix}
		\vec{v}_{B,K} \\ \vec{v}_{I,K}
	\end{bmatrix}^T \begin{bmatrix}
		\bdd{C}_{BB}^{(K)} & \bdd{C}_{BI}^{(K)} \\
		\bdd{C}_{IB}^{(K)} & \bdd{C}_{II}^{(K)}
	\end{bmatrix} \begin{bmatrix}
		\vec{u}_{B,K} \\ \vec{u}_{I,K}
	\end{bmatrix} \qquad \forall \bdd{u}, \bdd{v} \in \bdd{X}_D.
\end{align*}
Likewise, let $\vec{L}_K$ denote the element load vector satisfying corresponding to the data $\bdd{f}$ and $\bdd{g}$:
\begin{align*}
	 (\bdd{f}, \bdd{v})_K + (\bdd{g}, \bdd{v})_{\Gamma_N \cap \partial K} = L_K(\bdd{v}) = \begin{bmatrix}
	 	\vec{v}_{B,K} \\
	 	\vec{v}_{I,K}
	 \end{bmatrix}^T \vec{L}_K = \begin{bmatrix}
		\vec{v}_{B,K} \\
		\vec{v}_{I,K}
	\end{bmatrix}^T \begin{bmatrix}
		\vec{L}_B^{(K)} \\
		\vec{L}_I^{(K)}
	\end{bmatrix} \qquad \forall \bdd{v} \in \bdd{X}_D.
\end{align*} 
With the element matrices and load vectors in hand, we define
\begin{align*}
	\tilde{\bdd{E}}_K &:= \bdd{E}_{BB}^{(K)} + \bdd{E}_{BI}^{(K)} \bdd{S}_K + \bdd{T}_K^T \bdd{E}_{IB}^{(K)} + \bdd{T}_K^T \bdd{E}_{II}^{(K)} \bdd{S}_K, \\
	\tilde{\bdd{C}}_K &:= \bdd{C}_{BB}^{(K)} + \bdd{C}_{BI}^{(K)} \bdd{S}_K + \bdd{T}_K^T \bdd{C}_{IB}^{(K)} + \bdd{T}_K^T \bdd{C}_{II}^{(K)} \bdd{S}_K, \\
	\tilde{\bdd{A}}_K &:= \tilde{\bdd{E}}_K + \lambda 	\tilde{\bdd{C}}_K, \\
	\vec{\tilde{L}}_K &:= \vec{L}_B^{(K)} + \bdd{T}_K^T \vec{L}_I^{(K)},
\end{align*}
where $\bdd{S}_K$ and $\bdd{T}_K$ are defined in \cref{eq:sk def,eq:zk def}
Thanks to \cref{lem:tildexb stokes ext}, we have the following relations for all $\tilde{\bdd{u}}\in \tilde{\bdd{X}}_B$ and $\tilde{\bdd{v}} \in \ \tilde{\bdd{X}}_B^{\dagger}$:
\begin{align*}
a_{\lambda,K}(\tilde{\bdd{u}}, \tilde{\bdd{v}}) &= \vec{v}_{B,K}^T \tilde{\bdd{A}}_K \vec{u}_{B,K}, \ \ 
L_K(\tilde{\bdd{v}}) = \vec{v}_{B,K}^T \vec{\tilde{L}}_K,  \ \ (\dive \tilde{\bdd{u}}, \dive \tilde{\bdd{v}})_K = \vec{v}_{B,K}^T \tilde{\bdd{C}}_K \vec{u}_{B,K}.
\end{align*}
The local matrices $\tilde{\bdd{A}}_K$ and $\tilde{\bdd{C}}_K$ and the load vector $\tilde{L}_K$ are sub-assembled in the usual way to obtain the global matrices $\tilde{\bdd{A}}$ and $\tilde{\bdd{C}}$ and the global load vector $\vec{\tilde{L}}$. Given $\tilde{\bdd{w}}^n \in \tilde{\bdd{X}}_B$, line 2 of \cref{alg:itpen modified} corresponds to line 2 of \cref{alg:itpen matrix}. We again emphasize that line 2 of \cref{alg:itpen matrix} consists of inverting a system of $\mathcal{O}(|\mathcal{T}| p^{d-1})$ unknowns at each iteration, while lines 2-3 of the standard iterated penalty method consists of inverting a system of $\mathcal{O}(|\mathcal{T}| p^d)$ unknowns at each iteration.

\textbf{Local Stokes Systems.} For each $K \in \mathcal{T}$, the element-wise system in line 8 of \cref{alg:itpen modified} corresponds to line 8 of \cref{alg:itpen matrix}. The associated systems can be solved in parallel using a direct solver. In particular, observe that the interior degrees of freedom are not updated during each iteration.
 
\textbf{Solution Representation.} The final step in line 9 of \cref{alg:itpen modified} entails expressing the solution $\bdd{u}_{X}^{n}$ and $q_{X}^{n}$ with respect to some bases. For simplicity, we give the degrees of freedom on each element $K \in \mathcal{T}$. For the velocity $\bdd{u}_{X}^{n}$, it is convenient to use the original basis for $\bdd{X}_D$ restricted to $K$. By \cref{lem:tildexb matrix characterization}, we have
\begin{align*}
\bdd{u}_{X}^{n}|_{K} &= \vec{\Phi}_{B,K}^T \vec{u}_{B,K}^n + \vec{\Phi}_{I,K}^T \left( \vec{u}_{K} + \bdd{S}_K \vec{u}_{B,K}^n \right), \\ \tilde{\bdd{w}}^n|_{K} &= \vec{\Phi}_{B,K}^T \vec{w}_{B,K}^n + \vec{\Phi}_{I,K}^T \bdd{S}_K \vec{w}_{B,K}^n.
\end{align*}
For the pressure $q$, we take $\{ \psi_{B,K} \} \subset \mathcal{P}_{p-1}(K)$ to be any linearly independent set of $\dim \mathcal{P}_{p-1}(K) - \dim \dive \bdd{X}_I(K)$ functions such that $\{ \psi_{\iota,K} \} \cup \{ \psi_{B,K} \}$ is a basis for $\mathcal{P}_{p-1}(K)$. Then, there exists a matrix $\bdd{H}_K$ such that 
\begin{align*}
\dive \bdd{v}|_{K} = \begin{bmatrix}
\psi_{B,K} \\ \psi_{\iota,K}
\end{bmatrix}^T \bdd{H}_K \vec{v}_K \qquad \forall \bdd{v} \in \bdd{X}_D,
\end{align*}
and so line 9 of \cref{alg:itpen matrix} corresponds to line 9 of \cref{alg:itpen modified}.

\begin{algorithm}[htb]
	\caption{Matrix Form of the SCIP Method}
	\label{alg:itpen matrix}
	\begin{algorithmic}[1]
		\Require{ $\vec{w}_{B}^0 = \vec{0}$, $\lambda > 0$}
		
		\For{$n = 0, 1, \ldots, $}
		\State{Solve $\tilde{\bdd{A}} \vec{u}_B^n = \vec{\tilde{L}} + \tilde{\bdd{C}} \vec{w}_B^{n}$}
		
		\If{stopping criteria is met} 
		\State{\textbf{break}}
		\EndIf
		
		\State{$\vec{w}_B^{n+1} := \vec{w}_B^n - \lambda \vec{u}_B^n$}
		\EndFor
		
		\State{For each $K \in \mathcal{T}$, solve $\begin{bmatrix}
				\bdd{E}_{II}^{(K)} & (\bdd{G}_{\iota I}^{(K)})^T \\
				\bdd{G}_{\iota I}^{(K)} & \bdd{0}
			\end{bmatrix} \begin{bmatrix}
				\vec{u}_{K} \\ \vec{q}_K
			\end{bmatrix} = \begin{bmatrix}
				\vec{L}_I^{(K)} - \bdd{E}_{IB}^{(K)} \vec{u}_{B,K}^n \\
				\vec{0}
			\end{bmatrix}$.}
		
		\State{\Return{ $\begin{bmatrix}
					\vec{u}_{B,K}^n \\ \vec{u}_K + \bdd{S}_K \vec{u}_{B,K}^n
				\end{bmatrix}$ and $ \bdd{H}_K \begin{bmatrix}
			\vec{w}_{B,K}^n \\
			\bdd{S}_K \vec{w}_{B,K}^n
		\end{bmatrix} + \begin{bmatrix}
		\vec{0} \\ \vec{q}_K
	\end{bmatrix}$, $K \in \mathcal{T}$. }}

	\end{algorithmic}
\end{algorithm}

\subsection{Generalization to Other Finite Elements}

The foregoing discussion readily extends to any conforming finite element space $\bdd{X}_D$ such that $\bdd{X}_I$ defined by \cref{eq:interior space def} is nonempty. In particular, all of the results of the current section, \cref{sec:stokes extension}, and \cref{sec:scip convergence} are valid with identical proofs, where the inf-sup constant $\beta_X$ \cref{eq:discrete inf-sup} now corresponds to the pair $\bdd{X}_D \times \dive \bdd{X}_D$, which may depend on $h$ and $p$. Of course, implementing the SCIP method requires an explicit characterization of the space $\dive \bdd{X}_I(K)$, which may not be known or available in all cases. 

\section{Numerical Examples}
\label{sec:numerics}

\tikzset{boximg/.style={remember picture,red,thick,draw,inner sep=0pt,outer sep=0pt}}

We now present two numerical examples highlighting the convergence properties of the SCIP algorithm applied to the 2D Scott-Vogelius elements with $p \geq 4$. As shown in \cref{thm:optimal approx}, these elements are uniformly inf-sup stable in the mesh size $h$ and the polynomial degree $p$ and possess optimal approximation properties on a wide class of meshes. Consequently, estimate \cref{eq:scip convergence} ensures that the convergence of the SCIP method (in exact arithmetic) will not degrade as the mesh is refined or as the polynomial degree is increased. In particular, this choice of element allows us to examine the performance of SCIP independently of problems arising from element stability.

\subsection{Implementation Details}
\label{sec:impl notes}

We first detail how the Bernstein basis may be used to construct a basis for $\dive \bdd{X}_I(K)$, summarizing the construction in \cite[\S 6.1.1]{AinCP19StokesIII}. Let $K \in \mathcal{T}$ and let $\{B_{\alpha}^{k}\}_{\alpha \in \mathcal{I}^k}$ denote the Bernstein polynomials on $K$:
\begin{align*}
	B_{\alpha}^{k} = \frac{k!}{\alpha_1! \alpha_2! \alpha_3!} \lambda_1^{\alpha_1} \lambda_2^{\alpha_2} \lambda_3^{\alpha_3},
\end{align*}
where $\mathcal{I}^k := \{ \alpha \in \mathbb{Z}_+^3 : |\alpha| = k \}$ and $\{ \lambda_i \}_{i=1}^{3}$ are the barycentric coordinates on $K$. The set $\{ B_{\alpha}^{k} \}_{\alpha \in \mathcal{I}_0^{k}}$, where $\mathcal{I}_0^{k} := \{ \alpha \in \mathcal{I} : \alpha_i < k, \ 1 \leq i \leq 3 \}$ then consists of all degree $k$ polynomials that vanish at the vertices of $K$. Fix any $\gamma \in \mathcal{I}_0^{p-1}$; then, the set $\{ B^{p-1}_{\alpha} - B^{p-1}_{\gamma} : \alpha \in \mathcal{I}_0^{p-1} \setminus \{ \gamma \} \}$ is a basis for $\dive \bdd{X}_I(K)$ thanks to \cref{eq:divxi character} since all Bernstein polynomials have the same average value. As we are using the Bernstein basis for $X_D$ as well, we can then use the algorithms in \cite{Ain11} to compute the element matrices $\bdd{E}_K$, $\bdd{G}_K$, and $\bdd{C}_K$ in $\mathcal{O}(p^4)$ operations and the element load vector $\vec{L}_K$ in $\mathcal{O}(p^3)$ operations. 

For consistency across different flow problems, we invert the sparse matrix $\tilde{\bdd{A}}$ in line 2 of \cref{alg:itpen matrix} using the \texttt{SparseLU} solver in Eigen \cite{eigenweb}, while all local element matrices are inverted using Eigen's \texttt{FullPivLU} solver.

\subsection{Kovasznay Flow}

We first consider Oseen flow \cref{eq:oseen flow} on the rectangular domain $\Omega = (-0.5, 2) \times (-0.5, 1.5)$ with viscosity $\nu = 10^{-1}$, $\bdd{f} = \bdd{0}$, and 
\begin{align*}
	\bdd{w}(x, y) = \begin{bmatrix}
		1 - e^{\kappa x} \cos (2 \pi y) \\
		\frac{\kappa}{2\pi} e^{\kappa x} \sin(2 \pi y)
	\end{bmatrix}, 
\end{align*}
where $\kappa = \frac{1}{2\nu} - \sqrt{\frac{1}{4\nu^2} + 4\pi^2}$. We additionally impose $\bdd{u} = \bdd{w}$ on $\Gamma$. The exact solution to this problem, originally derived by Kovasznay \cite{Kovasznay48} in the context of Navier-Stokes flow, is
\begin{align}
	\label{eq:kov exact sol}
	\bdd{u}(x, y) = \bdd{w}(x, y) \quad \text{and} \quad q(x, y) = -\frac{1}{2} e^{2 \kappa x} - \bar{q}, \qquad (x, y) \in \Omega,
\end{align}
where $\bar{q}$ is the average value of $-\frac{1}{2} e^{2 \kappa x}$ on $\Omega$.

\begin{figure}[htb]
	\centering
	\begin{subfigure}[b]{0.48\linewidth}
		\centering
		\includegraphics[width=0.85\linewidth]{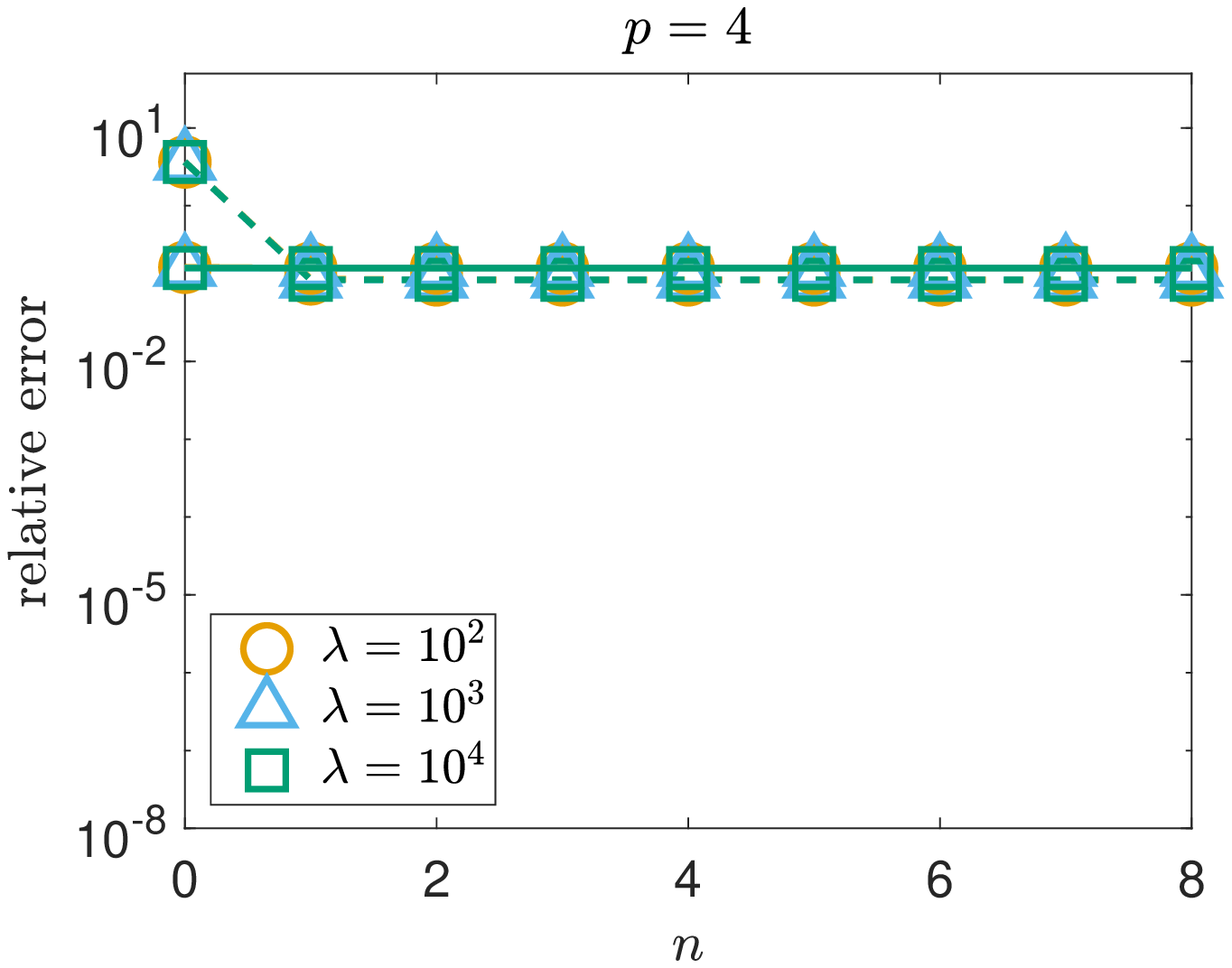}
		\caption{\label{fig:kov iter info p4}}
	\end{subfigure}
	\hfill
	\begin{subfigure}[b]{0.48\linewidth}
		\centering
		\includegraphics[width=0.85\linewidth]{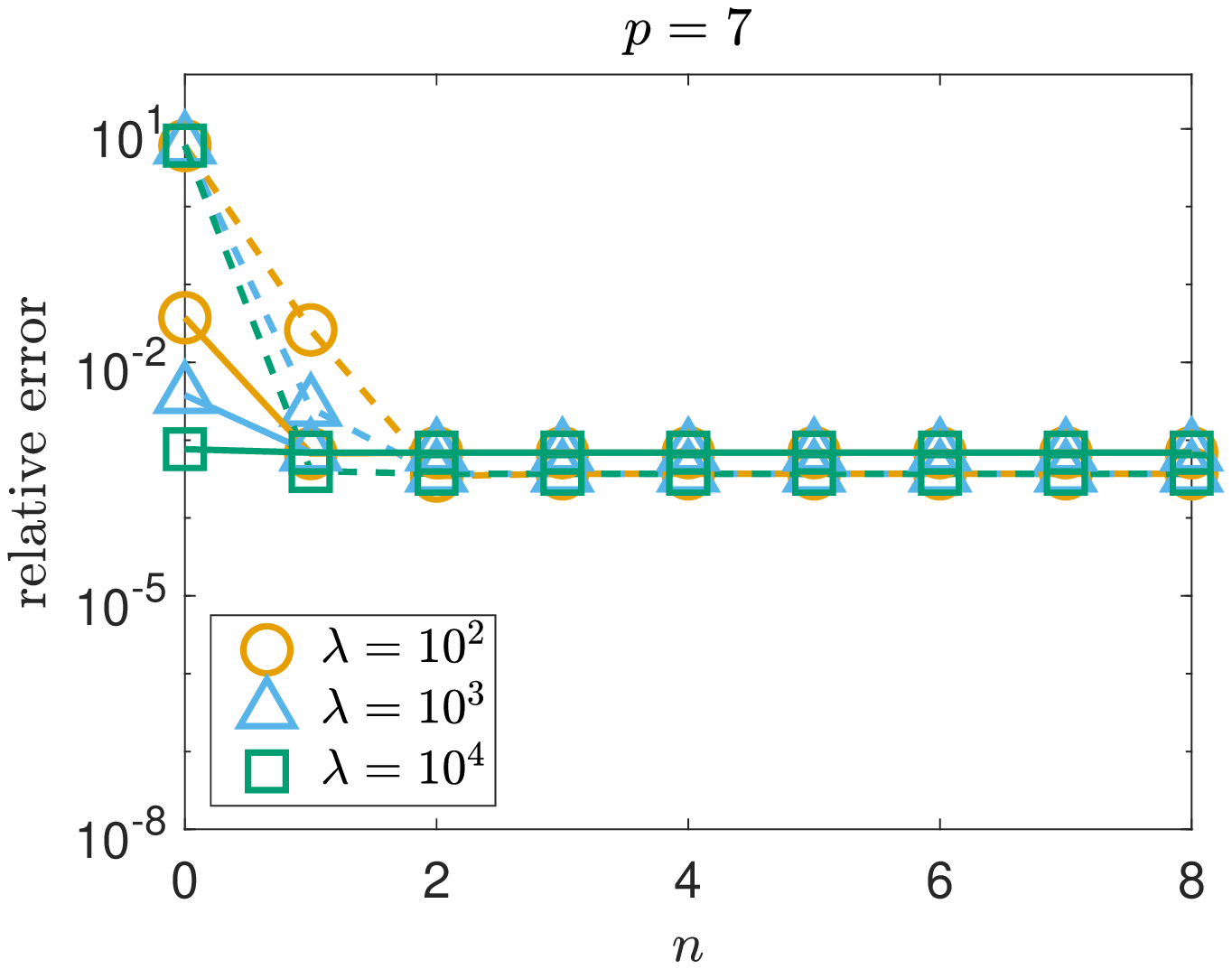}
		\caption{\label{fig:kov iter info p7}}
	\end{subfigure}
	\\
	\begin{subfigure}[b]{0.48\linewidth}
		\centering
		\includegraphics[width=0.85\linewidth]{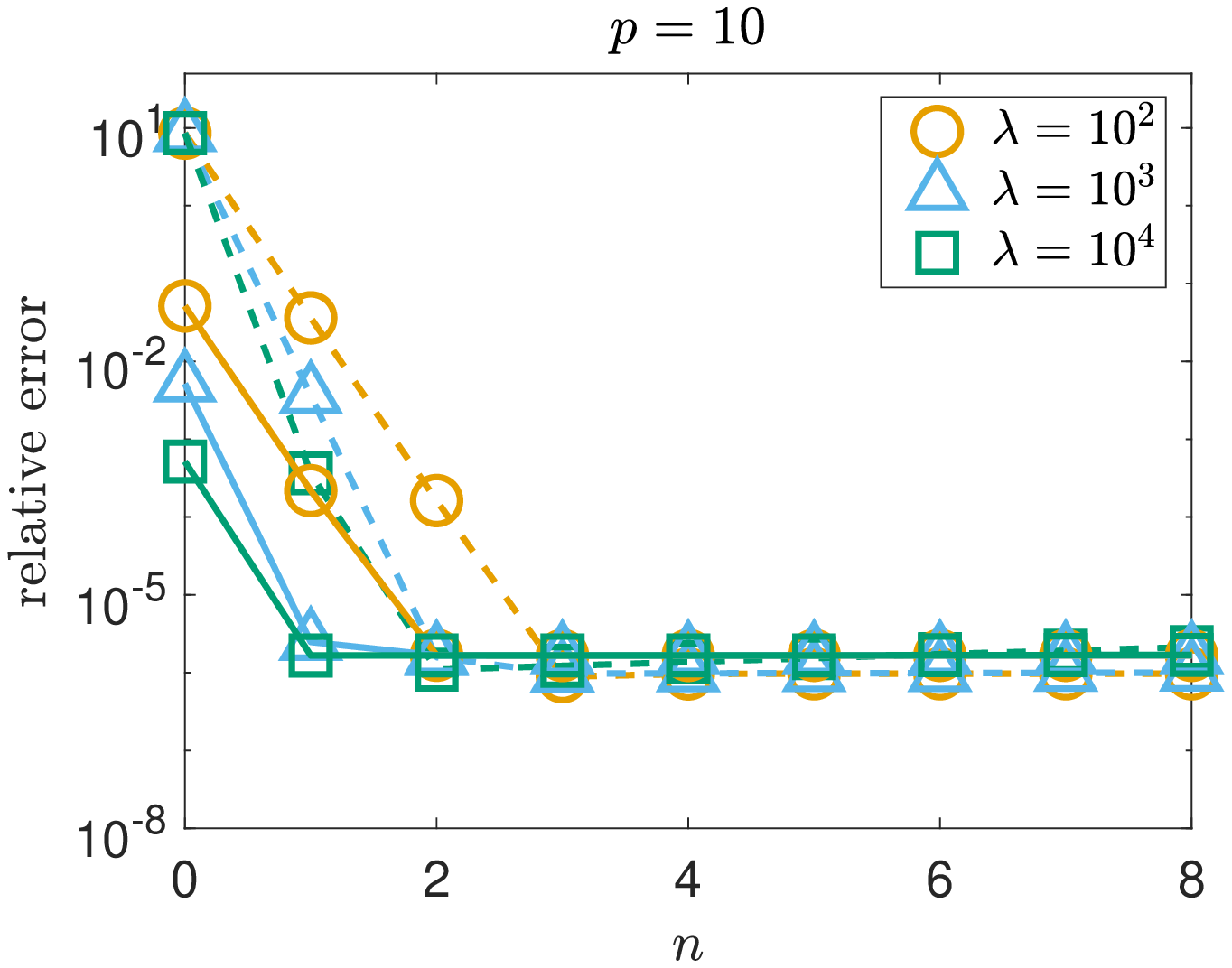}
		\caption{\label{fig:kov iter info p10}}
	\end{subfigure}
	\hfill
	\begin{subfigure}[b]{0.48\linewidth}
		\centering
		\includegraphics[width=0.85\linewidth]{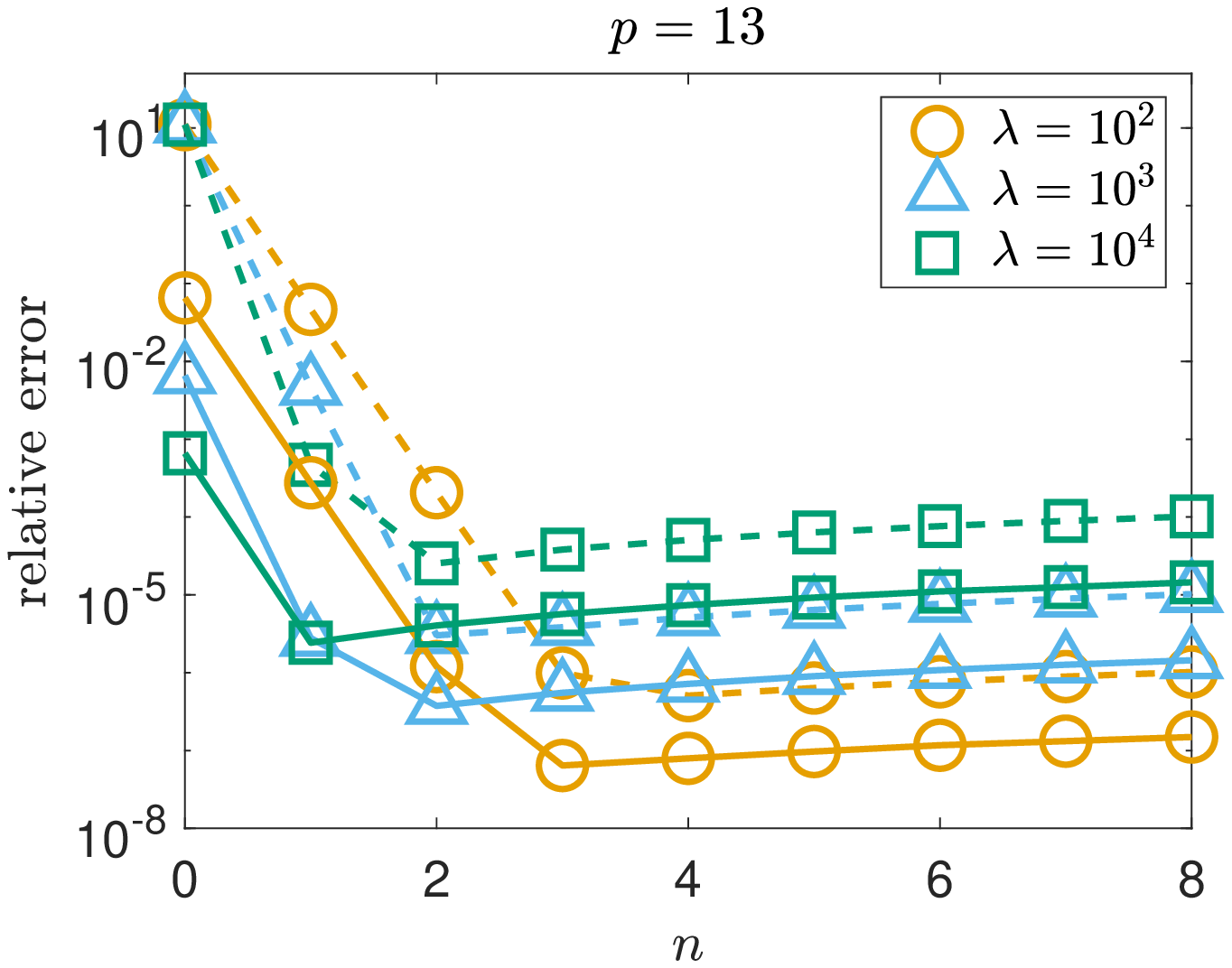}
		\caption{\label{fig:kov iter info error p13}}
	\end{subfigure}
	\caption{Relative velocity error (solid lines) and pressure error (dashed lines) of the solution $(\bdd{u}_X^n, p_X^n)$ given by the SCIP method with (a) $p=4$, (b) $p=7$, (c) $p=10$, and (d) $p=13$ applied to the Kovasznay flow problem with $\nu=10^{-1}$. \label{fig:kov iter erro info}}
\end{figure}

We begin by examining the performance of SCIP using the 4x4 criss-cross mesh in \cref{fig:kov p10}. For $p \in \{4, 7, 10, 13\}$, $\lambda \in \{ 10^2, 10^3, 10^4 \}$, and $0 \leq n \leq 8$, we terminate SCIP after $n$ steps and display the relative velocity error $\|\bdd{u} - \bdd{u}_X^n\|_{1} / \|\bdd{u}\|_1$ and relative pressure error $\|q - q_X^n\| / \|q\|$ in \cref{fig:kov iter erro info}. The relative errors are in agreement with \cref{thm:boundary ip convergence}. The errors decrease until the error in the SCIP method is smaller than the discretization error, at which point the errors level off. Additionally, the pressure errors generally require one to two more iterations of SCIP to level off compared to the velocity errors.

\Cref{fig:kov iter info p} shows the behavior of the velocity and pressure errors versus the polynomial degree on a log-linear scale so that a straight line corresponds to the expected exponential convergence in $p$ since the exact solution \cref{eq:kov exact sol} is analytic \cite{Schwab98}. Observe that, while we indeed see exponential convergence for $p \in \{ 1,\ldots, 10 \}$, for higher values of $p$ there is a loss in accuracy which we attribute to the conditioning of the Bernstein basis.

\begin{figure}[htb]
	\centering
	\begin{subfigure}[b]{0.45\linewidth}
			\includegraphics[width=\linewidth]{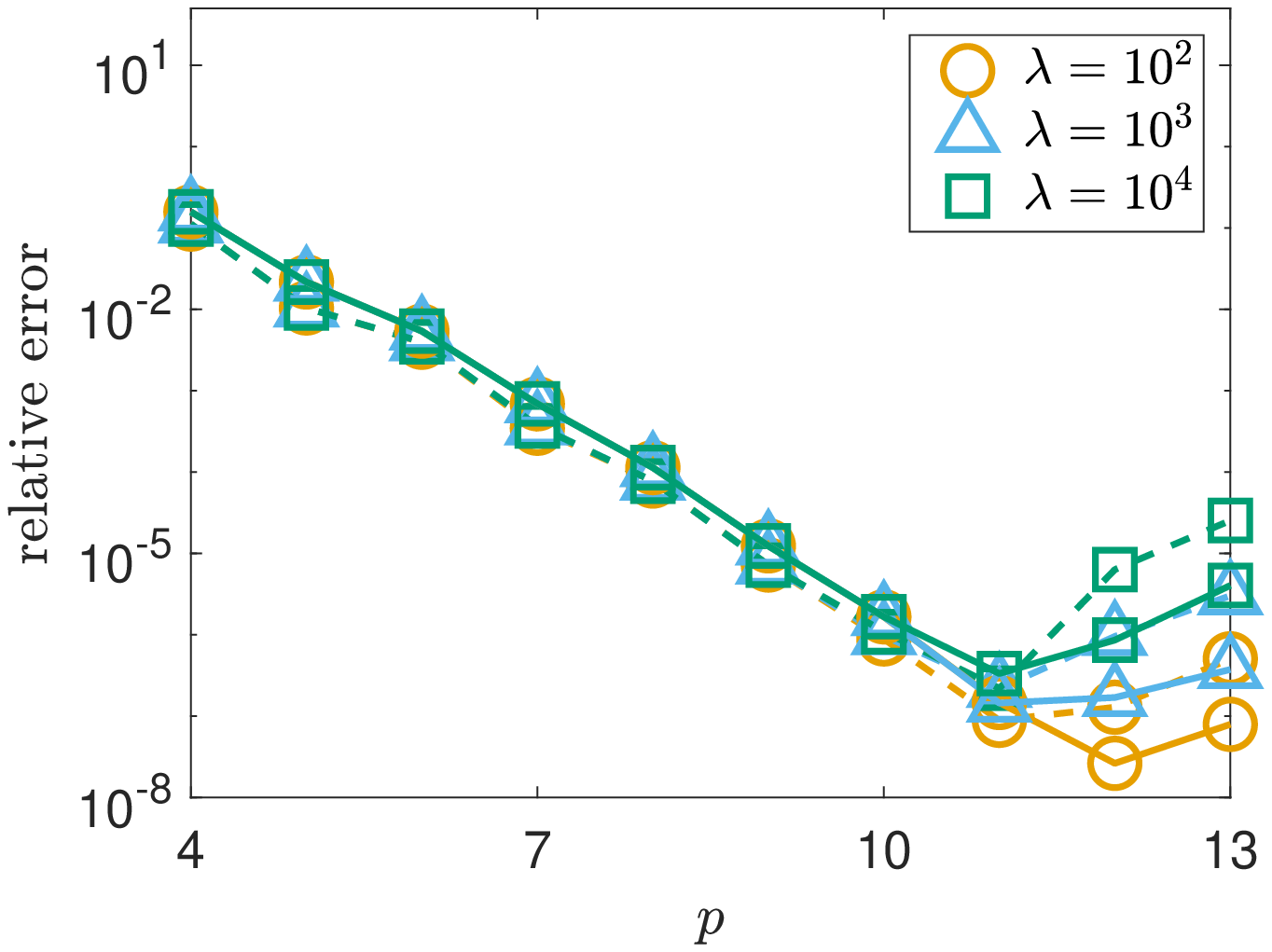}
			\caption{\label{fig:kov iter info p}}
	\end{subfigure}
	\hfill
	\begin{subfigure}[b]{0.53\linewidth}
		\centering
		\includegraphics[height=4.725cm]{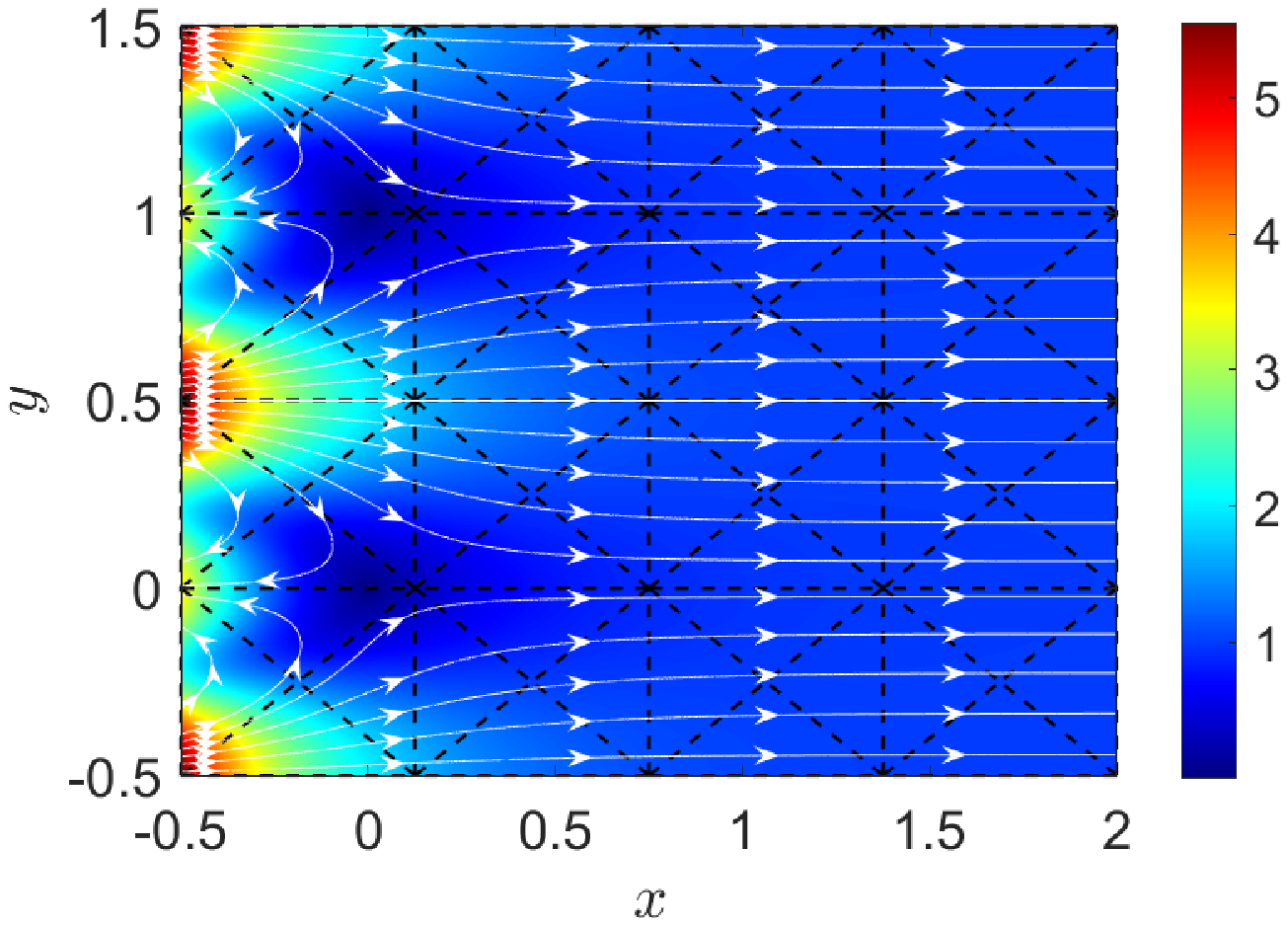}
		\caption{\label{fig:kov p10}}
	\end{subfigure}
	\caption{SCIP approximation to the Kovasznay flow problem with $\nu = 10^{-1}$. (a) Smallest relative velocity error (solid lines) and pressure error (dashed lines) of the solution $(\bdd{u}_X^n, p_X^n)$ over 8 iterations and (b) 4x4 criss-cross mesh (dashed lines) and velocity streamlines (solid lines) with $|\bdd{u}_X^n|$ background color for the $p = 10$ and $\lambda = 10^{3}$ approximation after 8 iterations.}
\end{figure}

The divergence of the SCIP approximation $\| \dive \bdd{u}_X^n \|$ is another important quantity that, according to \cref{thm:boundary ip convergence}, converges exponentially fast as the number of iterations increases. The values of $\| \dive \bdd{u}_X^n \|$ for the same values of $n$ and $p$ in \cref{fig:kov iter erro info} are displayed in \cref{fig:kov iter info}, where in agreement with \cref{eq:scip div convergence}, $\|\dive \bdd{u}_X^n\|$, and hence $\| \dive \tilde{\bdd{u}}^n \|$, decays exponentially fast in $n$, and the rate of decay is greater for larger values of $\lambda$. We observe some degradation of the results when $p > 10$, which we again attribute to roundoff issues with the Bernstein basis. The approximation obtained after 8 iterations of SCIP with $p=10$ and $\lambda=10^{3}$ is displayed in \cref{fig:kov p10}.

\begin{figure}[htb]
	\centering
	\begin{subfigure}[b]{0.48\linewidth}
		\centering
		\includegraphics[width=0.85\linewidth]{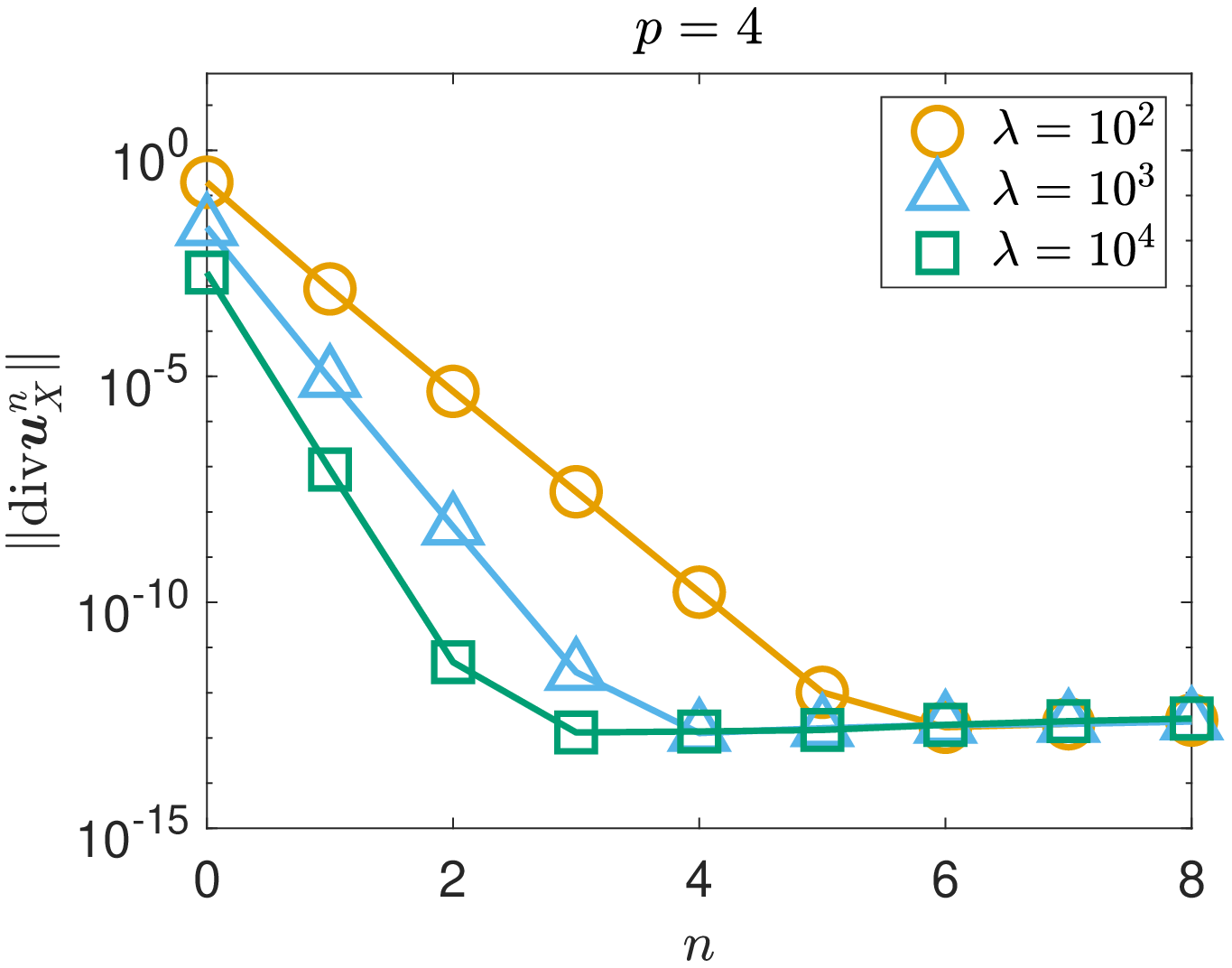}
		\caption{\label{fig:kov iter info div p4}}
	\end{subfigure}
	\hfill
	\begin{subfigure}[b]{0.48\linewidth}
		\centering
		\includegraphics[width=0.85\linewidth]{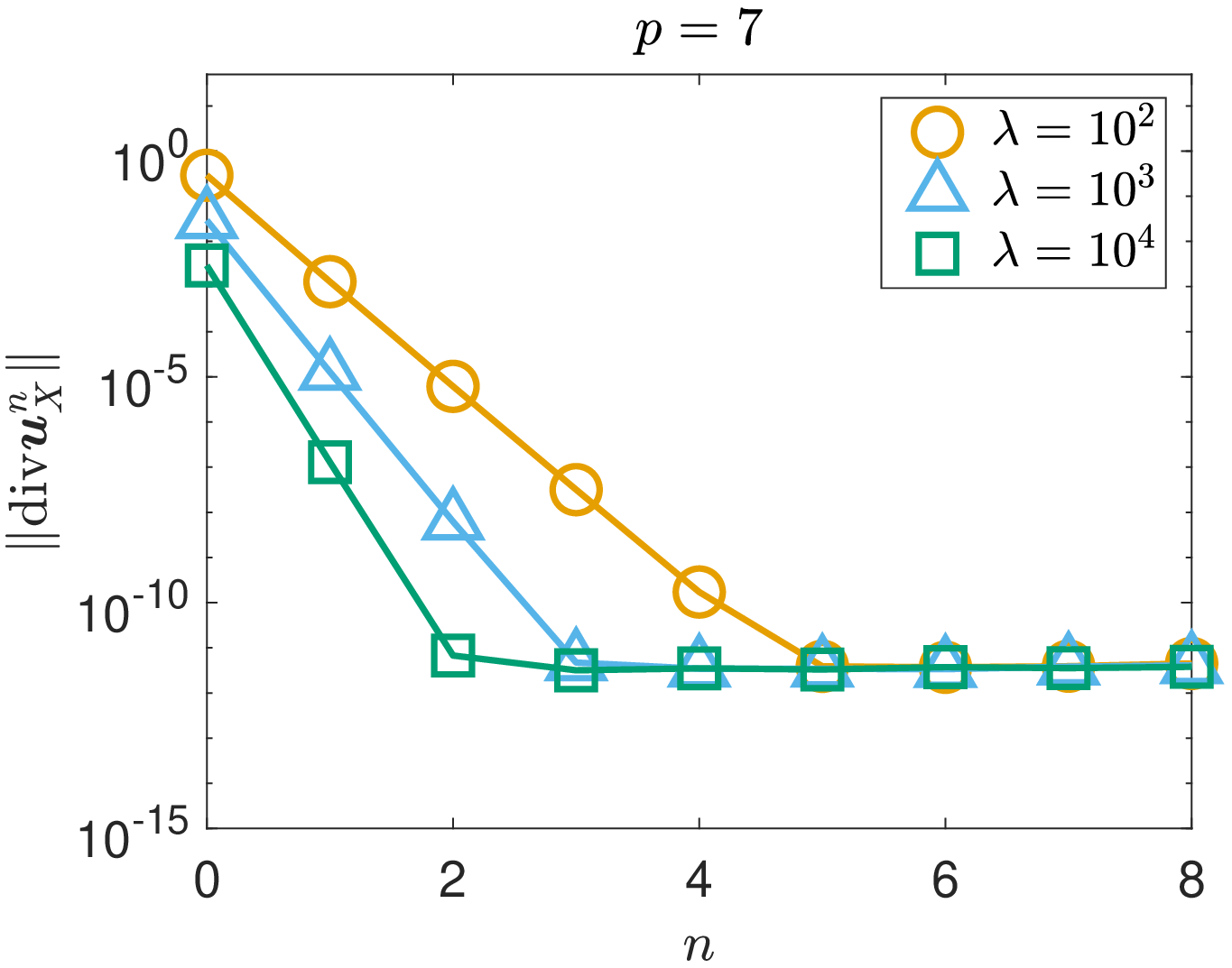}
		\caption{\label{fig:kov iter info div p7}}
	\end{subfigure}
	\\
	\begin{subfigure}[b]{0.48\linewidth}
		\centering
		\includegraphics[width=0.85\linewidth]{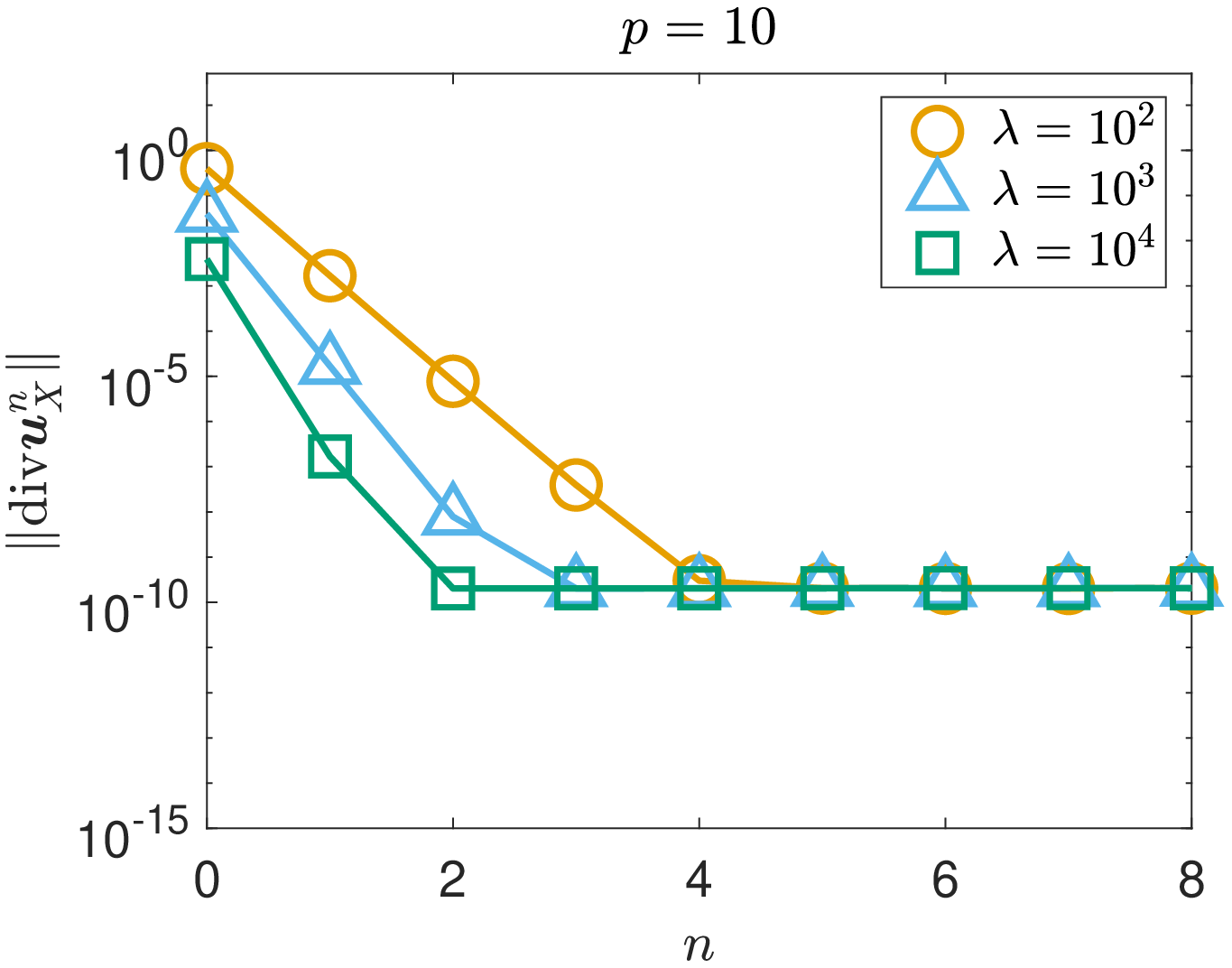}
		\caption{\label{fig:kov iter info div p10}}
	\end{subfigure}
	\hfill
	\begin{subfigure}[b]{0.48\linewidth}
		\centering
		\includegraphics[width=0.85\linewidth]{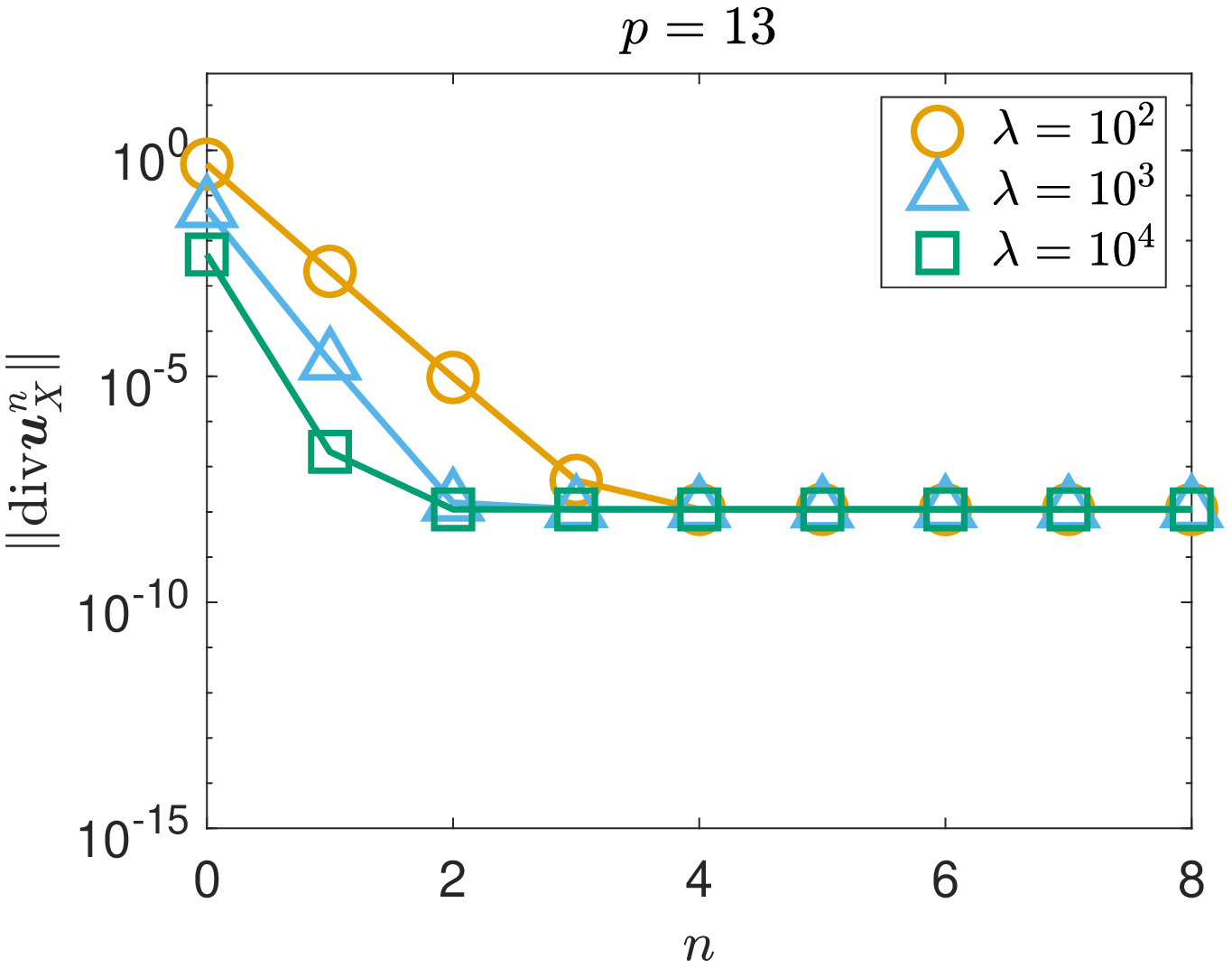}
		\caption{\label{fig:kov iter info div p13}}
	\end{subfigure}
	\caption{Values of $\|\dive \bdd{u}_X^n\|$ with(a) $p=4$, (b) $p=7$, (c) $p=10$, and (d) $p=13$ from the SCIP method applied to the Kovasznay flow problem with $\nu=10^{-1}$. \label{fig:kov iter info}}
\end{figure}

\subsection{Moffatt Eddies}

We now consider an example of Stokes flow due to Moffatt \cite{Moffatt64}, which is a common benchmark for high order methods as it contains features on many scales. Let $\Omega$ be the wedge with a fixed mesh as shown in \cref{fig:moffatt mesh} with the following boundary conditions:
\begin{align*}
	\bdd{u}(x, 0) = \begin{bmatrix}
		1 - x^2 \\ 0
	\end{bmatrix}, \qquad -1 \leq x \leq 1, \quad \text{and} \quad \bdd{u} = \bdd{0} \text{ on } \Gamma \setminus (-1, 1) \times \{0\}.
\end{align*}
The velocity contains an infinite cascade of eddies, each of which is about 400 times weaker than the previous one, while the pressure has an infinite cascade of singularities, starting at $(\pm 1, 0)$. The combination of these two features makes this a challenging test problem.

\begin{figure}[htb]
	\centering
	\begin{subfigure}[b]{0.48\linewidth}
		\centering
		\begin{tikzpicture}[boximg]
			\node[anchor=south west] (zoom0)
			{\includegraphics[width=0.9\linewidth]{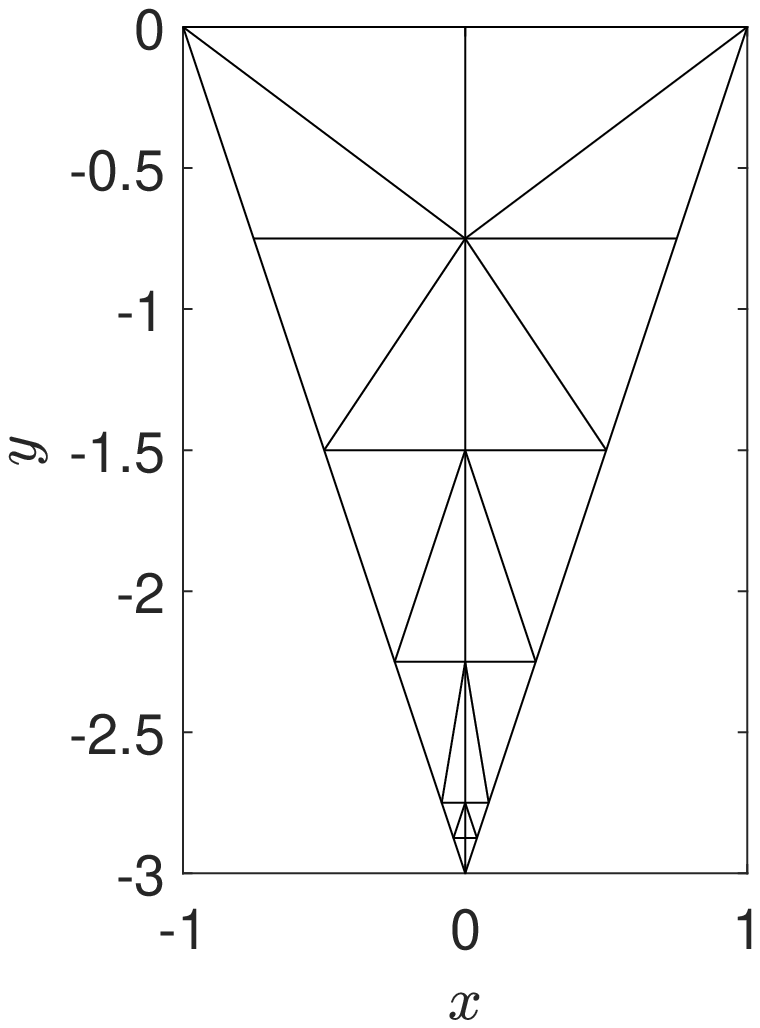}};
			\begin{scope}[x=(zoom0.south east),y=(zoom0.north west)]
				\node[draw,minimum height=0.27cm,minimum width=.4cm] (B0) at (0.5175,0.18) {};
			\end{scope}
		\end{tikzpicture}
		\caption{\label{fig:moffatt mesh}}
	\end{subfigure}
	\hfill
	\begin{subfigure}[b]{0.48\linewidth}
		\centering
		\includegraphics[width=0.9\linewidth]{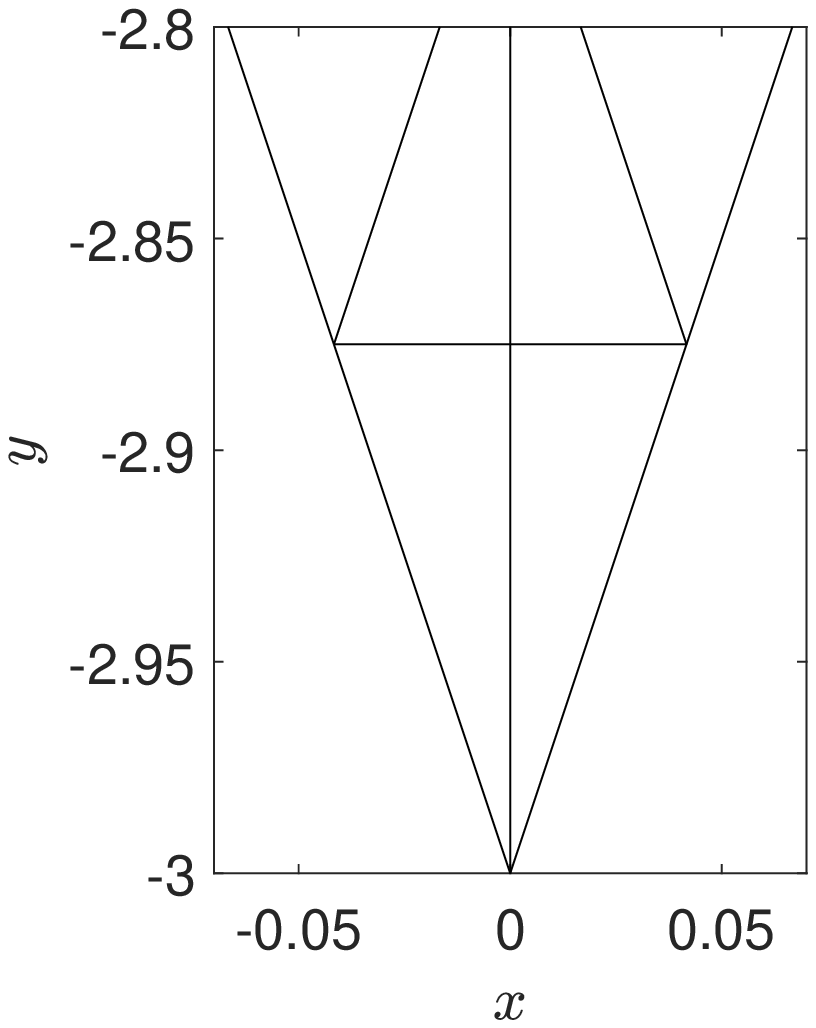}
		\caption{\label{fig:moffatt mesh zoom}}
	\end{subfigure}
	\caption{(a) Computational mesh consisting of 22 elements and (b) zoom for the Moffatt problem.}
\end{figure}

The numerical solution obtained after 8 iterations of the SCIP method with $p=10$ and $\lambda=10^{3}$ on the computational mesh in \cref{fig:moffatt mesh} satisfies  $\|\dive \bdd{u}_{X}^n\| =$ 6.8e-11 and is shown in \cref{fig:moffatt p10}. 
Observe that the method nicely captures the profile of the pressure, as well as the three eddies. In \cref{fig:moffatt p10 zoom}, we zoom in on the numerical solution and observe that an additional two eddies are resolved, with $|\bdd{u}_X^n|$ being on the order of $10^{-11}$. Thus, the method is able to resolve all eddies up to the order of $\|\dive \bdd{u}_{X}^n\|$ and capture the pressure profile without the need to use a priori knowledge of the solution.

\begin{figure}[htb]
	\centering
	\begin{subfigure}[b]{0.48\linewidth}
		\centering
		\includegraphics[width=\linewidth]{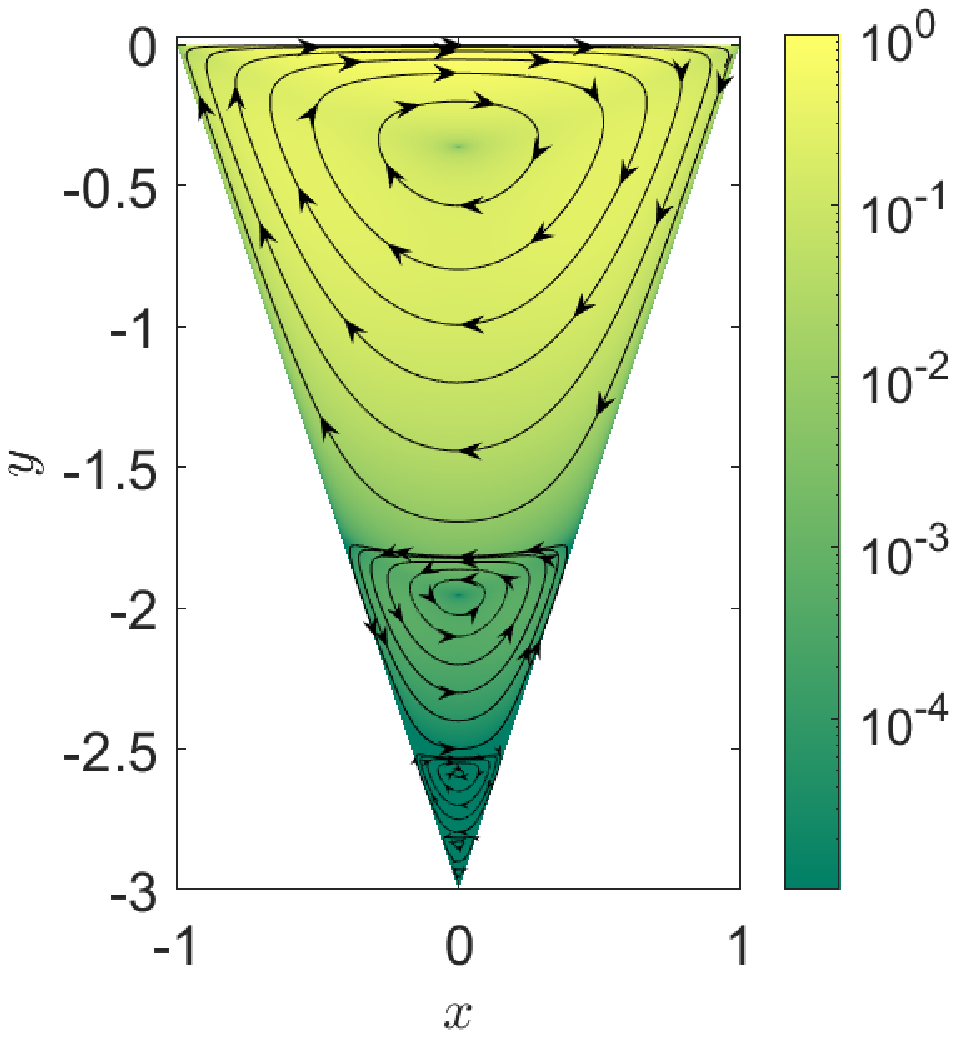}
		\caption{\label{fig:moffat p10 velocity}}
	\end{subfigure}
	\hfill
	\begin{subfigure}[b]{0.48\linewidth}
		\centering
		\includegraphics[width=\linewidth]{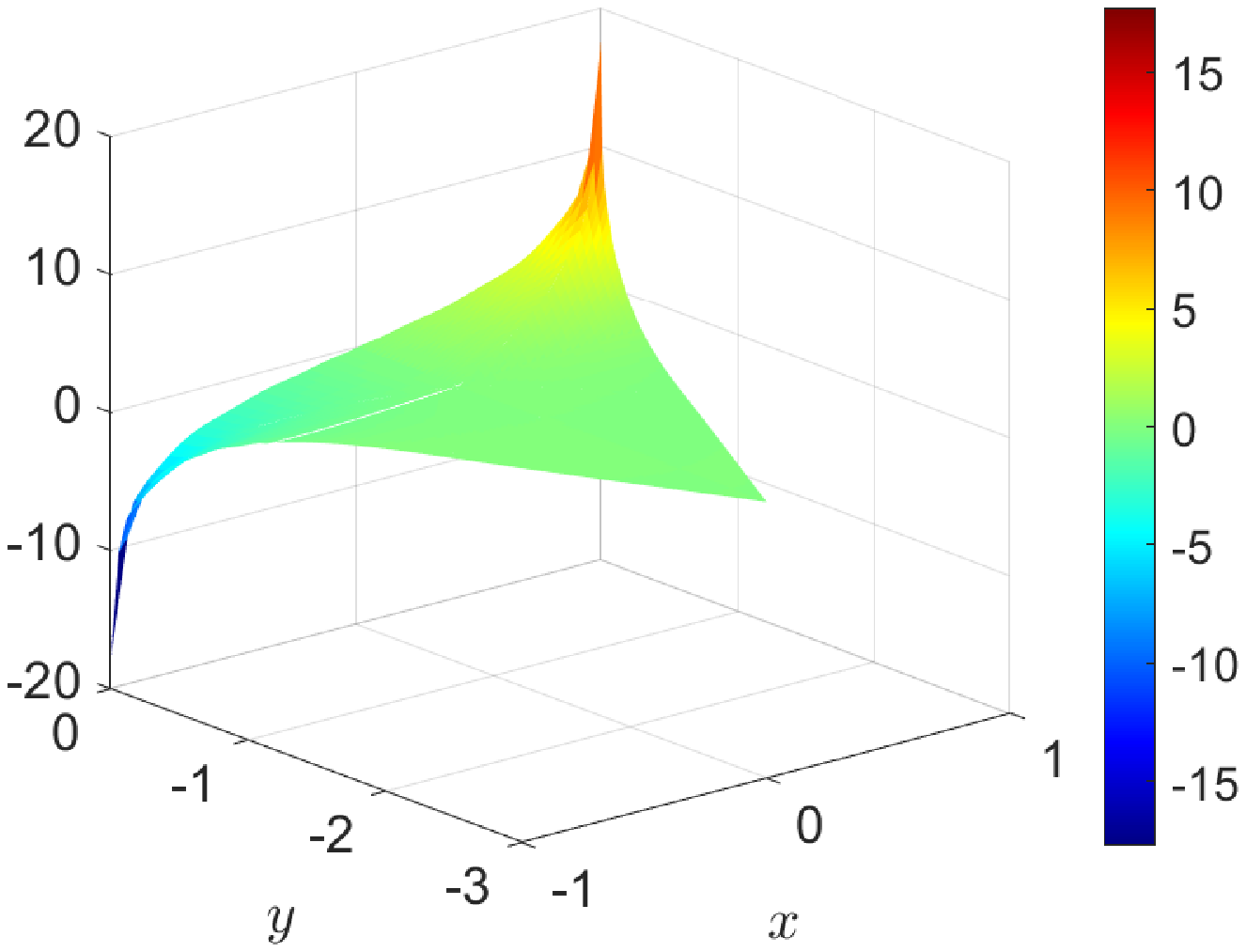}
		\caption{\label{fig:moffat p10 pressure}}
	\end{subfigure}
	\caption{SCIP approximation of the Moffatt problem with $p = 10$ and $\lambda = 10^{3}$: (a) velocity streamlines with $|\bdd{u}|$ background color and (b) pressure. \label{fig:moffatt p10}}
\end{figure}

\begin{figure}[htb]
	\centering
	\begin{subfigure}[b]{0.48\linewidth}
		\centering
		\includegraphics[width=\linewidth]{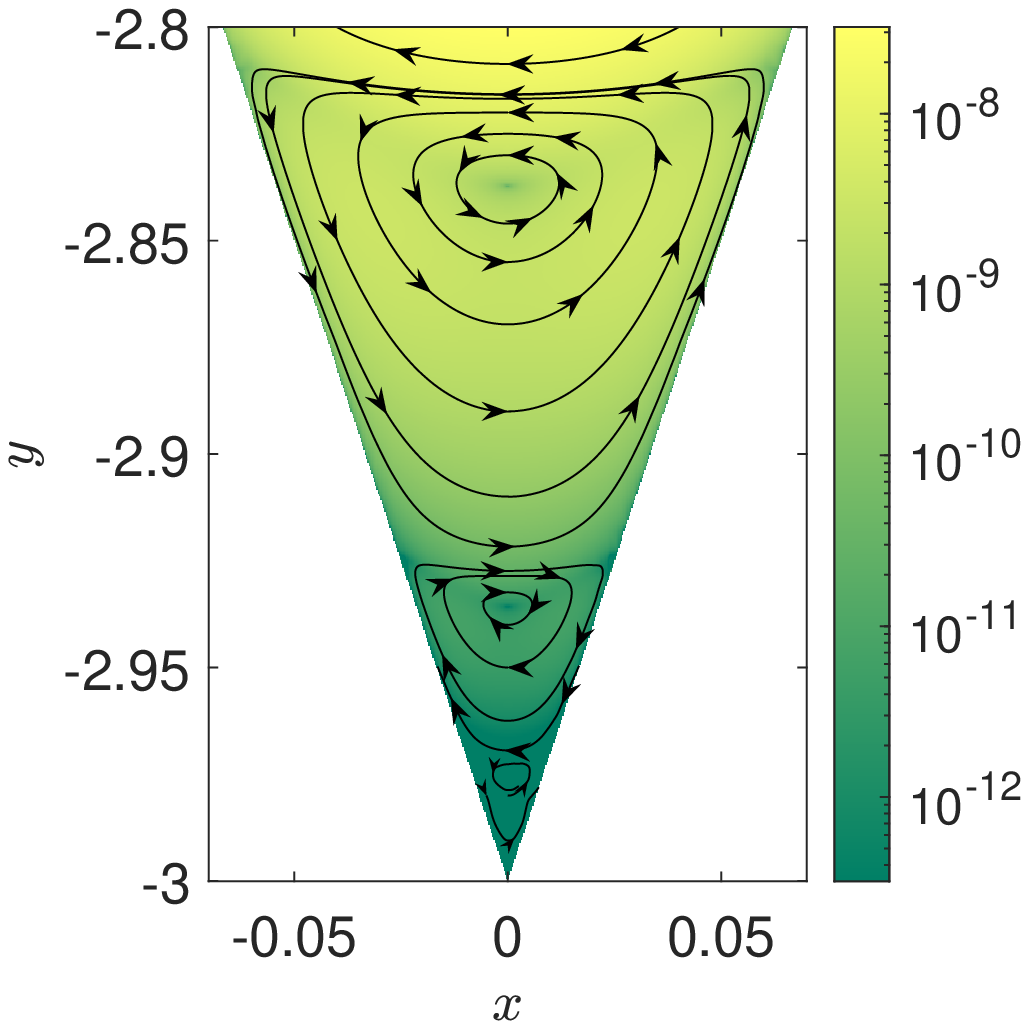}
		\caption{\label{fig:moffat p10 velocity zoom}}
	\end{subfigure}
	\hfill
	\begin{subfigure}[b]{0.48\linewidth}
		\centering
		\includegraphics[width=\linewidth]{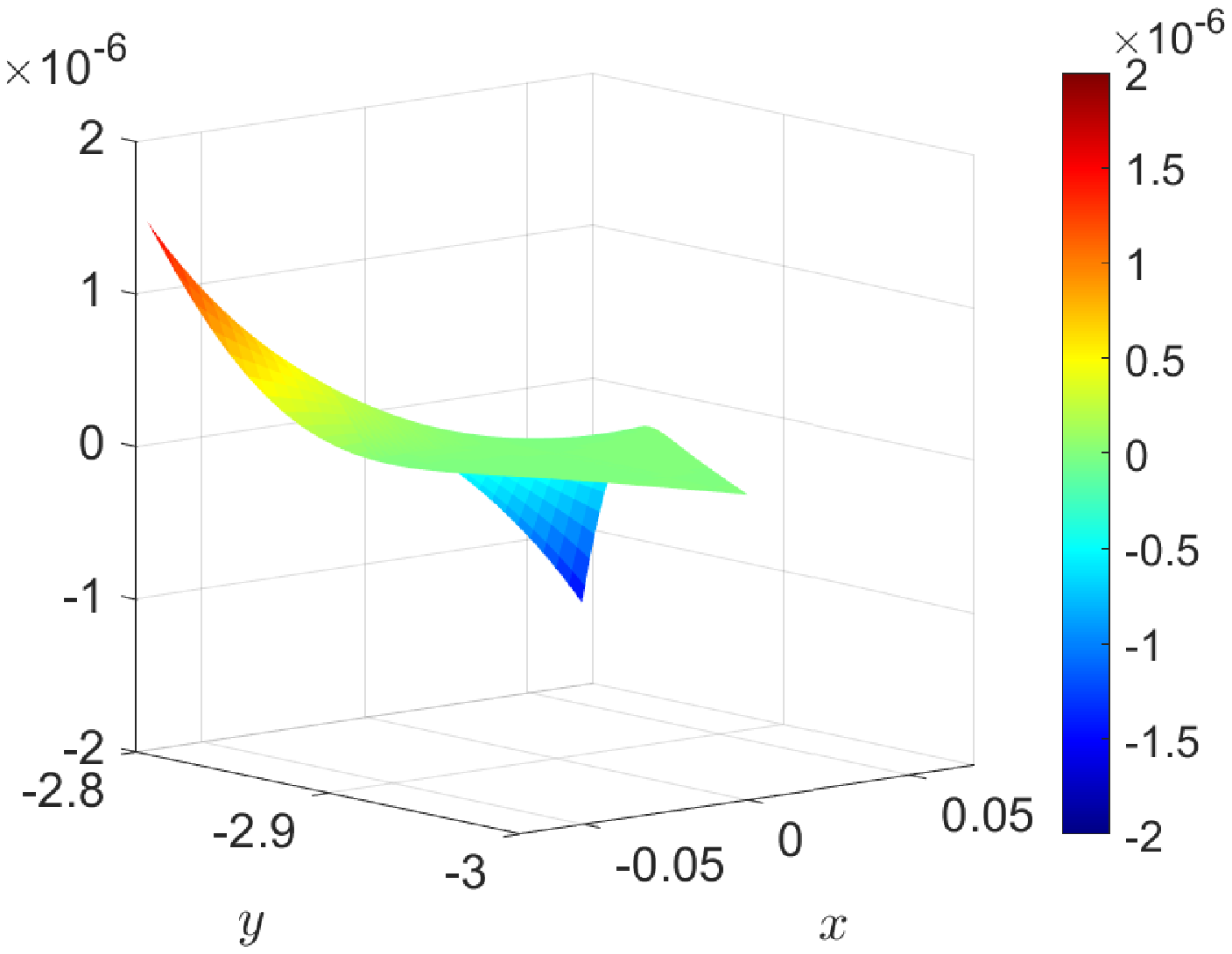}
		\caption{\label{fig:moffat p10 pressure zoom}}
	\end{subfigure}
	\caption{Zoom on bottom eddies of the SCIP approximation of the Moffatt problem with $p = 10$ and $\lambda = 10^{3}$: (a) velocity streamlines with $|\bdd{u}|$ background color and (b) pressure. \label{fig:moffatt p10 zoom}}
\end{figure}

\section{Stokes Extension Operators}
\label{sec:stokes extension}

Given a function $\bdd{u} \in \bdd{X} := X \times X$ and $K \in \mathcal{T}$, \cref{lem:interior inversion} shows that there exists a unique $(\bdd{u}_{S,K}, q_{S,K}) \in \bm{\mathcal{P}}_p(K) \times \dive \bdd{X}_I(K)$ satisfying
\begin{subequations}
	\label{eq:stokes extension def}
	\begin{alignat}{2}
		\label{eq:stokes extension def 1}
		a_K( \bdd{u}_{S,K}, \bdd{v}) - (q_{S,K}, \dive \bdd{v})_K &= 0 \qquad & &\forall \bdd{v} \in \bdd{X}_I(K), \\
		\label{eq:stokes extension def 2}
		-(r, \dive \bdd{u}_{S,K})_K &= 0 \qquad & &\forall r \in \dive \bdd{X}_I(K), \\
		\label{eq:stokes extension def 3}
		( \bdd{u}_{S,K} - \bdd{u} )|_{\partial K} &= \bdd{0}. \qquad & &
	\end{alignat}
\end{subequations}
We define the discrete Stokes extension operators $\mathbb{S} : \bdd{X} \to \bdd{X}$ and $\mathbb{Q} : \bdd{X} \to \dive \bdd{X}_I$ by the rules $\mathbb{S} \bdd{u}|_{K} := \bdd{u}_{S,K}$ and $\mathbb{Q} \bdd{u}|_{K} = q_{S,K}$ for all $K \in \mathcal{T}$. Similarly, there exist $\bdd{u}_{S,K}^{\dagger} \in \bm{\mathcal{P}}_p(K)$ and $q_{S,K}^{\dagger} \in \dive \bdd{X}_I(K)$ satisfying 
\begin{align}
	\label{eq:stokes extension def 1 dagger}
	a_K( \bdd{v}, \bdd{u}_{S,K}^{\dagger}) - (q_{S,K}^{\dagger}, \dive \bdd{v})_K &= 0 \qquad \forall \bdd{v} \in \bdd{X}_I(K),
\end{align}
along with \cref{eq:stokes extension def 2}, and \cref{eq:stokes extension def 3}. We define the ``adjoint" Stokes extension operators $\mathbb{S}^{\dagger}$ and $\mathbb{Q}^{\dagger}$ in terms of $\bdd{u}_{S,K}^{\dagger}$ and $q_{S,K}^{\dagger}$ analogously. 

The next result gives a precise statement of the sense in which the above operators are ``adjoints." Let $s(\cdot, \cdot;\cdot,\cdot)$ denote the Stokes bilinear form
\begin{align*}
	s(\bdd{u}, q; \bdd{v}, r) := a(\bdd{u}, \bdd{v}) - (q, \dive \bdd{v}) - (r, \dive \bdd{u}) \qquad \forall \bdd{u},\bdd{v} \in \bdd{X}, \ \forall q, r \in \dive \bdd{X}.
\end{align*}
Additionally, let $\Pi_I : L^2(\Omega) \to \dive \bdd{X}_I$ denote the usual $L^2(\Omega)$ projection operator onto $\dive \bdd{X}_I$:
\begin{align*}
	(\Pi_I q, r) = (q, r) \qquad \forall q \in L^2(\Omega), \ \forall r \in \dive \bdd{X}_I,
\end{align*}
and $\Pi_I^{\perp} := I - \Pi_I$. Then, we have the following result:
\begin{lemma}
	For all $\bdd{u}, \bdd{v} \in \bdd{X}$ and $q,r \in \dive \bdd{X}$, there holds
	\begin{align}
		\label{eq:stokes adjoint id}
		s(\mathbb{S} \bdd{u}, \mathbb{Q} \bdd{u} + \Pi_I^{\perp} q; \bdd{v}, r) = s(\bdd{u}, q; \mathbb{S}^{\dagger} \bdd{v}, \mathbb{Q}^{\dagger} \bdd{v} + \Pi_I^{\perp} r)
	\end{align}
	and
	\begin{align}
		\label{eq:app:div dagger id}
		\dive \mathbb{S} \bdd{u} = \dive \mathbb{S}^{\dagger} \bdd{u} = \Pi_I^{\perp} \dive \bdd{u}.
	\end{align}
\end{lemma}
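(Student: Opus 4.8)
The plan is to prove the divergence identity \cref{eq:app:div dagger id} first and then deduce the adjoint identity \cref{eq:stokes adjoint id} from it by a term-matching argument that reduces everything to a purely elementwise computation.

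For \cref{eq:app:div dagger id}, I note that $\dive \bdd{X}_I = \bigoplus_{K} \dive \bdd{X}_I(K)$ is a direct sum of spaces supported on single elements, so $\Pi_I$ acts elementwise and it suffices to show $\dive \bdd{u}_{S,K} = \Pi_I^{\perp}(\dive \bdd{u}|_K)$ for each $K$. The crux is that $\bdd{u}_{S,K} - \bdd{u}|_K \in \bm{\mathcal{P}}_p(K)$ vanishes on $\partial K$ by \cref{eq:stokes extension def 3}, so, extended by zero, it lies in $\bdd{X}_I(K)$; hence $\dive(\bdd{u}_{S,K} - \bdd{u}|_K) \in \dive \bdd{X}_I(K)$. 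Since \cref{eq:stokes extension def 2} states that $\dive \bdd{u}_{S,K} \perp \dive \bdd{X}_I(K)$, i.e. $\Pi_I \dive \bdd{u}_{S,K} = 0$, applying $\Pi_I^{\perp}$ to the splitting $\dive \bdd{u}_{S,K} = \dive\bdd{u}|_K + \dive(\bdd{u}_{S,K} - \bdd{u}|_K)$ gives $\dive \bdd{u}_{S,K} = \Pi_I^{\perp}\dive\bdd{u}_{S,K} = \Pi_I^{\perp}(\dive\bdd{u}|_K)$. The adjoint extension satisfies the same \cref{eq:stokes extension def 2,eq:stokes extension def 3}, so the identical argument yields $\dive \mathbb{S}^{\dagger}\bdd{u} = \Pi_I^{\perp}\dive\bdd{u}$.

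For \cref{eq:stokes adjoint id}, I abbreviate $\bdd{u}_S = \mathbb{S}\bdd{u}$, $q_S = \mathbb{Q}\bdd{u}$, $\bdd{v}_S^{\dagger} = \mathbb{S}^{\dagger}\bdd{v}$, $q_S^{\dagger} = \mathbb{Q}^{\dagger}\bdd{v}$ and expand $s$. Using the self-adjointness of the orthogonal projection $\Pi_I^{\perp}$ together with \cref{eq:app:div dagger id}, the term $(r, \dive\bdd{u}_S)$ equals $(\Pi_I^{\perp}r, \dive\bdd{u})$ and the term $(\Pi_I^{\perp}q, \dive\bdd{v})$ equals $(q, \dive\bdd{v}_S^{\dagger})$. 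These pairings cancel the projected contributions on the two sides and reduce \cref{eq:stokes adjoint id} to the single identity $a(\bdd{u}_S, \bdd{v}) - (q_S, \dive\bdd{v}) = a(\bdd{u}, \bdd{v}_S^{\dagger}) - (q_S^{\dagger}, \dive\bdd{u})$.

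To establish this reduced identity I would work on a fixed $K$ and set $\bdd{e} := \bdd{u}_{S,K} - \bdd{u}|_K$ and $\bdd{e}^{\dagger} := \bdd{v}_{S,K}^{\dagger} - \bdd{v}|_K$, both of which lie in $\bdd{X}_I(K)$ as noted above. Substituting $\bdd{v} = \bdd{v}_{S,K}^{\dagger} - \bdd{e}^{\dagger}$ on the left and invoking the primal equation \cref{eq:stokes extension def 1} with test function $\bdd{e}^{\dagger}$ turns $a_K(\bdd{u}_{S,K}, \bdd{e}^{\dagger})$ into $(q_{S,K}, \dive\bdd{e}^{\dagger})_K$; this cancels against the divergence cross-term, and \cref{eq:stokes extension def 2} with $r = q_{S,K}$ annihilates $(q_{S,K}, \dive\bdd{v}_{S,K}^{\dagger})_K$, leaving $a_K(\bdd{u}_{S,K}, \bdd{v}_{S,K}^{\dagger})$. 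The mirror computation on the right — substituting $\bdd{u} = \bdd{u}_{S,K} - \bdd{e}$, using the adjoint equation \cref{eq:stokes extension def 1 dagger} with test function $\bdd{e}$, and then \cref{eq:stokes extension def 2} for the adjoint extension with $r = q_{S,K}^{\dagger}$ — likewise collapses to $a_K(\bdd{u}_{S,K}, \bdd{v}_{S,K}^{\dagger})$. Summing over $K \in \mathcal{T}$ yields the reduced identity and hence \cref{eq:stokes adjoint id}. The main obstacle is purely organizational: keeping track of which defining relation is used where, in particular invoking the boundary-matching condition \cref{eq:stokes extension def 3} exactly to guarantee $\bdd{e}, \bdd{e}^{\dagger} \in \bdd{X}_I(K)$ (which is what licenses using \cref{eq:stokes extension def 1,eq:stokes extension def 1 dagger} as test equations) and applying \cref{eq:stokes extension def 2} with the correct pressure to kill each leftover divergence pairing. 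Notably, no symmetry of $a(\cdot,\cdot)$ is used, which is essential since $a$ may contain a nonsymmetric convection term.
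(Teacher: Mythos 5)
Your proposal is correct and takes essentially the same route as the paper: the divergence identity follows from $\bdd{u}-\mathbb{S}\bdd{u}\in\bdd{X}_I$ together with \cref{eq:stokes extension def 2}, and the adjoint identity is obtained by rewriting the divergence cross-terms via \cref{eq:app:div dagger id} and then collapsing both sides to the common quantity $a(\mathbb{S}\bdd{u},\mathbb{S}^{\dagger}\bdd{v})$ using \cref{eq:stokes extension def 1}, \cref{eq:stokes extension def 1 dagger}, and \cref{eq:stokes extension def 2}. The only difference is cosmetic: you carry out the cancellations element by element and meet in the middle, while the paper runs the same cancellations globally as a single chain.
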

\begin{proof}
	Let $\bdd{u} \in \bdd{X}$ be given.	Then, $\bdd{u}_I := \bdd{u} - \mathbb{S} \bdd{u}$ satisfies $\bdd{u}_I \in \bdd{X}_I$ by \cref{eq:stokes extension def 3}, and so $\dive \mathbb{S} \bdd{u} = \dive \bdd{u} + \dive \bdd{u}_I$. Applying $\Pi_I^{\perp}$  gives
	\begin{align*}
		\dive \mathbb{S} \bdd{u} = \Pi_I^{\perp} \dive \mathbb{S} \bdd{u} = \Pi_I^{\perp} \dive \bdd{u} + \Pi_I^{\perp} \dive \bdd{u}_I = \Pi_I^{\perp} \dive \bdd{u},
	\end{align*}
	where we used \cref{eq:stokes extension def 2} and that $\Pi_I \dive \bdd{u}_I = \dive \bdd{u}_I$. Similar arguments show that $\dive \mathbb{S}^{\dagger} \bdd{u} = \Pi_I^{\perp} \dive \bdd{u}$, and \cref{eq:app:div dagger id} follows.
	
	Now let $\bdd{u}, \bdd{v} \in \bdd{X}$ and $q, r \in \dive \bdd{X}$ be given. Thanks to \cref{eq:app:div dagger id}, there holds
	\begin{align*}
		(\Pi_I^{\perp} q, \dive \bdd{v}) + (r, \dive \mathbb{S} \bdd{u}) &=  (q, \Pi_I^{\perp}\dive \bdd{v}) + (r, \Pi_I^{\perp} \dive \bdd{u})\\
		&= (q, \dive \mathbb{S}^{\dagger} \bdd{v}) +  (\Pi_I^{\perp} r, \dive \bdd{u}),
	\end{align*}
	and so
	\begin{multline*}
		s(\mathbb{S} \bdd{u}, \mathbb{Q} \bdd{u} + \Pi_I^{\perp} q; \bdd{v}, r) = a(\mathbb{S} \bdd{u}, \bdd{v}) - (\mathbb{Q} \bdd{u}, \dive \bdd{v}) - (q, \dive \mathbb{S}^{\dagger} \bdd{v})  -  (\Pi_I^{\perp} r, \dive \bdd{u}).
	\end{multline*}
	Since $\bdd{v} - \mathbb{S}^{\dagger} \bdd{v} \in \bdd{X}_I$ by \cref{eq:stokes extension def 3}, we have
	\begin{align*}
		a(\mathbb{S} \bdd{u}, \bdd{v}) - (\mathbb{Q} \bdd{u}, \dive \bdd{v}) = a(\mathbb{S} \bdd{u}, \mathbb{S}^{\dagger} \bdd{v}) - (\mathbb{Q} \bdd{u}, \dive \mathbb{S}^{\dagger} \bdd{v}) = a(\mathbb{S} \bdd{u}, \mathbb{S}^{\dagger} \bdd{v}),
	\end{align*} 
	where we used \cref{eq:stokes extension def 1} and \cref{eq:stokes extension def 2}. Applying similar arguments to $\bdd{u} - \mathbb{S} \bdd{u}$ gives
	\begin{align*}
		a(\mathbb{S} \bdd{u}, \mathbb{S}^{\dagger} \bdd{v}) = a(\bdd{u}, \mathbb{S}^{\dagger} \bdd{v}) + (\mathbb{Q}^{\dagger} \bdd{v}, \dive (\mathbb{S} \bdd{u} - \bdd{u})) = a(\bdd{u}, \mathbb{S}^{\dagger} \bdd{v}) - (\mathbb{Q}^{\dagger} \bdd{v}, \dive \bdd{u}),
	\end{align*}
	where we used \cref{eq:stokes extension def 1 dagger} and \cref{eq:stokes extension def 2}. \Cref{eq:stokes adjoint id} now follows on collecting results.
\end{proof}
\noindent 
The next result characterizes $\tilde{\bdd{X}}_B$ as an invariant subspace of $\bdd{X}_D$ under the operator $\mathbb{S}$ and likewise for $\tilde{\bdd{X}}_B^{\dagger}$ and $\mathbb{S}^{\dagger}$:
\begin{lemma}
	\label{lem:tildexb stokes ext}
	The following identities holds:
	\begin{align}
		\label{eq:tildexb stokes ext}
		\tilde{\bdd{X}}_B &= \{ \bdd{v} \in \bdd{X}_D : \mathbb{S} \bdd{v} = \bdd{v} \} = \{  \mathbb{S} \bdd{v} : \bdd{v} \in \bdd{X}_D \}, \\
		\label{eq:tildexb stokes ext dagger}
		\tilde{\bdd{X}}_B^{\dagger} &= \{ \bdd{v} \in \bdd{X}_D : \mathbb{S}^{\dagger} \bdd{v} = \bdd{v} \}  = \{  \mathbb{S}^{\dagger} \bdd{v} : \bdd{v} \in \bdd{X}_D \}.
	\end{align}
	Moreover,
	\begin{align}
		\label{eq:tilde dive equiv}
		\dive \tilde{\bdd{X}}_B = \dive \tilde{\bdd{X}}_B^{\dagger}.
	\end{align}
\end{lemma}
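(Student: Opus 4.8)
The plan is to prove the two characterizations \eqref{eq:tildexb stokes ext} and \eqref{eq:tildexb stokes ext dagger} first, and then deduce the divergence identity \eqref{eq:tilde dive equiv} from them. I will focus on \eqref{eq:tildexb stokes ext}; the adjoint version \eqref{eq:tildexb stokes ext dagger} then follows by the symmetric argument in which $\mathbb{S}^{\dagger}$, $a_K(\bdd{v},\cdot)$, and $\tilde{\bdd{X}}_B^{\dagger}$ replace $\mathbb{S}$, $a_K(\cdot,\bdd{v})$, and $\tilde{\bdd{X}}_B$ throughout. The guiding observation is that the defining equations \eqref{eq:stokes extension def} of the Stokes extension $\mathbb{S}$ are exactly the local conditions \eqref{eq:tildexb local} that define $\tilde{\bdd{X}}_B$, except that $\mathbb{S}$ fixes the boundary trace rather than requiring the conditions to hold globally. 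The key structural fact I will exploit is the direct sum decomposition $\bdd{X}_D = \bdd{X}_I \oplus \tilde{\bdd{X}}_B$ from \cref{thm:xd decomp and inf-sup}, together with \cref{lem:tildexb matrix characterization}, which says the interior degrees of freedom of a function in $\tilde{\bdd{X}}_B$ are slaved to its boundary degrees of freedom via $\bdd{S}_K$.

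First I would show the middle set equals $\tilde{\bdd{X}}_B$, i.e.\ $\bdd{v}\in\tilde{\bdd{X}}_B \iff \mathbb{S}\bdd{v}=\bdd{v}$. For the forward direction, take $\bdd{v}\in\tilde{\bdd{X}}_B$. Its boundary trace is unchanged by $\mathbb{S}$ (condition \eqref{eq:stokes extension def 3}), so $\mathbb{S}\bdd{v}-\bdd{v}\in\bdd{X}_I$; I then verify that the pair $(\bdd{v},\Pi_I\,?\,)$ — more precisely, that $\bdd{v}$ itself together with an appropriate interior pressure — already solves the local Stokes problem \eqref{eq:stokes extension def}, using that the two conditions defining $\tilde{\bdd{X}}_B$ in \eqref{eq:tildexb def} are precisely \eqref{eq:stokes extension def 1} (tested against $\bdd{N}_I(K)$, with the pressure supplying the complementary test directions) and \eqref{eq:stokes extension def 2}. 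Uniqueness from \cref{lem:interior inversion} then forces $\mathbb{S}\bdd{v}=\bdd{v}$. Concretely, one checks that $\bdd{v}_{I,K}=\bdd{S}_K\bdd{v}_{B,K}$ (from \cref{lem:tildexb matrix characterization}) is exactly the interior solution produced by \eqref{eq:zk def}, since $\bdd{S}_K$ is built from the same interior Stokes matrix \eqref{eq:interior stokes matrix}. Conversely, if $\mathbb{S}\bdd{v}=\bdd{v}$ then \eqref{eq:stokes extension def 1}--\eqref{eq:stokes extension def 2} hold for $\bdd{v}$, and testing \eqref{eq:stokes extension def 1} against $\bdd{N}_I(K)$ kills the pressure term (since $\dive$ of a function in $\bdd{N}_I(K)$ vanishes) to give exactly the first condition of \eqref{eq:tildexb local}, while \eqref{eq:stokes extension def 2} gives the second; hence $\bdd{v}\in\tilde{\bdd{X}}_B$.

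Next I would identify the range, $\{\mathbb{S}\bdd{v}:\bdd{v}\in\bdd{X}_D\}=\tilde{\bdd{X}}_B$. The inclusion $\subseteq$ follows once I observe $\mathbb{S}$ is idempotent: $\mathbb{S}(\mathbb{S}\bdd{u})=\mathbb{S}\bdd{u}$, because $\mathbb{S}\bdd{u}$ already satisfies \eqref{eq:stokes extension def 1}--\eqref{eq:stokes extension def 2} on each $K$ and shares its own boundary trace, so by uniqueness it is fixed by a second application of $\mathbb{S}$; combined with the first part this puts $\mathbb{S}\bdd{u}\in\tilde{\bdd{X}}_B$. The reverse inclusion $\supseteq$ is immediate since any $\bdd{v}\in\tilde{\bdd{X}}_B$ satisfies $\bdd{v}=\mathbb{S}\bdd{v}$. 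Finally, for \eqref{eq:tilde dive equiv}, I would apply the divergence identity \eqref{eq:app:div dagger id}, which gives $\dive\mathbb{S}\bdd{v}=\dive\mathbb{S}^{\dagger}\bdd{v}=\Pi_I^{\perp}\dive\bdd{v}$ for every $\bdd{v}\in\bdd{X}$. Taking $\bdd{v}$ over $\bdd{X}_D$ and using the just-proved range characterizations, $\dive\tilde{\bdd{X}}_B=\dive\{\mathbb{S}\bdd{v}\}=\Pi_I^{\perp}\dive\bdd{X}_D=\dive\{\mathbb{S}^{\dagger}\bdd{v}\}=\dive\tilde{\bdd{X}}_B^{\dagger}$, which is \eqref{eq:tilde dive equiv}.

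The main obstacle I anticipate is the forward direction of the set-equality in the second paragraph: carefully matching the \emph{variational} conditions defining $\tilde{\bdd{X}}_B$ in \eqref{eq:tildexb def} — which test $a_K$ only against the divergence-free interior functions $\bdd{N}_I(K)$ and separately impose the divergence orthogonality — with the \emph{saddle-point} conditions \eqref{eq:stokes extension def 1}--\eqref{eq:stokes extension def 2}, whose first equation tests $a_K$ against all of $\bdd{X}_I(K)$ but compensates with the pressure multiplier $q_{S,K}\in\dive\bdd{X}_I(K)$. The reconciliation hinges on the decomposition $\bdd{X}_I(K)=\bdd{N}_I(K)\oplus(\text{a divergence-nondegenerate complement})$ and on \cref{lem:interior inversion}, which guarantees the multiplier $q_{S,K}$ exists and is uniquely determined so as to enforce the test directions not already covered by $\bdd{N}_I(K)$; everything else is routine bookkeeping with the block structure of \eqref{eq:interior stokes matrix} and $\bdd{S}_K$.
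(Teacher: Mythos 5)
Your proposal is correct and follows essentially the same route as the paper: both identify the action of $\mathbb{S}$ on the degrees of freedom with the matrix $\bdd{S}_K$ from \cref{eq:zk def} (the extension problem \cref{eq:stokes extension def} and the system defining $\bdd{S}_K$ are the same interior Stokes saddle system), invoke \cref{lem:tildexb matrix characterization} to conclude \cref{eq:tildexb stokes ext}, and deduce \cref{eq:tilde dive equiv} from \cref{eq:app:div dagger id}. Your extra explicit steps (idempotency of $\mathbb{S}$, existence of the interior pressure multiplier via \cref{lem:interior inversion}) are sound and are implicit in the paper's shorter argument, which obtains the fixed-point set and the range simultaneously from the matrix characterization.
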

\begin{proof}
	Let $\bdd{u} \in \bdd{X}_D$ and $K \in \mathcal{T}$. Using the notation of \cref{sec:modified it}, $\bdd{u}|_{K}$ may be expressed as $\bdd{u}|_{K} = \vec{\Phi}_{B,K}^T \vec{u}_{B,K}  + \vec{\Phi}_{I,K}^T \vec{u}_{I,K}$. By \cref{eq:stokes extension def}, $\mathbb{S} \bdd{u}|_{K} = \vec{\Phi}_{B,K}^T \vec{u}_{B,K}  + \vec{\Phi}_{I,K}^T \vec{u}_{I,K}^{\#}$, where
	\begin{align*}
		\begin{bmatrix}
			\bdd{E}_{II} & \bdd{G}_{ \iota I}^T \\
			\bdd{G}_{\iota I} & \bdd{0}
		\end{bmatrix} \begin{bmatrix}
			\vec{u}_{I,K}^{\#} \\
			*
		\end{bmatrix} = - \begin{bmatrix}
			\bdd{E}_{IB} \\
			\bdd{G}_{\iota B} 
		\end{bmatrix} \vec{u}_{B,K}.
	\end{align*}
	Thus, $ \{ \bdd{v} \in \bdd{X}_D : \mathbb{S} \bdd{v} = \bdd{v} \}  = \{  \mathbb{S} \bdd{v} : \bdd{v} \in \bdd{X}_D \} = \{ \bdd{v} \in \bdd{X}_D : \vec{v}_{I,K} = \bdd{S}_K \vec{v}_{B,K} \ \forall K \in \mathcal{T} \}$, and so \cref{eq:tildexb stokes ext}	follows from \cref{lem:tildexb matrix characterization}. Similar arguments give \cref{eq:tildexb stokes ext dagger}. \Cref{eq:tilde dive equiv} is now a consequence of \cref{eq:app:div dagger id}. 
\end{proof}

\begin{lemma}
	For all $\bdd{u}, \bdd{v} \in \bdd{X}$, there holds
	\begin{align}
		\label{eq:app:a dagger id}
		a(\mathbb{S} \bdd{u}, \mathbb{S}\bdd{v}) = a(\mathbb{S} \bdd{u}, \mathbb{S}^{\dagger} \bdd{v}) = a(\mathbb{S}^{\dagger} \bdd{u}, \mathbb{S}^{\dagger} \bdd{v}).
	\end{align}
\end{lemma}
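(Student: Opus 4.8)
The plan is to prove the two equalities separately; in each case the idea is to exhibit a suitable difference of Stokes extensions that is simultaneously an \emph{interior} function and \emph{divergence free}, which forces the relevant contribution of $a(\cdot,\cdot)$ to vanish via the defining Galerkin relations. Throughout I work element-by-element, using that $a(\cdot,\cdot) = \sum_{K \in \mathcal{T}} a_K(\cdot,\cdot)$.

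First I would establish the left equality $a(\mathbb{S}\bdd{u}, \mathbb{S}\bdd{v}) = a(\mathbb{S}\bdd{u}, \mathbb{S}^{\dagger}\bdd{v})$. The key observation is that $\bdd{d} := \mathbb{S}\bdd{v} - \mathbb{S}^{\dagger}\bdd{v}$ belongs to $\bdd{X}_I$: on each $K$ both $\mathbb{S}\bdd{v}$ and $\mathbb{S}^{\dagger}\bdd{v}$ agree with $\bdd{v}$ on $\partial K$ by \cref{eq:stokes extension def 3}, so $\bdd{d}|_K$ vanishes on $\partial K$ and hence $\bdd{d}|_K \in \bdd{X}_I(K)$. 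Moreover $\dive \bdd{d} = \dive \mathbb{S}\bdd{v} - \dive \mathbb{S}^{\dagger}\bdd{v} = 0$ by \cref{eq:app:div dagger id}, since both divergences equal $\Pi_I^{\perp}\dive \bdd{v}$. Testing the defining relation \cref{eq:stokes extension def 1} for $\mathbb{S}\bdd{u}$ against $\bdd{d}|_K \in \bdd{X}_I(K)$ gives, on each element, $a_K(\mathbb{S}\bdd{u}, \bdd{d}) = (\mathbb{Q}\bdd{u}, \dive \bdd{d})_K = 0$ because $\dive \bdd{d} = 0$. Summing over $K \in \mathcal{T}$ yields $a(\mathbb{S}\bdd{u}, \bdd{d}) = 0$, which is precisely the first equality. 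The right equality $a(\mathbb{S}\bdd{u}, \mathbb{S}^{\dagger}\bdd{v}) = a(\mathbb{S}^{\dagger}\bdd{u}, \mathbb{S}^{\dagger}\bdd{v})$ follows by the mirror-image argument, now differencing in the first slot: setting $\bdd{e} := \mathbb{S}\bdd{u} - \mathbb{S}^{\dagger}\bdd{u}$, exactly as above one has $\bdd{e} \in \bdd{X}_I$ and $\dive \bdd{e} = 0$, and testing the \emph{adjoint} defining relation \cref{eq:stokes extension def 1 dagger} for $\mathbb{S}^{\dagger}\bdd{v}$ against $\bdd{e}|_K$ gives $a_K(\bdd{e}, \mathbb{S}^{\dagger}\bdd{v}) = (\mathbb{Q}^{\dagger}\bdd{v}, \dive \bdd{e})_K = 0$, so $a(\bdd{e}, \mathbb{S}^{\dagger}\bdd{v}) = 0$. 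Chaining the two identities proves the claim.

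The only point requiring care, and the one I expect to be the crux, is the bookkeeping of which argument of $a_K(\cdot,\cdot)$ the interior test function occupies. For $\mathbb{S}$, the interior equation \cref{eq:stokes extension def 1} places the test function in the second slot, whereas for $\mathbb{S}^{\dagger}$ equation \cref{eq:stokes extension def 1 dagger} places it in the first. Matching $\bdd{d}$ (respectively $\bdd{e}$) to the correct slot is what collapses each pressure coupling term to $(\cdot, \dive(\text{difference}))_K$, and it is exactly the vanishing of that divergence, guaranteed by \cref{eq:app:div dagger id}, that removes the pressure contribution. No ellipticity or inf-sup input is needed: the identity is a pure consequence of the local Galerkin orthogonality built into the Stokes extensions together with the fact that $\mathbb{S}$ and $\mathbb{S}^{\dagger}$ share the same divergence $\Pi_I^{\perp}\dive$.
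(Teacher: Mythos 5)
Your proposal is correct and takes essentially the same route as the paper's own proof: both identify the differences $\mathbb{S}\bdd{v} - \mathbb{S}^{\dagger}\bdd{v}$ and $\mathbb{S}\bdd{u} - \mathbb{S}^{\dagger}\bdd{u}$ as divergence-free interior functions (i.e.\ elements of $\bdd{N}_I$, via \cref{eq:stokes extension def 3} and \cref{eq:app:div dagger id}), and then annihilate the cross terms by testing \cref{eq:stokes extension def 1} against the first difference and \cref{eq:stokes extension def 1 dagger} against the second, with the pressure couplings vanishing because the differences are divergence free. Your slot-by-slot bookkeeping is exactly the mechanism the paper uses, merely written element-by-element rather than globally.
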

\begin{proof}
	Let $\bdd{u}, \bdd{v} \in \bdd{X}$. By \cref{eq:app:div dagger id,eq:stokes extension def 3}, $\mathbb{S} \bdd{u} - \mathbb{S}^{\dagger} \bdd{u}, \mathbb{S} \bdd{v} - \mathbb{S}^{\dagger} \bdd{v} \in \bdd{N}_I$, and so
	\begin{align*}
		a(\mathbb{S} \bdd{u}, \mathbb{S} \bdd{v}) &= a(\mathbb{S} \bdd{u}, \mathbb{S}^{\dagger} \bdd{v}) + a(\mathbb{S} \bdd{u}, \mathbb{S} \bdd{v} - \mathbb{S}^{\dagger} \bdd{v}) = a( \mathbb{S} \bdd{u}, \mathbb{S}^{\dagger} \bdd{v}) \\
		a(\mathbb{S} \bdd{u}, \mathbb{S}^{\dagger} \bdd{v}) &= a(\mathbb{S}^{\dagger} \bdd{u}, \mathbb{S}^{\dagger} \bdd{v}) + a(\mathbb{S} \bdd{u} - \mathbb{S}^{\dagger} \bdd{u}, \bdd{v}) = a(\mathbb{S}^{\dagger} \bdd{u}, \mathbb{S}^{\dagger} \bdd{v})
	\end{align*}
	by \cref{eq:stokes extension def 1} with $\bdd{v} = \mathbb{S} \bdd{v} - \mathbb{S}^{\dagger} \bdd{v}$ and \cref{eq:stokes extension def 1 dagger} with $\bdd{v} = \mathbb{S} \bdd{u} - \mathbb{S}^{\dagger} \bdd{u}$.
\end{proof}

\begin{lemma}
	\label{lem:uniqueness tilde sigma a}
	Let $\tilde{\bdd{N}}_B := \{ \bdd{z} \in \tilde{\bdd{X}}_B : \dive \bdd{z} \equiv 0 \}$ and $\tilde{\bdd{N}}_B^{\dagger} := \{ \bdd{z} \in \tilde{\bdd{X}}_B^{\dagger} : \dive \bdd{z} \equiv 0 \}$. The variational problem
	\begin{align}
		\label{eq:app:tilde sigma a solve}
		\bdd{z} \in \tilde{\bdd{N}}_B : \qquad a(\bdd{z}, \bdd{w}) = F(\bdd{w}) \qquad \forall \bdd{w} \in \tilde{\bdd{N}}_B^{\dagger}
	\end{align}
	is uniquely solvable for all linear functionals $F$ on $\tilde{\bdd{N}}_B^{\dagger}$. 
\end{lemma}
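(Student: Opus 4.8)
The plan is to use that $\tilde{\bdd{N}}_B$ and $\tilde{\bdd{N}}_B^{\dagger}$ are finite dimensional, so that \cref{eq:app:tilde sigma a solve} is equivalent to a square linear system once one fixes bases. Unique solvability for every linear functional $F$ then reduces to two claims: (i) $\dim \tilde{\bdd{N}}_B = \dim \tilde{\bdd{N}}_B^{\dagger}$, so that the governing matrix is square; and (ii) injectivity, i.e.\ the only $\bdd{z} \in \tilde{\bdd{N}}_B$ satisfying $a(\bdd{z}, \bdd{w}) = 0$ for all $\bdd{w} \in \tilde{\bdd{N}}_B^{\dagger}$ is $\bdd{z} = \bdd{0}$. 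For a square system, injectivity already forces invertibility, which delivers both existence and uniqueness simultaneously.

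For (i), I would note that $\tilde{\bdd{N}}_B$ is exactly the kernel of $\dive$ restricted to $\tilde{\bdd{X}}_B$, so the rank--nullity theorem gives $\dim \tilde{\bdd{N}}_B = \dim \tilde{\bdd{X}}_B - \dim \dive \tilde{\bdd{X}}_B$, and similarly $\dim \tilde{\bdd{N}}_B^{\dagger} = \dim \tilde{\bdd{X}}_B^{\dagger} - \dim \dive \tilde{\bdd{X}}_B^{\dagger}$. By \cref{lem:tildexb matrix characterization}, both $\tilde{\bdd{X}}_B$ and $\tilde{\bdd{X}}_B^{\dagger}$ are graphs over the same set of boundary degrees of freedom, so $\dim \tilde{\bdd{X}}_B = \dim \tilde{\bdd{X}}_B^{\dagger}$, while \cref{eq:tilde dive equiv} gives $\dive \tilde{\bdd{X}}_B = \dive \tilde{\bdd{X}}_B^{\dagger}$. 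Equality of the two dimensions is then immediate.

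The crux is (ii), where the non-symmetry of $a(\cdot,\cdot)$ and the mismatch between the trial space $\tilde{\bdd{N}}_B$ and the test space $\tilde{\bdd{N}}_B^{\dagger}$ block any direct coercivity argument: testing with $\bdd{z}$ itself is not admissible. The idea I would use is to feed back the adjoint Stokes extension of $\bdd{z}$ as the test function. Given such a $\bdd{z} \in \tilde{\bdd{N}}_B$, set $\bdd{w} := \mathbb{S}^{\dagger} \bdd{z}$. Then $\bdd{w} \in \tilde{\bdd{X}}_B^{\dagger}$ by \cref{lem:tildexb stokes ext}, and \cref{eq:app:div dagger id} yields $\dive \bdd{w} = \Pi_I^{\perp} \dive \bdd{z} = \bdd{0}$ since $\bdd{z}$ is divergence free, so $\bdd{w} \in \tilde{\bdd{N}}_B^{\dagger}$ is an admissible test function. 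Because $\bdd{z} \in \tilde{\bdd{X}}_B$ satisfies $\mathbb{S}\bdd{z} = \bdd{z}$, the identity \cref{eq:app:a dagger id} gives $a(\bdd{z}, \bdd{w}) = a(\mathbb{S}\bdd{z}, \mathbb{S}^{\dagger}\bdd{z}) = a(\mathbb{S}\bdd{z}, \mathbb{S}\bdd{z}) = a(\bdd{z}, \bdd{z})$. Hence the hypothesis $a(\bdd{z}, \bdd{w}) = 0$ forces $a(\bdd{z}, \bdd{z}) = 0$, and ellipticity \cref{eq:a elliptic} yields $\bdd{z} = \bdd{0}$.

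The main obstacle is precisely this injectivity step: converting a non-symmetric form evaluated across two distinct subspaces into a coercive quantity. It is overcome by the single choice $\bdd{w} = \mathbb{S}^{\dagger}\bdd{z}$, which simultaneously lands in $\tilde{\bdd{N}}_B^{\dagger}$ (via \cref{eq:app:div dagger id}) and collapses the cross term to the diagonal via the adjointness identity \cref{eq:app:a dagger id}. Together with (i), this completes the proof of unique solvability.
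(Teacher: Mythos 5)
Your proof is correct and follows essentially the same route as the paper: the paper likewise reduces to a square system, assumes $F\equiv 0$, tests with $\bdd{w} = \mathbb{S}^{\dagger}\bdd{z}$, and uses \cref{eq:app:a dagger id} together with ellipticity \cref{eq:a elliptic} to conclude $\bdd{z}\equiv\bdd{0}$. The only difference is that you explicitly justify the square-system claim via rank--nullity, \cref{lem:tildexb matrix characterization}, and \cref{eq:tilde dive equiv}, whereas the paper asserts it without elaboration; this is a welcome addition of detail, not a different argument.
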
	
\begin{proof}
	Since \cref{eq:app:tilde sigma a solve} is equivalent to a square linear system, it suffices to show uniqueness. Suppose that $\bdd{z} \in \tilde{\bdd{N}}_B$ satisfies \cref{eq:app:tilde sigma a solve} with $F \equiv 0$. Choosing  $\bdd{w} = \mathbb{S}^{\dagger} \bdd{z}$ and applying \cref{eq:app:a dagger id} gives $a(\bdd{z}, \bdd{z}) = a(\bdd{z}, \mathbb{S}^{\dagger} \bdd{z}) = 0$. By ellipticity \cref{eq:a elliptic}, $\bdd{z} \equiv 0$.
\end{proof}

\subsection{Proof of \cref{lem:stokes sol decomp}}
\label{sec:proof decomp}

	Since \cref{eq:stokes tilde spaces} is equivalent to a square linear system, it again suffices to show uniqueness. Let $(\tilde{\bdd{u}}, \tilde{q})$ satisfy \cref{eq:stokes tilde spaces} with zero data on the RHS. Equations \cref{eq:stokes tilde spaces 2} and \cref{eq:tilde dive equiv} means that $\dive \tilde{\bdd{u}} \equiv 0$ and so $\tilde{\bdd{u}} \in \tilde{\bdd{N}}_B$. Choosing $\bdd{v} \in \tilde{\bdd{N}}_B^{\dagger}$ in \cref{eq:stokes tilde spaces 1} shows that $\tilde{\bdd{u}}$ satisfies \cref{eq:app:tilde sigma a solve} with $F \equiv 0$. By \cref{lem:uniqueness tilde sigma a}, $\tilde{\bdd{u}} \equiv 0$.
	
	Thanks to \cref{eq:tilde dive equiv}, there exists $\bdd{v} \in \tilde{\bdd{X}}_B^{\dagger}$ such that $\dive \bdd{v} = \tilde{q}$. Substituting this choice into \cref{eq:stokes tilde spaces 1} gives $\|\tilde{q}\|^2 = (\tilde{q}, \dive \bdd{v}) = 0$, and so $\tilde{q} \equiv 0$. Thus, \cref{eq:stokes tilde spaces} is uniquely solvable. 
	Moreover, for each $K \in \mathcal{T}$, there exists $(\bdd{u}_K, q_K) \in \bdd{X}_I(K) \times \dive \bdd{X}_I(K)$ satisfying \cref{eq:stokes interior} by \cref{lem:interior inversion}.
	
	By \cref{eq:stokes interior}, the functions $\bdd{u}_I := \sum_{K \in \mathcal{T}} \bdd{u}_K$ and $q_I := \sum_{K \in \mathcal{T}} q_K$ satisfy
	\begin{subequations}
		\begin{alignat}{2}
			\label{eq:stokes global interior 1}
			a(\bdd{u}_I, \bdd{v}) - (q_I, \dive \bdd{v}) &= L(\bdd{v}) - a(\tilde{\bdd{u}}, \bdd{v}) \qquad & & \forall \bdd{v} \in \bdd{X}_I, \\
			\label{eq:stokes global interior 2}
			-(r, \dive \bdd{u}_I) &= 0 \qquad & & \forall r \in \dive \bdd{X}_I. 
		\end{alignat}
	\end{subequations}
	Let $\bdd{u}_{X} := \tilde{\bdd{u}} + \bdd{u}_I$ and $q_{X} := \tilde{q} + q_I$. \Cref{eq:stokes tilde spaces 2} means that $\dive \tilde{\bdd{u}} \equiv 0$, while relation \cref{eq:stokes global interior 2} means that $\dive \bdd{u}_I \equiv 0$. As a result, $\dive \bdd{u}_{X} \equiv 0$ and so \cref{eq:stokes variational form fem 2} is satisfied.
	
	We now show that \cref{eq:stokes variational form fem 1} holds. For $\bdd{v} \in \bdd{X}_I$, there holds
	\begin{align*}
		a(\bdd{u}_{X}, \bdd{v}) - (q_{X}, \dive \bdd{v}) &= a(\tilde{\bdd{u}}, \bdd{v}) + a(\bdd{u}_I, \bdd{v}) - (q_I, \dive \bdd{v}) = L(\bdd{v}),
	\end{align*}
	where we used \cref{eq:stokes global interior 1} and that $(\tilde{q}, \dive \bdd{v}) = 0$ by \cref{eq:tildexb def}. For $\bdd{v} \in \tilde{\bdd{X}}_B^{\dagger}$, there holds
	\begin{align*}
		a(\bdd{u}_{X}, \bdd{v}) - (q_{X}, \dive \bdd{v}) = a(\tilde{\bdd{u}}, \bdd{v}) - (\tilde{q}, \dive \bdd{v}) + a(\bdd{u}_I, \bdd{v}) = L(\bdd{v}) + a(\bdd{u}_I, \bdd{v}),
	\end{align*}
	where we used \cref{eq:stokes tilde spaces 1} and that $(q_I, \dive \bdd{v}) = 0$ by \cref{eq:tildexb def}. Since $\bdd{v} \in \tilde{\bdd{X}}_B^{\dagger}$ and $\dive \bdd{u}_I \equiv 0$, $a(\bdd{u}_I, \bdd{v} ) = 0$ by definition. \Cref{eq:stokes variational form fem 1} now follows from linearity thanks to the decomposition \cref{eq:xd decomp} with $\tilde{\bdd{X}}_B^{\dagger}$. \qed

\section{Convergence of SCIP and the Iterated Penalty Method}
\label{sec:scip convergence}

We begin with an estimate for functions in $\tilde{\bdd{X}}_B$ that are orthogonal to divergence free functions:
\begin{lemma}
	\label{lem:app:orthog divefree tilde}
	Let  $\tilde{\bdd{N}}_B^{\perp} := \{ \bdd{v} \in \tilde{\bdd{X}}_B : a(\bdd{v}, \bdd{z}) = 0 \ \forall \bdd{z} \in \tilde{\bdd{N}}_B^{\dagger} \}$. Then, 
	\begin{align}
		\label{eq:app:div dagger perp cont}
		\|\bdd{u}\|_{1} \leq \frac{(M+\alpha)^2}{\alpha^2 \beta_X} \|\dive \bdd{u}\| \qquad \forall \bdd{u} \in \tilde{\bdd{N}}_B^{\perp},	
	\end{align}
	where $M > 0$ \cref{eq:a bounded}, $\alpha > 0$ \cref{eq:a elliptic}, and $\beta_X > 0$ \cref{eq:discrete inf-sup}.
\end{lemma}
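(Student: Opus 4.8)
The plan is to prove the bound $\|\bdd{u}\|_1 \leq \frac{(M+\alpha)^2}{\alpha^2 \beta_X}\|\dive \bdd{u}\|$ for $\bdd{u} \in \tilde{\bdd{N}}_B^{\perp}$ by a duality/inf-sup argument. Since $\tilde{\bdd{N}}_B^{\perp}$ consists of functions in $\tilde{\bdd{X}}_B$ that are $a$-orthogonal to $\tilde{\bdd{N}}_B^{\dagger}$, the idea is to control $\|\bdd{u}\|_1$ by pairing $\bdd{u}$ against a cleverly chosen test function, using ellipticity \cref{eq:a elliptic} to convert $a(\bdd{u},\cdot)$ into control of the norm, and the inf-sup condition \cref{eq:inf-sup boundary} on $\tilde{\bdd{X}}_B \times \dive \tilde{\bdd{X}}_B$ to handle the divergence.

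\textbf{Main steps.}
First I would start from ellipticity: $\alpha \|\bdd{u}\|_1^2 \leq a(\bdd{u}, \bdd{u})$. The goal is then to bound $a(\bdd{u}, \bdd{u})$ in terms of $\|\dive \bdd{u}\|$. Because $\bdd{u} \in \tilde{\bdd{N}}_B^{\perp}$ is orthogonal to the divergence-free part $\tilde{\bdd{N}}_B^{\dagger}$, I expect only the ``divergence-carrying'' component of any test function to contribute. Concretely, I would set $\tilde{r} := \dive \bdd{u} \in \dive \tilde{\bdd{X}}_B = \dive \tilde{\bdd{X}}_B^{\dagger}$ (using \cref{eq:tilde dive equiv}) and invoke \cref{eq:inf-sup boundary} (in its $\tilde{\bdd{X}}_B^{\dagger}$ form, guaranteed by \cref{thm:xd decomp and inf-sup}) to produce a function $\bdd{v}^{\dagger} \in \tilde{\bdd{X}}_B^{\dagger}$ with $\dive \bdd{v}^{\dagger} = \dive \bdd{u}$ and $\|\bdd{v}^{\dagger}\|_1 \leq \frac{M+\alpha}{\alpha\beta_X}\|\dive \bdd{u}\|$. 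The difference $\bdd{u} - \bdd{v}^{\dagger}$ then has zero divergence, but lives in $\tilde{\bdd{X}}_B + \tilde{\bdd{X}}_B^{\dagger}$ rather than a single space, so I would instead decompose the test function to exploit orthogonality cleanly.

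\textbf{The heart of the argument.}
The key maneuver is to write $\bdd{u} = \bdd{w} - \bdd{z}$ where $\bdd{z} \in \tilde{\bdd{N}}_B$ is divergence-free and $\bdd{w}$ carries the divergence — or, more in line with the paper's machinery, to test $a(\bdd{u}, \cdot)$ against $\bdd{v}^{\dagger}$ and then correct by a divergence-free element of $\tilde{\bdd{N}}_B^{\dagger}$ so that the orthogonality defining $\tilde{\bdd{N}}_B^{\perp}$ eliminates the correction. Since $\bdd{v}^{\dagger} \in \tilde{\bdd{X}}_B^{\dagger}$, I can decompose it (via \cref{thm:xd decomp and inf-sup} and Lax--Milgram as in the proof of that theorem) as a part realizing the divergence plus a divergence-free part in $\tilde{\bdd{N}}_B^{\dagger}$, against which $a(\bdd{u}, \cdot)$ vanishes by definition of $\tilde{\bdd{N}}_B^{\perp}$. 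Boundedness \cref{eq:a bounded} then gives $a(\bdd{u}, \bdd{u}) = a(\bdd{u}, \bdd{v}^{\dagger}) \leq M\|\bdd{u}\|_1\|\bdd{v}^{\dagger}\|_1$, and combining with ellipticity and the inf-sup bound on $\|\bdd{v}^{\dagger}\|_1$ yields $\alpha\|\bdd{u}\|_1^2 \leq M \|\bdd{u}\|_1 \cdot \frac{M+\alpha}{\alpha\beta_X}\|\dive\bdd{u}\|$, i.e. $\|\bdd{u}\|_1 \leq \frac{M(M+\alpha)}{\alpha^2\beta_X}\|\dive\bdd{u}\|$. The main obstacle is that this naive chain produces a factor $M(M+\alpha)$ rather than the claimed $(M+\alpha)^2$; getting the stated constant will require tracking the splitting of $\bdd{v}^{\dagger}$ more carefully, likely bounding $a(\bdd{u}, \bdd{v}^{\dagger})$ after subtracting its $\tilde{\bdd{N}}_B^{\dagger}$-component and re-estimating the divergence-realizing piece, so that an extra $(M+\alpha)/\alpha$ factor enters rather than a bare $M/\alpha$. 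I would therefore spend most of the effort verifying that the correct decomposition of the test function, combined with \cref{eq:app:a dagger id} relating $a$ on $\mathbb{S}$ and $\mathbb{S}^\dagger$ images, produces exactly the constant $(M+\alpha)^2/(\alpha^2\beta_X)$.
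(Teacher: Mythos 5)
Your route---a direct energy estimate combining ellipticity, the orthogonality defining $\tilde{\bdd{N}}_B^{\perp}$, boundedness, and the inf-sup construction---is genuinely different from the paper's proof, and it can be made to work; but as written it has a real gap at its central step: the identity $a(\bdd{u},\bdd{u}) = a(\bdd{u},\bdd{v}^{\dagger})$ is asserted, not proved. Orthogonality only lets you drop terms $a(\bdd{u},\bdd{n})$ with $\bdd{n}\in\tilde{\bdd{N}}_B^{\dagger}$, and (as you yourself note) $\bdd{u}-\bdd{v}^{\dagger}$ is not such a term; moreover, decomposing $\bdd{v}^{\dagger}$ into ``a part $\bdd{p}$ realizing the divergence plus a part in $\tilde{\bdd{N}}_B^{\dagger}$'' only yields $a(\bdd{u},\bdd{v}^{\dagger}) = a(\bdd{u},\bdd{p})$, which is not $a(\bdd{u},\bdd{u})$ for an arbitrary such $\bdd{p}$. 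The missing move is to pivot through $\mathbb{S}^{\dagger}\bdd{u}$: since $\bdd{u}\in\tilde{\bdd{X}}_B$, \cref{lem:tildexb stokes ext} gives $\mathbb{S}\bdd{u}=\bdd{u}$, so \cref{eq:app:a dagger id} yields $a(\bdd{u},\bdd{u}) = a(\mathbb{S}\bdd{u},\mathbb{S}\bdd{u}) = a(\mathbb{S}\bdd{u},\mathbb{S}^{\dagger}\bdd{u}) = a(\bdd{u},\mathbb{S}^{\dagger}\bdd{u})$; next, \cref{eq:app:div dagger id} together with $\Pi_I\dive\bdd{u}=0$ (from the second condition in \cref{eq:tildexb def}) gives $\dive\mathbb{S}^{\dagger}\bdd{u} = \dive\bdd{u}$, so $\mathbb{S}^{\dagger}\bdd{u}-\bdd{v}^{\dagger}\in\tilde{\bdd{N}}_B^{\dagger}$ and the defining property of $\tilde{\bdd{N}}_B^{\perp}$ gives $a(\bdd{u},\mathbb{S}^{\dagger}\bdd{u}) = a(\bdd{u},\bdd{v}^{\dagger})$. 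Only with this chain does your estimate close:
\begin{align*}
	\alpha\|\bdd{u}\|_1^2 \leq a(\bdd{u},\bdd{u}) = a(\bdd{u},\bdd{v}^{\dagger}) \leq M\|\bdd{u}\|_1\|\bdd{v}^{\dagger}\|_1 \leq M \frac{M+\alpha}{\alpha\beta_X}\|\dive\bdd{u}\|\,\|\bdd{u}\|_1.
\end{align*}

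Your closing worry about the constant is a misconception, and hunting for ``exactly $(M+\alpha)^2$'' is wasted effort: applying \cref{eq:a elliptic,eq:a bounded} to any nonzero function shows $\alpha\leq M$, hence $M(M+\alpha)\leq(M+\alpha)^2$, so the bound $\|\bdd{u}\|_1\leq M(M+\alpha)(\alpha^2\beta_X)^{-1}\|\dive\bdd{u}\|$ produced by the direct argument is \emph{stronger} than \cref{eq:app:div dagger perp cont} and proves the lemma (with a marginally sharper constant). For comparison, the paper proceeds quite differently: it uses the inf-sup construction to build $\bdd{v}\in\tilde{\bdd{X}}_B$ with $\dive\bdd{v}=\dive\bdd{u}$, corrects it by the solution $\bdd{z}\in\tilde{\bdd{N}}_B$ of \cref{eq:app:tilde sigma a solve} to obtain $\bdd{w}=\bdd{v}-\bdd{z}\in\tilde{\bdd{N}}_B^{\perp}$ satisfying the bound (the triangle inequality supplies the factor $(M+\alpha)/\alpha$, whence $(M+\alpha)^2$), and then proves $\bdd{u}=\bdd{w}$ by a uniqueness argument---which itself relies on the same $\mathbb{S}^{\dagger}$ pivot described above. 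Your approach trades the construct-and-identify structure (and its uniqueness step, via \cref{lem:uniqueness tilde sigma a}) for a one-pass estimate, at the price of having to execute the pivot explicitly; fill that step in and the proof is complete.
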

\begin{proof}
	Let $\bdd{u} \in \tilde{\bdd{N}}_B^{\perp}$. By the proof of \cref{thm:xd decomp and inf-sup}, there exists $\bdd{v} \in \tilde{\bdd{X}}_B$ such that $\dive \bdd{v} = \dive \bdd{u}$ and $\|\bdd{v}\|_{1} \leq (M+\alpha)(\alpha \beta_X)^{-1} \|\dive \bdd{u}\|$. 
	Since \cref{eq:app:tilde sigma a solve} is uniquely solvable by \cref{lem:uniqueness tilde sigma a}, there exists $\bdd{z} \in \tilde{\bdd{N}}_B$ such that $a(\bdd{z}, \bdd{n}) = a(\bdd{v}, \bdd{n})$ for all $\bdd{n} \in \tilde{\bdd{N}}_B^{\dagger}$. By ellipticity \cref{eq:a elliptic} and \cref{eq:app:a dagger id}, we have
	\begin{align*}
		\alpha \|\bdd{z}\|_1^2 \leq a(\bdd{z}, \bdd{z}) = a(\bdd{z}, \mathbb{S}^{\dagger} \bdd{z}) = a(\bdd{v}, \mathbb{S}^{\dagger} \bdd{z}) = a(\bdd{v}, \bdd{z}) \leq M \|\bdd{v}\|_{1} \|\bdd{z}\|_{1},
	\end{align*}
	since $\mathbb{S} \bdd{z} = \bdd{z}$ by \cref{lem:tildexb stokes ext}. Thus, $\|\bdd{z}\|_1 \leq M \alpha^{-1} \|\bdd{v}\|_1$.
	
	Let $\bdd{w} := \bdd{v} - \bdd{z}$. By construction, $\dive \bdd{w} = \dive \bdd{u}$ and $\bdd{w} \in \tilde{\bdd{N}}_B^{\perp}$. Moreover, $\bdd{w}$ satisfies \cref{eq:app:div dagger perp cont} thanks to the triangle inequality. To complete the proof, we now show that $\bdd{u} = \bdd{w}$. Since $\dive (\bdd{u} - \bdd{w}) = 0$, there exists $\bdd{e} := \bdd{u} - \bdd{w} \in \tilde{\bdd{N}}_B^{\perp}$. Thanks to \cref{eq:app:a dagger id}, $0 = a(\bdd{e}, \mathbb{S}^{\dagger} \bdd{ e}) = a(\bdd{e}, \bdd{e}) \geq \alpha \|\bdd{e}\|_1^2$. Consequently, $\bdd{e} \equiv \bdd{0}$ and so $\bdd{u} = \bdd{w}$.
\end{proof}

With \cref{lem:app:orthog divefree tilde} in hand, the proof of \cref{thm:boundary ip convergence} is a generalization of the convergence proof for the standard iterated penalty method (see e.g. \cite[p.356-359]{Brenner08}).

\begin{proof}[Proof of \cref{thm:boundary ip convergence}]
	Let $\bdd{e}^{n} := \tilde{\bdd{u}} - \tilde{\bdd{u}}^n$ and $r^n := \tilde{q} - \tilde{q}^n$, $n \in \mathbb{N}_0$. Subtracting \cref{eq:boundary ip 1} from \cref{eq:stokes tilde spaces 1} gives, for $n \in \mathbb{N}_0$,
	\begin{align}
		\label{eq:app:scip relations 1}
		a_{\lambda}(\bdd{e}^n, \bdd{v}) &= (r^n, \dive \bdd{v}) \qquad \forall \bdd{v} \in \bdd{X}_B^{\dagger}
	\end{align}
	and $r^{n+1} = r^{n} + \lambda \dive \tilde{\bdd{u}}^n = r^n - \lambda \dive \bdd{e}^n$ since $\dive \tilde{\bdd{u}} = 0$. Using these relations, we obtain
	\begin{align*}
		a_{\lambda}(\bdd{e}^{n+1}, \bdd{v}) &=  (r^n, \dive \bdd{v}) -	\lambda (\dive \bdd{e}^n, \dive \bdd{v})  
		= a_{\lambda}(\bdd{e}^n, \bdd{v}) - \lambda (\dive \bdd{e}^n, \dive \bdd{v}) 
		= a(\bdd{e}^n, \bdd{v})
	\end{align*}	
	for all $\bdd{v} \in \tilde{\bdd{X}}_B^{\dagger}$. Choosing $\bdd{v} = \mathbb{S}^{\dagger} \bdd{e}^{n+1}$ and using \cref{eq:app:a dagger id,eq:app:div dagger id} then gives
	\begin{align}
		\label{eq:app:scip u error relations}
		a(\bdd{e}^{n+1}, \bdd{e}^{n+1}) + \lambda \| \dive \bdd{e}^{n+1}\|^2 = a(\bdd{e}^{n}, \mathbb{S}^{\dagger} \bdd{e}^{n+1}) = a(\bdd{e}^n, \bdd{e}^{n+1}) \leq M \|\bdd{e}^n\|_1 \|\bdd{e}^{n+1}\|_1,
	\end{align}
	where we used \cref{eq:app:a dagger id} and that $\mathbb{S} \bdd{e}^{n+1} = \bdd{e}^{n+1}$. Moreover, \cref{eq:app:scip relations 1} shows that $\bdd{e}^{n+1} \in \tilde{\bdd{N}}_B^{\perp}$ for all $n \in \mathbb{N}_0$. Applying \cref{lem:app:orthog divefree tilde} and \cref{eq:a elliptic} to the LHS of \cref{eq:app:scip u error relations} gives
	\begin{align}
		\label{eq:app:scip u error 1}
		( \alpha + \lambda \tilde{\Upsilon}^{-2} ) \|\bdd{e}^{n+1}\|_1 \leq M \|\bdd{e}^{n}\|_1 \implies 	\| \bdd{e}^{n} \|_{1} \leq ( M \tilde{\Upsilon}^2 \lambda^{-1} )^n \| \bdd{e}^{0} \|_{1},
	\end{align}
	where $\tilde{\Upsilon} := (M+\alpha)^2 / (\alpha^2 \beta_X)$, the constant appearing in \cref{eq:app:div dagger perp cont}. \Cref{eq:scip div convergence} now follows from \cref{eq:app:scip u error 1} on noting that $\dive \tilde{\bdd{u}}^n = -\dive \bdd{e}^n$.
	
	Now, we use \cref{lem:app:orthog divefree tilde} to obtain
	\begin{align}
		\label{eq:app:scip u error 2}
		\|\bdd{e}^n\| \leq \tilde{\Upsilon} \|\dive \bdd{e}^n\| = \tilde{\Upsilon} \|\dive \tilde{\bdd{u}}^n\|.
	\end{align}
	Applying the inf-sup condition \cref{eq:inf-sup boundary} for $\tilde{\bdd{X}}_B^{\dagger} \times \dive \tilde{\bdd{X}}_B^{\dagger}$ and using \cref{eq:app:scip relations 1,eq:tilde dive equiv} gives
	\begin{align*}
		\tilde{\beta}_X \|r^{n}\| \leq \sup_{\bdd{0} \neq \bdd{v} \in \tilde{\bdd{X}}_B^{\dagger}} \frac{(r^n, \dive \bdd{v})}{\|\bdd{v}\|_{1}} =  \sup_{\bdd{0} \neq \bdd{v} \in \tilde{\bdd{X}}_B^{\dagger}} \frac{a_{\lambda}(\bdd{e}^n, \bdd{v}) }{|\bdd{v}|_{1}} \leq M  \| \bdd{e}^{n}\|_1 + \sqrt{d}\lambda \|\dive \bdd{e}^n\|,
	\end{align*}
	where $\tilde{\beta}_X := \alpha \beta_X (M + \alpha)^{-1}$. Thanks to \cref{eq:app:scip u error 2}, $\|r^{n}\| \leq (M \tilde{\Upsilon} + \sqrt{d} \lambda) \tilde{\beta}_X^{-1} \|\dive \tilde{\bdd{u}}^n\|$. \Cref{eq:scip convergence} now follows on collecting results.
\end{proof}

\subsection{Convergence of the Standard IP Method}
\label{sec:vanilla ip convergence}

The following result, which is an immediate consequence of \cite[eq. (13.1.16)]{Brenner08}, is the analogue of \cref{lem:app:orthog divefree tilde}:
\begin{lemma}
	\label{lem:app:orthog divefree}
	For all $\bdd{u} \in \{ \bdd{v} \in \bdd{X}_D : a(\bdd{v}, \bdd{w}) = 0 \ \forall \bdd{w} \in \bdd{X}_D : \dive \bdd{w} = 0 \}$, there holds
	\begin{align*}
		\| \bdd{u} \|_1 \leq \frac{M + \alpha}{\alpha \beta_X} \|\dive \bdd{u} \|,
	\end{align*}
	where $M > 0$ \cref{eq:a bounded}, $\alpha > 0$ \cref{eq:a elliptic}, and $\beta_X > 0$ \cref{eq:discrete inf-sup}.
\end{lemma}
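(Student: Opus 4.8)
The plan is to mirror the proof of \cref{lem:app:orthog divefree tilde}, but in this standard setting the adjoint Stokes-extension machinery is unnecessary and a direct argument suffices. Write $\bdd{Z} := \{ \bdd{w} \in \bdd{X}_D : \dive \bdd{w} = 0 \}$ for the divergence-free subspace, so that the set appearing in the statement is precisely the left $a(\cdot,\cdot)$-orthogonal complement of $\bdd{Z}$ in $\bdd{X}_D$. Fix $\bdd{u}$ in this set (if $\bdd{u} \equiv \bdd{0}$ the estimate is trivial, so assume otherwise).

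First I would invoke the discrete inf-sup condition \cref{eq:discrete inf-sup}, which furnishes the bounded right inverse $R$ of the divergence constructed in \cref{sec:scott vog}. Setting $\bdd{v} := R(\dive \bdd{u}) \in \bdd{X}_D$ gives $\dive \bdd{v} = \dive \bdd{u}$ together with the norm bound $\|\bdd{v}\|_1 \leq \beta_X^{-1}\|\dive \bdd{u}\|$. The key observation is then that $\bdd{u} - \bdd{v}$ is divergence free, hence $\bdd{u} - \bdd{v} \in \bdd{Z}$, so it is a legitimate test function in the orthogonality defining the set. Taking $\bdd{w} = \bdd{u} - \bdd{v}$ yields $a(\bdd{u}, \bdd{u} - \bdd{v}) = 0$, i.e. $a(\bdd{u}, \bdd{u}) = a(\bdd{u}, \bdd{v})$. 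Combining ellipticity \cref{eq:a elliptic}, boundedness \cref{eq:a bounded}, and the bound on $\|\bdd{v}\|_1$ gives
\[
\alpha \|\bdd{u}\|_1^2 \leq a(\bdd{u}, \bdd{u}) = a(\bdd{u}, \bdd{v}) \leq M \|\bdd{u}\|_1 \|\bdd{v}\|_1 \leq \frac{M}{\beta_X}\|\bdd{u}\|_1 \|\dive \bdd{u}\|,
\]
whence $\|\bdd{u}\|_1 \leq \frac{M}{\alpha \beta_X}\|\dive \bdd{u}\| \leq \frac{M+\alpha}{\alpha\beta_X}\|\dive\bdd{u}\|$, which is in fact slightly sharper than the stated bound. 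To reproduce the constant $\frac{M+\alpha}{\alpha\beta_X}$ verbatim (matching the $\tilde{\bdd{X}}_B$ version), one instead replaces $\bdd{v}$ by $\bdd{v} - \bdd{z}$, where $\bdd{z} \in \bdd{Z}$ solves $a(\bdd{z},\bdd{n}) = a(\bdd{v},\bdd{n})$ for all $\bdd{n}\in \bdd{Z}$ — solvable by Lax--Milgram since $a(\cdot,\cdot)$ is elliptic on $\bdd{Z} \subset \bdd{X}_D$ — then bounds $\|\bdd{v}-\bdd{z}\|_1 \leq \frac{M+\alpha}{\alpha}\|\bdd{v}\|_1$ and concludes $\bdd{u} = \bdd{v}-\bdd{z}$ by the uniqueness argument used in \cref{lem:app:orthog divefree tilde}.

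Since the whole argument is a single application of inf-sup, ellipticity, and Cauchy--Schwarz, there is no substantial obstacle; the only point requiring care is confirming that $\bdd{u} - \bdd{v}$ genuinely lies in $\bdd{Z}$ so that the defining orthogonality applies. I would emphasize that, unlike the $\tilde{\bdd{X}}_B$ setting, no adjoint extension operators $\mathbb{S}^{\dagger}$ are needed here: the orthogonality is posed against the \emph{same} subspace $\bdd{Z}$ that contains the difference $\bdd{u}-\bdd{v}$, so one may test directly rather than routing through the identities \cref{eq:app:a dagger id}.
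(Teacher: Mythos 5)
Your proposal is correct, but it takes a different route from the paper: the paper does not prove this lemma at all, instead declaring it ``an immediate consequence of \cite[eq.~(13.1.16)]{Brenner08},'' whereas you give a self-contained argument. Your first, direct argument is valid: since the orthogonality defining the set is posed against the \emph{same} space $\bdd{Z}$ of discrete divergence-free fields that contains $\bdd{u} - \bdd{v}$, testing with $\bdd{w} = \bdd{u}-\bdd{v}$ immediately yields $a(\bdd{u},\bdd{u}) = a(\bdd{u},\bdd{v})$, and combining ellipticity \cref{eq:a elliptic}, boundedness \cref{eq:a bounded}, and the inf-sup bound on $\|\bdd{v}\|_1$ gives $\|\bdd{u}\|_1 \leq M(\alpha\beta_X)^{-1}\|\dive\bdd{u}\|$, which is indeed sharper than the stated constant $(M+\alpha)(\alpha\beta_X)^{-1}$. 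Your correctly identified the key structural difference from \cref{lem:app:orthog divefree tilde}: there the orthogonality is against $\tilde{\bdd{N}}_B^{\dagger}$ while the correction lives in $\tilde{\bdd{X}}_B$, which is why the paper needs the adjoint extension operator $\mathbb{S}^{\dagger}$ and the identities \cref{eq:app:a dagger id}; none of that is needed here, and your second variant (projecting $\bdd{v}$ onto the $a$-orthogonal complement of $\bdd{Z}$ and identifying $\bdd{u} = \bdd{v}-\bdd{z}$ by uniqueness) is a faithful transcription of the paper's proof of the tilde version, recovering the constant verbatim. One small imprecision: the right inverse $R$ constructed in \cref{sec:scott vog} is only shown to satisfy $\beta_X \geq \|R\|^{-1}$, i.e.\ that construction bounds $\beta_X$ from below by $\|R\|^{-1}$; it does not supply $\|R\| \leq \beta_X^{-1}$. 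What you actually need is the standard converse consequence of \cref{eq:discrete inf-sup} — for each $q \in \dive\bdd{X}_D$ there exists $\bdd{v} \in \bdd{X}_D$ with $\dive\bdd{v} = q$ and $\|\bdd{v}\|_1 \leq \beta_X^{-1}\|q\|$ — which is exactly the step the paper itself invokes in the proof of \cref{thm:xd decomp and inf-sup}, so this does not affect the validity of your argument.
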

\noindent
With \cref{lem:app:orthog divefree} in hand, the proof of \cref{thm:vanilla ip convergence} is analogous to the proof of \cref{thm:boundary ip convergence}: the spaces $\tilde{\bdd{X}}_B$ and $\tilde{\bdd{X}}_B^{\dagger}$ are replaced by $\bdd{X}_D$; the choice $\bdd{v} = \mathbb{S}^{\dagger} \bdd{e}^{n+1}$ is replaced by $\bdd{v} = \bdd{e}^{n+1}$; the use of \cref{lem:app:orthog divefree tilde} and $\tilde{\Upsilon}$ are replaced by \cref{lem:app:orthog divefree} and $\Upsilon := (M+\alpha)/(\alpha \beta_X)$; and the inf-sup constant $\tilde{\beta}_X$ is replaced by $\beta_X$ defined in \cref{eq:discrete inf-sup}.

\appendix

\section{Properties of the 2D Scott-Vogelius Elements}
\label{sec:proof of optimal approx}

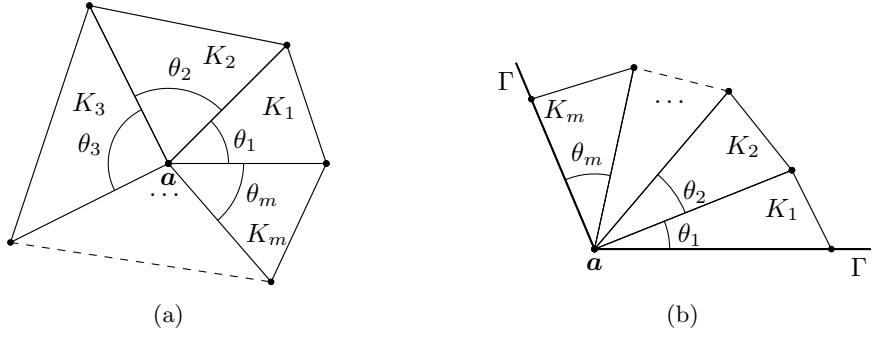
\begin{figure}[htb]
	\centering
	\begin{subfigure}[b]{0.45\linewidth}
		\centering
		\begin{tikzpicture}[scale=0.525]
			
			\coordinate (a) at (0, 0);
			\coordinate (a0) at (4, 0);
			\coordinate (a1) at (3, 3);
			\coordinate (a2) at (-2, 4);
			\coordinate (a3) at (-4, -2);
			\coordinate (a4) at (2.6, -3);				
			
			\filldraw (a) circle (2pt) node[align=center,below]{$\bdd{a}$}
			-- (a0) circle (2pt) 	
			-- (a1) circle (2pt) 
			-- (a);
			\filldraw (a) circle (2pt) node[align=center,below]{}	
			-- (a1) circle (2pt) 
			-- (a2) circle (2pt) 
			-- (a);
			\filldraw (a) circle (2pt) node[align=center,below]{}	
			-- (a2) circle (2pt) 
			-- (a3) circle (2pt) 
			-- (a);
			\filldraw (a) circle (2pt) node[align=center,below]{}	
			-- (a4) circle (2pt) 
			-- (a0) circle (2pt) 
			-- (a);
			\draw[dashed] (a3) -- (a4);
			\draw (2.8, 1.4) node(K0){$K_1$};
			\draw (1.3, 2.7) node(K1){$K_2$};
			\draw (-2, 1.5) node(K1){$K_3$};
			\draw (-0.1, -0.8) node(Kdots){$\ldots$};
			\draw (2.5, -1.8) node(Km){$K_m$};
			\pic["$\theta_1$"{anchor=west}, draw, angle radius=0.8cm, angle eccentricity=1] {angle=a0--a--a1};
			\pic["$\theta_2$"{anchor=south}, draw, angle radius=1cm, angle eccentricity=1] {angle=a1--a--a2};
			\pic["$\theta_3$"{anchor=east}, draw, angle radius=0.8cm, angle eccentricity=1] {angle=a2--a--a3};
			\pic["$\theta_m$"{anchor=west}, draw, angle radius=1cm, angle eccentricity=1] {angle=a4--a--a0};
			
		\end{tikzpicture}
		\caption{}
		\label{fig:internal schema}
	\end{subfigure}
	\hfill
	\begin{subfigure}[b]{0.5\linewidth}
		\centering
		\begin{tikzpicture}[scale=0.525]
			\filldraw (0,0) circle (2pt) node[align=center,below]{$\bdd{a}$}
			-- (6,0) circle (2pt) node[align=center,below]{}	
			-- (5,2) circle (2pt) node[align=center,above]{}
			-- (0,0);
			\filldraw (0,0) circle (2pt) node[align=center,below]{}	
			-- (5,2) circle (2pt) node[align=center,above]{}
			-- (3.4,4) circle (2pt) node[align=center,below]{}
			-- (0,0);
			\filldraw (0,0) circle (2pt) node[align=center,below]{}	
			-- (3.4,4) circle (2pt) node[align=center,above]{};
			\filldraw (1,4.6) circle (2pt) node[align=center,below]{}
			-- (0,0);
			\filldraw (0,0) circle (2pt) node[align=center,below]{}	
			-- (1,4.6) circle (2pt) node[align=center,above]{}
			-- (-1.6,3.8) circle (2pt) node[align=center,below]{}
			-- (0,0);
			
			\draw[dashed] (3.4,4) -- (1,4.6);
			
			\coordinate(a) at (0, 0);
			\coordinate(a0) at (6, 0);
			\coordinate(a1) at (5, 2);
			\coordinate(a2) at (3.4, 4);
			\coordinate(a3) at (1, 4.6);
			\coordinate(a4) at (-1.6, 3.8);
			
			\coordinate (a12) at ($(a)!2/3!(a1)$);
			\coordinate (a121) at ($(a)!2/3-1/sqrt(29)!(a1)$);
			\coordinate (a1205) at ($(a)!2/3-0.5/sqrt(29)!(a1)$);
			
			\coordinate (a22) at ($(a)!2/3!(a2)$);
			\coordinate (a221) at ($(a)!2/3-1/sqrt(27.56)!(a2)$);
			\coordinate (a2205) at ($(a)!2/3-0.5/sqrt(27.56)!(a2)$);
			
			\coordinate (a32) at ($(a)!2/3!(a3)$);
			\coordinate (a321) at ($(a)!2/3-1/sqrt(22.16)!(a3)$);
			\coordinate (a3205) at ($(a)!2/3-0.5/sqrt(22.16)!(a3)$);
			
			\coordinate (a42) at ($(a)!2/3!(a4)$);
			\coordinate (a421) at ($(a)!2/3-1/sqrt(17)!(a4)$);
			\coordinate (a4205) at ($(a)!2/3-0.5/sqrt(17)!(a4)$);

			\pic["$\theta_1$"{anchor=west}, draw, angle radius=1cm, angle eccentricity=1] {angle=a0--a--a1};
			\pic["$\theta_2$"{anchor=west}, draw, angle radius=1.3cm, angle eccentricity=1] {angle=a1--a--a2};
			\pic["$\theta_m$"{anchor=south}, draw, angle radius=1cm, angle eccentricity=1] {angle=a3--a--a4};
			
			\draw (4.75, 1) node(K0){$K_1$};
			\draw (3.75, 2.6) node(K1){$K_2$};
			\draw (1.9, 3.75) node(Kdots){$\ldots$};
			\draw (-0.75, 3.5) node(Km){$K_m$};
			
			\filldraw[thick] (0, 0) -- (7,0);
			\filldraw[thick] (0, 0) -- ($(-1.6, 3.8) + ($(a)!1/sqrt(17)!(a4)$) $);
			
			\draw (6.7, 0) node[align=center,below](G0){$\Gamma$};
			\draw ($(-1.6, 3.8) + ($(a)!0.5/sqrt(17)!(a4)$) $) node[align=center,left](Gm1){$\Gamma$};
			
		\end{tikzpicture}		
		\caption{}
		\label{fig:boundary schema}
	\end{subfigure}
	\caption{Notation for mesh around (a) an internal vertex $\bdd{a} \in \mathcal{V}_I$ and (b) a boundary vertex $\bdd{a} \in \mathcal{V}_D$, each abutting $m = |\mathcal{T}_{\bdd{a}}|$ elements.}
	\label{fig:patch schema}
\end{figure}

Finally, in this section, we turn to the the fundamental stability and approximation properties of the 2D Scott-Vogelius elements, as well as discrete exact sequence properties. One of the key conditions for optimal approximation properties is that the mesh is \textit{corner-split at Dirichlet vertices}. Roughly speaking, a mesh is corner-split if every element has at most one edge lying on $\Gamma_{D}$. In order to give a precise definition, we let $\mathcal{V}$ denote the set of element vertices, $\mathcal{V}_I$ the set of interior vertices, and $\mathcal{V}_C$ the set of element vertices coinciding with the corners of the physical domain $\Omega$. For $\bdd{a} \in \mathcal{V}_I \cup \mathcal{V}_D$, we label the elements as in \cref{fig:patch schema} and define
\begin{align}
	\label{eq:xi def}
	\xi(\bdd{a}) := \sum_{i=1}^{|\mathcal{T}_{\bdd{a}}| - \eta_{\bdd{a}}} |\sin(\theta_i + \theta_{i+1})|, \qquad \text{where } \eta_{\bdd{a}} = \begin{cases}
		1 & \bdd{a} \in \mathcal{V}_D, \\
		0 & \bdd{a} \in \mathcal{V}_I.
	\end{cases}
\end{align}
A mesh is \textit{corner-split at Dirichlet vertices} if $\{ \bdd{a} \in \mathcal{V}_C \cap \mathcal{V}_D : \xi(\bdd{a}) = 0 \} = \emptyset$.

The following result states that the 2D Scott-Vogelius elements are uniformly inf-sup stable in $h$ and $p$ and possess optimal approximation properties under mild assumptions on the mesh:
\begin{theorem}
	\label{thm:optimal approx}
	Suppose that $p \geq 4$ and that the family of meshes $\{ \mathcal{T} \}$ is corner-split at Dirichlet vertices and satisfies \cite[eq. (5.14)]{AinCP21LE}. Then, the Scott-Vogelius elements are uniformly inf-sup stable in $h$ and $p$; i.e., there exists $\beta > 0$ independent of $h$ and $p$ such that
	\begin{align}
		\label{eq:discrete inf-sup 2d}
		\beta \|q\| \leq \sup_{\bdd{0} \neq \bdd{v} \in \bdd{X}_D} \frac{(\dive \bdd{v}, q)}{\|\bdd{v}\|_{1}} \qquad \forall q \in \dive \bdd{X}_D.
	\end{align}
	Moreover, for $\bdd{u} \in \bdd{H}^s(\Omega) \cap \bdd{H}^1_D(\Omega)$ and $q \in H^{s-1}(\Omega) \cap L^2_D(\Omega)$, $s > 1$, there holds 
	\begin{align}
		\label{eq:optimal u approx}
		\inf_{ \bdd{v} \in \bdd{X}_D} \|\bdd{u} - \bdd{v}\|_{1} &\leq C h^{\min(p, s-1)} p^{-(s-1)} \|\bdd{u}\|_{s}, \\
		\label{eq:optimal q approx}
		\inf_{r \in \dive \bdd{X}_D } \|q - r\| &\leq C h^{\min(p, s-1)} p^{-(s-1)} \|q\|_{s-1},
	\end{align}
	where $C$ is independent of $\bdd{u}$, $q$, $h$, and $p$.
\end{theorem}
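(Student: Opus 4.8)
The plan is to establish the three assertions of \Cref{thm:optimal approx} --- the uniform inf-sup bound \eqref{eq:discrete inf-sup 2d} and the two approximation estimates \eqref{eq:optimal u approx}--\eqref{eq:optimal q approx} --- by setting up a \emph{discrete Stokes complex} in 2D together with a bounded, \emph{commuting} quasi-interpolation operator, and then reading off each claim from this framework. Concretely, I would exhibit a stream-function space $\Sigma$ of $C^1$ piecewise polynomials (with appropriate boundary conditions) fitting into the complex $\Sigma \xrightarrow{\vcurl} \bdd{X}_D \xrightarrow{\dive} \dive \bdd{X}_D \to 0$, so that $\dive \bdd{X}_D$ is exactly the range of $\dive$ on $\bdd{X}_D$ and its local structure is given by \eqref{eq:divxi character}. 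I would then construct a quasi-interpolant $\Pi_{\bdd{X}}\colon \bdd{H}^1_D(\Omega)\to\bdd{X}_D$, assembled from local contributions on the vertex patches $\mathcal{T}_{\bdd{a}}$ of \Cref{fig:patch schema}, satisfying $\dive \Pi_{\bdd{X}} = Q_h \dive$, where $Q_h$ is the $L^2$ projection onto $\dive \bdd{X}_D$. After an affine change of variables to a reference configuration, all constants reduce to purely local quantities on these patches.

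Granting such a bounded commuting operator, the inf-sup condition \eqref{eq:discrete inf-sup 2d} follows from the classical Fortin lemma: $\Pi_{\bdd{X}}$ is a Fortin operator since $(\dive(\bdd{v} - \Pi_{\bdd{X}}\bdd{v}), r) = (\dive \bdd{v} - Q_h \dive \bdd{v}, r) = 0$ for all $r \in \dive \bdd{X}_D$, and combining its $\bdd{H}^1$-boundedness with the continuous inf-sup condition for $\bdd{H}^1_D(\Omega)\times L^2_D(\Omega)$ gives a discrete constant $\beta$ that inherits the $h$- and $p$-uniformity of $\|\Pi_{\bdd{X}}\|$. The substantive point is thus the uniform bound on $\Pi_{\bdd{X}}$, which by the local assembly amounts to bounding, uniformly in $p$, the norm of a local right inverse of $\dive$ on each patch. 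It is precisely here that the singular-vertex structure enters: at a (near-)singular vertex the local divergence constraint in \eqref{eq:divxi character} degenerates, and the conditioning of the local problem is governed by the geometric quantity $\xi(\bdd{a})$ of \eqref{eq:xi def}. The hypotheses that the mesh be corner-split at Dirichlet vertices and satisfy \cite[eq.\ (5.14)]{AinCP21LE} guarantee that $\xi(\bdd{a})$ is bounded away from $0$ uniformly over the mesh family, which delivers the $p$-robust lower bound.

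The velocity estimate \eqref{eq:optimal u approx} is the classical part: since $\bdd{X}_D = [X_D]^2$ consists of standard continuous Lagrange elements, the rate $h^{\min(p,s-1)} p^{-(s-1)}$ for $\bdd{u} \in \bdd{H}^s(\Omega)$ is a direct consequence of $hp$-approximation theory (a Scott--Zhang or Babu\v{s}ka--Suri quasi-interpolant suffices), and no singular-vertex input is required. For the pressure estimate \eqref{eq:optimal q approx} I would again exploit the commuting structure. Since the best approximation equals $\|q - Q_h q\|$, I would bound it by the best approximation of $q$ in the \emph{broken} space $\{ r : r|_K \in \mathcal{P}_{p-1}(K) \}$ together with a correction enforcing the vertex constraints of \eqref{eq:divxi character}: the elementwise $L^2$ projection supplies the optimal unconstrained rate, while the correction is controlled by the same singular-vertex stability (hence by $\xi(\bdd{a})$) used for the inf-sup bound, via a continuity-at-vertices argument showing that the constraint functionals nearly annihilate $q$ itself.

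The main obstacle throughout is the $p$-robustness of the local analysis at singular and near-singular vertices. Establishing that the local divergence right-inverse and the constraint-correction operators remain bounded as $p \to \infty$ whenever $\xi(\bdd{a}) \neq 0$ is the crux of the entire theorem, and is exactly the point at which the historically expected algebraic degradation of the Scott--Vogelius inf-sup constant in $p$ must be ruled out; everything else (the Fortin reduction, the continuous inf-sup, and the Lagrange approximation theory) is standard once this uniform local estimate is in hand.
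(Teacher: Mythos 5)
There is a genuine gap, and it sits exactly where you locate the ``crux.'' Your plan rests on assembling a Fortin operator from patchwise right inverses of $\dive$ whose norms are controlled because, as you claim, the hypotheses force $\xi(\bdd{a})$ to be bounded away from zero uniformly over the mesh family. That claim is false. The corner-split condition only requires $\xi(\bdd{a}) \neq 0$ at vertices in $\mathcal{V}_C \cap \mathcal{V}_D$, i.e.\ at Dirichlet \emph{corner} vertices; exactly singular vertices with $\xi(\bdd{a}) = 0$ are permitted in the interior and at non-corner Dirichlet vertices. Indeed, the criss-cross meshes used in \cref{sec:numerics} contain an interior singular vertex in every macro-cell, and the alternating-sum constraints in the definition of $Q_D$ in \cref{lem:exact sequence global} exist precisely to accommodate such vertices. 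At a singular vertex the local divergence operator onto the \emph{unconstrained} local pressure space admits no right inverse at all (the dimension of $\dive \bdd{X}_D$ drops there), so a patchwise right inverse with norm of order $1/\xi(\bdd{a})$ simply does not exist; any Fortin-type construction must target the constrained space $Q_D$ instead. The role of \cite[eq.\ (5.14)]{AinCP21LE} is to exclude \emph{near}-singular vertices (forcing $\xi(\bdd{a})$ to be either exactly $0$ or uniformly bounded below), not to remove singular ones. Moreover, even granting the correct formulation, the $p$-uniformity of the local analysis --- which you rightly call the crux --- is asserted in your proposal, not proven; your sketch therefore reproves nothing beyond what it assumes.

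For comparison, the paper does not construct a Fortin operator at all: the inf-sup bound \cref{eq:discrete inf-sup 2d} is quoted directly from \cite[Theorem 5.1]{AinCP21LE}, and the genuinely new content is the pressure estimate \cref{eq:optimal q approx}, obtained by a subspace inclusion rather than by projection plus correction. Writing $\tilde{Q}_D$ for the piecewise $\mathcal{P}_{p-1}$ functions in $L^2_D(\Omega)$ that are continuous at noncorner vertices, the optimal rate for $\tilde{Q}_D$ is known from \cite[Theorem 2.1]{AinCP19StokesII}; the corner-split hypothesis implies that every singular vertex abuts an \emph{even} number of elements, so for $r$ continuous at such a vertex $\bdd{a}$ the alternating sum $\sum_{i=1}^{|\mathcal{T}_{\bdd{a}}|} (-1)^i r|_{K_i}(\bdd{a})$ vanishes automatically, whence $\tilde{Q}_D \subset Q_D = \dive \bdd{X}_D$ by \cref{lem:exact sequence global}, and \cref{eq:optimal q approx} follows. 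This parity argument sidesteps your correction step, which is also problematic on its own terms: for $q \in H^{s-1}(\Omega)$ with $s-1 \leq 1$ (allowed, since the theorem only assumes $s > 1$) the vertex values of $q$ that your ``constraint functionals nearly annihilate $q$'' argument requires do not exist. The velocity estimate \cref{eq:optimal u approx} is standard $hp$-approximation theory in both treatments, and is the only part of your proposal that goes through as written.
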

The conditions needed in \cref{thm:optimal approx} are quite standard, apart from the requirement that the mesh be corner-split at Dirichlet vertices. We refer to \cite[p. 35]{AinCP21LE} for a detailed characterization of the remaining mesh conditions in \cref{thm:optimal approx} and assume they hold for the remainder of this paper. Although some progress on barycenter-refined meshes \cite{Zhang05} and uniform tetrahedral grids \cite{Zhang11} have been made for the 3D Scott-Vogelius elements, their stability, approximation, and exact sequence properties remain open.

The proof of \cref{thm:optimal approx} is given in \cref{sec:exact sequence}. The inf-sup condition \cref{eq:discrete inf-sup} with $\beta$ replaced by $C p^{-K}$ for $p$ and $K > 0$ sufficiently large, was shown in \cite{Vogelius83divinv} -- the restriction on the polynomial degree was subsequently relaxed to $p \geq 4$ in \cite{ScottVog85}. Here, we show that the elements are uniformly stable in both $h$ and $p$. Even though (optimal) approximation properties of the space $\bdd{X}_D$ expressed in \cref{eq:optimal u approx} are a consequence of standard approximation theory for $hp$-finite elements \cite{Schwab98}, the result \cref{eq:optimal q approx} on the optimal approximability of the space $\dive \bdd{X}_D$ is also new, although the result was known for the pure traction problem ($|\Gamma_D| = 0$) on a fixed mesh \cite[Lemma 3.3]{Vogelius83le}. Only one other conforming finite element discretization on (again corner-split) triangular meshes is known to be uniformly inf-sup stable in $h$ and $p$ and possess optimal approximation properties \cite{AinCP19StokesI,AinCP19StokesII}.

\subsection{Exact Sequence Properties}
\label{sec:exact sequence}

Let $\{ \Gamma_{D,j} \}_{j=1}^{J}$ denote the connected components of $\Gamma_D$ and define
\begin{align*}
	H^2_D(\Omega) &:= \{ \psi \in H^2(\Omega) : \psi|_{\Gamma_{D,1}} = 0, \ \text{$\psi|_{\Gamma_{D,j}}$ is constant, $2 \leq j \leq J$, $\partial_n \psi|_{\Gamma_D} = 0$} \}.
\end{align*}
It is not difficult to see that $\vcurl H^2_D(\Omega) \subset \bdd{H}^1_D(\Omega)$ and $\dive \bdd{H}^1_D(\Omega) \subseteq L^2_D(\Omega)$, where $\vcurl \phi = (\partial_y \phi, -\partial_x \phi)^T$. In fact, the following sequence is exact \cite[Lemma 4.6.1]{Parker22} in the sense that the kernel of each operator appearing in \cref{eq:exact sequence continuous level} equals the range of the previous operator in the sequence:
\begin{align}
	\label{eq:exact sequence continuous level}
	0 \xrightarrow{ \ \ \ \subset \ \ \ } H^2_D(\Omega) \xrightarrow{\ \ \vcurl \ \ } \bdd{H}^1_D(\Omega) \xrightarrow{ \ \ \dive \ \ } L^2_D(\Omega) \xrightarrow{ \ \ \ 0 \ \ \ } 0.
\end{align}
For instance, if $\bdd{u} \in \bdd{H}^1_D(\Omega)$ is the velocity in \cref{eq:stokes continuous} so that $\dive \bdd{u} \equiv 0$, then there exists a potential $\phi \in H^2_D(\Omega)$ such that $\bdd{u} = \vcurl \phi$.

We will show that the Scott-Vogelius finite element spaces $\bdd{X}_D$ and $\dive \bdd{X}_D$ also form part of an exact sequence. To this end, define a discrete potential space by
\begin{align*}
	\Sigma_D = \Sigma \cap H^2_D(\Omega), \qquad \text{where} \quad 
	\Sigma := \{ \psi \in C^1(\bar{\Omega}) : \psi|_{K} \in \mathcal{P}_{p+1}(K) \ \forall K \in \mathcal{T} \}.
\end{align*}
As shown in \cite{AinCP21LE,ScottVog85,Vogelius83divinv}, the space $\dive \bdd{X}_D$ satisfies a constraint at certain element vertices, which may be summarized as follows. Let $\mathcal{V}_D$ denote the set of element vertices lying on the interior of $\Gamma_D$ and $\mathcal{V}_{DN}$ denote the vertices coinciding with the intersection of $\bar{\Gamma}_D$ and $\bar{\Gamma}_N$. Additionally, given $\bdd{a} \in \mathcal{V}$, let  $\mathcal{T}_{\bdd{a}}$ denote the set of elements sharing $\bdd{a}$ as a vertex, labeled as in \cref{fig:patch schema}. Then, we have the following result:
\begin{lemma}
	\label{lem:exact sequence global}
	Let $\xi(\cdot)$ be defined as in \cref{eq:xi def} and define 
	\begin{align*}
		Q &:= \bigg\{ q \in L^2(\Omega) : q|_{K} \in \mathcal{P}_{p-1}(K) \ \forall K \in \mathcal{T}, \\ &\qquad \qquad \sum_{i=1}^{|\mathcal{T}_{\bdd{a}}|} (-1)^i q|_{K_i}(\bdd{a}) = 0 \ \forall \bdd{a} \in \mathcal{V}_I : \xi(\bdd{a}) = 0 \bigg\} \notag \\
		\intertext{and}
		Q_D &:= \bigg\{ q \in Q \cap L^2_D(\Omega) : \sum_{i=1}^{|\mathcal{T}_{\bdd{a}}|} (-1)^i q|_{K_i}(\bdd{a}) = 0 \ \forall \bdd{a} \in \mathcal{V}_D : \xi(\bdd{a}) = 0 \bigg\},
	\end{align*}
	where the elements in $\mathcal{T}_{\bdd{a}}$, $\bdd{a} \in \mathcal{V}$, are labeled as in \cref{fig:patch schema}. Then, the sequence
	\begin{align}
		\label{eq:exact sequence fem level}
		0 \xrightarrow{ \ \ \ \subset \ \ \ } \Sigma_D \xrightarrow{\ \ \vcurl \ \ } \bdd{X}_D \xrightarrow{ \ \ \dive \ \ } Q_D \xrightarrow{ \ \ \ 0 \ \ \ } 0
	\end{align}
	is exact.
\end{lemma}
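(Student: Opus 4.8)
The plan is to verify exactness at each of the three nontrivial spots $\Sigma_D$, $\bdd{X}_D$, and $Q_D$ in \cref{eq:exact sequence fem level}, leaning on the continuous exact sequence \cref{eq:exact sequence continuous level} for the ``de~Rham'' part and on the local structure of singular vertices for the divergence constraints. First I would check that $\vcurl$ maps $\Sigma_D$ into $\bdd{X}_D$: since $\psi \in \Sigma_D$ is $C^1$ and piecewise $\mathcal{P}_{p+1}$, the field $\vcurl \psi = (\partial_y\psi, -\partial_x\psi)$ is continuous and piecewise $\mathcal{P}_p$, and the conditions $\psi|_{\Gamma_{D,1}} = 0$, $\psi|_{\Gamma_{D,j}}$ constant, and $\partial_n\psi|_{\Gamma_D}=0$ force both the tangential and normal derivatives of $\psi$ to vanish on $\Gamma_D$, hence $\vcurl\psi|_{\Gamma_D} = \bdd{0}$. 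Exactness at $\Sigma_D$ then reduces to injectivity of $\vcurl$: if $\vcurl\psi = \bdd{0}$ then $\nabla\psi \equiv \bdd{0}$, so $\psi$ is constant on the connected domain $\Omega$, and $\psi|_{\Gamma_{D,1}}=0$ forces $\psi\equiv 0$.

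For exactness at $\bdd{X}_D$ I would show $\ker(\dive|_{\bdd{X}_D}) = \vcurl\Sigma_D$. The inclusion $\supseteq$ is immediate from $\dive\vcurl\psi = \partial_{xy}\psi - \partial_{yx}\psi = 0$. For $\subseteq$, take $\bdd{v}\in\bdd{X}_D$ with $\dive\bdd{v}=0$; since $\bdd{X}_D \subset \bdd{H}^1_D(\Omega)$, the continuous sequence \cref{eq:exact sequence continuous level} supplies $\phi\in H^2_D(\Omega)$ with $\vcurl\phi = \bdd{v}$. It then remains to argue $\phi\in\Sigma_D$: the identity $\nabla\phi = (-v_2, v_1)$ shows $\nabla\phi$ is piecewise $\mathcal{P}_p$, whence $\phi$ is piecewise $\mathcal{P}_{p+1}$, and a piecewise polynomial lying in $H^2(\Omega)$ on a triangulation is automatically $C^1(\bar\Omega)$. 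Thus $\phi\in\Sigma\cap H^2_D(\Omega)=\Sigma_D$, completing this spot.

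The remaining and most substantial spot is exactness at $Q_D$, i.e.\ $\dive\bdd{X}_D = Q_D$. I would first establish the inclusion $\dive\bdd{X}_D\subseteq Q_D$. That $\dive\bdd{v}$ is piecewise $\mathcal{P}_{p-1}$ is clear, and when $\Gamma_D=\Gamma$ the divergence theorem gives $\int_\Omega\dive\bdd{v}=0$, so $\dive\bdd{v}\in L^2_D(\Omega)$. The vertex conditions $\sum_{i}(-1)^i\,\dive\bdd{v}|_{K_i}(\bdd{a})=0$ at singular vertices (those with $\xi(\bdd{a})=0$, where consecutive edges align, $\theta_i+\theta_{i+1}=\pi$) are a purely local consequence of the continuity of $\bdd{v}$ at $\bdd{a}$: expanding $\dive\bdd{v}|_{K_i}$ at $\bdd{a}$ in terms of the edge-tangent derivatives and using the collinearity of the edges yields the alternating-sum identity. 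This is the classical Scott--Vogelius vertex constraint \cite{ScottVog85,Vogelius83divinv,AinCP21LE}, which I would verify by this local calculation.

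The hardest step is the reverse inclusion $Q_D\subseteq\dive\bdd{X}_D$ (surjectivity). Rather than constructing a preimage directly, I would close the argument by a dimension count. The first two spots give, via rank--nullity, $\dim\dive\bdd{X}_D = \dim\bdd{X}_D - \dim\ker(\dive|_{\bdd{X}_D}) = \dim\bdd{X}_D - \dim\vcurl\Sigma_D = \dim\bdd{X}_D - \dim\Sigma_D$, the last equality using the injectivity established above. It therefore suffices to show $\dim Q_D = \dim\bdd{X}_D - \dim\Sigma_D$, since combined with $\dive\bdd{X}_D\subseteq Q_D$ this forces equality. I would compute all three dimensions from local degree-of-freedom counts together with Euler's relation for the triangulation, being careful to subtract one constraint per singular interior and Dirichlet vertex in the count for $\dim Q_D$ and to account for the boundary conditions defining $\Sigma_D$ and $\bdd{X}_D$. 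I expect this bookkeeping---in particular correctly enumerating the singular vertices and confirming that the constraints imposed in $Q_D$ are exactly those produced by $\dive\bdd{X}_D$---to be the main obstacle, since it is precisely where the mesh topology (through $\xi(\cdot)$) enters and where the classical Scott--Vogelius analysis \cite{ScottVog85,Vogelius83divinv,AinCP21LE} is needed.
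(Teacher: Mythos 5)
Your architecture is sound and, at bottom, the same as the paper's: everything hinges on the dimension identity $\dim\Sigma_D + \dim Q_D = \dim\bdd{X}_D$ combined with rank--nullity and elementary inclusions. The genuine differences are these. You prove exactness at $\bdd{X}_D$ directly, pulling a potential $\phi\in H^2_D(\Omega)$ from the continuous sequence \cref{eq:exact sequence continuous level} and observing that a piecewise polynomial lying in $H^2(\Omega)$ is automatically $C^1$, hence $\phi\in\Sigma_D$; the paper instead obtains this spot from the dimension count. Conversely, you use the dimension count to get the surjectivity $\dive\bdd{X}_D = Q_D$, whereas the paper simply cites it from \cite[Theorem 4.1]{AinCP21LE} and handles the case $|\Gamma_D|=|\Gamma|$ separately via \cite[Proposition 3.2]{ScottVog85}. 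Both divisions of labor are legitimate; indeed, once one has injectivity of $\vcurl$ on $\Sigma_D$, the inclusions $\vcurl\Sigma_D\subseteq\ker(\dive|_{\bdd{X}_D})$ and $\dive\bdd{X}_D\subseteq Q_D$, and the dimension identity, exactness at both interior spots follows simultaneously by rank--nullity, so your direct argument at $\bdd{X}_D$ is an attractive bonus rather than a necessity.

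The gap is that the dimension identity itself, which you defer as bookkeeping, is the entire technical content of the lemma, and it cannot be obtained by local degree-of-freedom counts plus Euler's relation alone. The obstruction is $\dim\Sigma_D$: the dimension of a $C^1$ piecewise-polynomial space is notoriously delicate because the smoothness constraints at vertices are not independent, and their rank depends on the mesh geometry through precisely the singular vertices $\xi(\bdd{a})=0$ (this is why $Q$ carries those alternating-sum constraints in the first place); the known formula goes back to Morgan and Scott and requires degree $p+1\geq 5$. The paper does not re-derive it: it invokes \cite{MorganScott75} and \cite[Lemma 4.6.2]{Parker22} for $\dim\Sigma_D$, imports the identity $\dim\Sigma + \dim Q - \dim\bdd{X} = 1$ from \cite[Lemma 6.1]{AinCP21LE}, and then closes the computation with the boundary combinatorics $|\mathcal{E}_D| = |\mathcal{V}_D| + |\mathcal{V}_{DN}| - J$ and $|\mathcal{V}_{DN}| = 2J$ to conclude $\dim\Sigma_D + \dim Q_D - \dim\bdd{X}_D = 0$. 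Your plan would also need the (easy) independence of the singular-vertex constraints defining $Q_D$ and a separate treatment of the pure Dirichlet case, where $L^2_D = L^2_0$ changes the count. Until the computation of $\dim\Sigma_D$ is supplied, by citation or by reproducing the Morgan--Scott argument, the proof is incomplete at exactly its crux.
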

\begin{proof}
	In the case $|\Gamma_{D}| = |\Gamma|$, \cref{eq:exact sequence fem level} follows from the proof of Proposition 3.2 in \cite{ScottVog85}, so we assume that $|\Gamma_D| < |\Gamma|$. The inclusion $\vcurl \Sigma_D \subset \bdd{X}_D$ follows by definition, while $\dive \bdd{X}_D = Q_D$ by \cite[Theorem 4.1]{AinCP21LE}. We may argue as in \cite{MorganScott75} and the proof of \cite[Lemma 4.6.2]{Parker22} to show that
	\begin{align*}
		\dim \Sigma_D = \dim \Sigma - 7|\mathcal{V}_D| - 5|\mathcal{V}_{DN}| - (2p-7)|\mathcal{E}_D| + |\{ \bdd{a} \in \mathcal{V}_D : \xi(\bdd{a}) = 0 \}| + J - 1,
	\end{align*}
	where $\mathcal{E}_D$ is the set of element edges lying on $\Gamma_D$. Counting the constraints on the space $\bdd{X}_D$ and $Q_D$ gives
	\begin{align*}
		\dim \bdd{X}_D &= \dim \bdd{X} - 2\left\{ |\mathcal{V}_D| + |\mathcal{V}_{DN}| + (p-1)|\mathcal{E}_D| \right\} \\
		\dim Q_D &= \dim Q - |\{ \bdd{a} \in \mathcal{V}_D : \xi(\bdd{a}) = 0 \}|.
	\end{align*}
	By \cite[Lemma 6.1]{AinCP21LE}, $\dim \Sigma + \dim Q - \dim \bdd{X} = 1$, and so
	\begin{align*}
		\dim \Sigma_D + \dim Q_D - \dim \bdd{X}_D = J - 5|\mathcal{V}_D| - 3|\mathcal{V}_{DN}| + 5|\mathcal{E}_D|.
	\end{align*}
	Moreover,
	\begin{align*}
		|\mathcal{E}_D| = \sum_{j=1}^{J} |\mathcal{E}_D \cap \Gamma_{D,j}| = \sum_{j=1}^{J} \left\{ |(\mathcal{V}_D \cup \mathcal{V}_{DN}) \cap \bar{\Gamma}_{D,j}| - 1\right\} = |\mathcal{V}_D| + |\mathcal{V}_{DN}| - J,
	\end{align*}
	where we used Euler's identity on each connected component $\Gamma_{D,j}$: $|\mathcal{E}_D \cap \Gamma_{D,j}| = |(\mathcal{V}_D \cup \mathcal{V}_{DN})  \cap \bar{\Gamma}_{D,j}| - 1$. Additionally, the endpoints of each connected component $\Gamma_{D,j}$ consist of two unique vertices in $\mathcal{V}_{DN}$, and so $|\mathcal{V}_{DN}| = 2J$. Collecting results, we have $\dim \Sigma_D + \dim Q_D - \dim \bdd{X}_D = 0.$
	The exactness of \cref{eq:exact sequence fem level} now follows using standard arguments (see e.g. \cite[Proposition 3.1]{ScottVog85} or \cite[Lemma 6.1]{AinCP21LE}).
\end{proof}

\subsection{Stability and Approximation}
With an explicit characterization $Q_D =\dive \bdd{X}_D$ thanks to \cref{eq:exact sequence fem level} in hand, we now prove \cref{thm:optimal approx}. 

\begin{proof}[Proof of \cref{thm:optimal approx}]
	\Cref{eq:discrete inf-sup 2d} is an immediate consequence of \cite[Theorem 5.1]{AinCP21LE}. Let $\tilde{Q}_D := \{ r \in L^2_D(\Omega) : r|_{K} \in \mathcal{P}_{p-1}(K) \ \forall K \in \mathcal{T} \text{ $r$ is continuous at}$ 
	$\text{noncorner vertices} \}$.
	
	By \cite[Theorem 2.1]{AinCP19StokesII}, there holds
	\begin{align}
		\label{eq:tildeq optimal approx}
		\inf_{r \in \tilde{Q}_D} \|q - r\| \leq C h^{\min(p, s-1)} p^{-(s-1)} \|q\|_{s-1},
	\end{align}
	where $C$ is independent of $h$ and $p$ in the case $L^2_D(\Omega) = L^2_0(\Omega)$. Exactly the same construction in \cite[Lemmas 4.2 \& 4.3]{AinCP19StokesII} shows that \cref{eq:tildeq optimal approx} also holds in the case $L^2_D(\Omega) = L^2(\Omega)$. Since the mesh is corner-split at Dirichlet vertices, the set $	\{  \bdd{a} \in \mathcal{V}_I \cup \mathcal{V}_D : \xi(\bdd{a}) = 0 \}$
	consists of element vertices abutting an even number of elements; i.e. $|\mathcal{T}_{\bdd{a}}|$ is even (see e.g. section 4.3 of \cite{AinCP21LE}). As a result, for $r \in \tilde{Q}_D$, the condition
	\begin{align*}
		\sum_{i=1}^{|\mathcal{T}_{\bdd{a}}|} (-1)^{i} r|_{K_i}(\bdd{a}) = 0 \ \forall \bdd{a} \in \mathcal{V}_I \cup \mathcal{V}_D : \xi(\bdd{a}) = 0
	\end{align*}
	is automatically satisfied since $q$ is continuous at noncorner vertices. Consequently, $\tilde{Q}_D \subset Q_D = \dive \bdd{X}_D$, and so \cref{eq:optimal q approx} follows from \cref{eq:tildeq optimal approx}. \Cref{eq:optimal u approx} is a consequence of standard approximation theory for $hp$-finite elements; see e.g. \cite{Schwab98}.
\end{proof}

\newpage

\bibliographystyle{siamplain}
\bibliography{references}

\end{document}